\documentclass[11pt,reqno]{amsart}

\usepackage[T1]{fontenc}
\usepackage{lmodern}
\usepackage{microtype}
\usepackage{xcolor}
\usepackage[margin=0.93in]{geometry}
\usepackage{amsmath,amssymb,mathtools,mathrsfs}
\usepackage{enumitem}
\usepackage[hidelinks]{hyperref}
\usepackage[nameinlink,capitalise,noabbrev]{cleveref}

\allowdisplaybreaks

\newtheorem{theorem}{Theorem}[section]
\newtheorem{proposition}[theorem]{Proposition}
\newtheorem{fact}[theorem]{Fact}
\newtheorem{lemma}[theorem]{Lemma}
\newtheorem{corollary}[theorem]{Corollary}
\newtheorem{claim}[theorem]{Claim}
\theoremstyle{definition}

\newtheorem{remark}[theorem]{Remark}

\numberwithin{equation}{section}

\newcommand{\K}{\mathcal K}

\newcommand{\B}{\mathbb B}
\newcommand{\one}{\mathbf 1}
\newcommand{\supp}{\operatorname{supp}}
\newcommand{\dist}{\operatorname{dist}}
\newcommand{\diam}{\operatorname{diam}}
\newcommand{\NCap}{\operatorname{Cap}}
\newcommand{\LE}{\operatorname{LE}}
\newcommand{\tr}{\operatorname{tr}}

\newcommand{\dd}{\,\mathrm d}
\newcommand{\eps}{\varepsilon}
\newcommand{\Z}{\mathbb Z}
\newcommand{\Q}{\mathbb Q}
\newcommand{\R}{\mathbb R}
\newcommand{\Hd}{d_{\mathrm H}}
\newcommand{\cH}{\mathcal H}

\crefname{claim}{Claim}{Claims}
\Crefname{claim}{Claim}{Claims}

\title[Three-dimensional LERW and loop-catchers]
{Characterization and scaling limit of three-dimensional \\loop-erased random walk
and random-walk loop-catchers}
\author{Gefei Cai}
\address{Beijing International Center for Mathematical Research,
Peking University}
\email{caigefei1917@pku.edu.cn}

\author{Xinyi Li}
\address{Beijing International Center for Mathematical Research,
Peking University}
\email{xinyili@bicmr.pku.edu.cn}

\author{Daisuke Shiraishi}
\address{Graduate School of Informatics,
Kyoto University}
\email{shiraishi@acs.i.kyoto-u.ac.jp}

\date{\today}
\date{\today}

\begin{document}

\begin{abstract}

Random-walk loop-catchers (RWLC) are a one-parameter family of random subsets of the trace of a random walk, recently introduced in \cite{Cai26}, which interpolates between loop-erased random walk (LERW) and the full walk trace, satisfying a recovery property.
We prove that the continuous version of this recovery property uniquely characterizes the scaling limit of LERW and RWLC in three dimensions, using a Green-function test via entangled multipath LERW, as outlined in \cite{Cai26}. 
For LERW, this resolves the 3D case of the conjecture of Sapozhnikov--Shiraishi \cite{SS18} and gives a new and axiomatic proof of the existence of the scaling limit, which was first proved in \cite{Kozma}. For RWLC, this gives the construction of 3D Brownian loop-catchers. The characterization also yields inversion invariance of the 3D LERW scaling limit.

\end{abstract}

\maketitle
\enlargethispage{3pt}

\setcounter{tocdepth}{1}
\tableofcontents

\section{Introduction and main results}\label{sec:intro}

\subsection*{Background and motivation}
Loop-erased random walk (LERW), introduced by Lawler~\cite{Lawler80},
is obtained by chronologically erasing the loops of a random walk.
LERW also describes branches of uniform spanning trees, and
Wilson's algorithm~\cite{Wilson96} constructs the tree by successively
adding loop-erased walks. 

In two dimensions, Schramm~\cite{Schramm00} showed how conformal
invariance and the domain Markov property lead to a Loewner description
of the limiting curve. Lawler--Schramm--Werner~\cite{LSW04} subsequently
proved convergence of planar LERW to \(\mathrm{SLE}_2\). In dimensions \(d\ge4\), the scaling limit is Brownian motion,
with a logarithmic correction to the time scaling in dimension four
\cite{Lawler80,Lawler86,Lawler95}.
In three dimensions, Kozma~\cite{Kozma} constructed the scaling limit
as a random compact set and established its invariance under rotations
and dilations.
Shiraishi~\cite{ShiraishiGrowth} proved the existence of the
growth exponent of three-dimensional LERW.
Li--Shiraishi~\cite{LiShiraishi} later proved convergence
in the natural parametrization, with renormalized discrete length
converging to a measure on the limiting curve.
These results establish the continuum object and its natural time
scale. A further question is how to characterize its law using
continuum data alone.

Brownian motion is a natural source of such data, since the walk before
loop erasure converges to Brownian motion. However, convergence of the
underlying paths does not by itself pass chronological loop erasure to
the limit: erasure depends on exact intersections, which are unstable
under uniform approximation. 
Brownian loop soup, introduced in the plane by
Lawler--Werner~\cite{LawlerWerner}, provides a collection of loops that
can be sampled independently of a candidate curve. Its discrete
counterpart relates a random walk to its loop erasure through independent
loop addition; see \cite{Lawler18}.

Sapozhnikov--Shiraishi~\cite{SS18} made this connection precise in
dimensions two and three. In the radial unit-ball setting, they proved
that every subsequential LERW limit is a simple curve and that its union
with the loops of an independent intensity-one Brownian loop soup
intersecting it has, after closure, the law of a stopped Brownian range.
Their Conjecture~1.3 asks whether this recovery property uniquely
determines the curve law. A positive answer gives an intrinsic
description of continuum LERW in terms of Brownian motion and Brownian
loop measure. It also separates the identification of a subsequential
limit from the discrete estimates needed to construct it.

Cai~\cite{Cai26} introduced random-walk loop-catchers (RWLC) on finite graphs
and constructed their planar Brownian limits. With the parameter written
as the soup intensity \(\lambda\in[0,1]\), the finite models range from
the random-walk trace at \(\lambda=0\) to LERW at \(\lambda=1\).
In that paper, the finite recovery property (see \eqref{eq:blc-discrete-recovery-transform} in this work) uses a single independent soup whose loops
meeting the catcher restore the original walk range in law.
The entangled multipath LERW construction in that work yields a Green-function test: random probes convert recovery identities such as \eqref{eq:recovery-transform} into mixed
moments of killed Green functions. This proves uniqueness of the full
planar loop-catcher law, and in particular implies the two-dimensional version of~\cite[Conjecture 1.3]{SS18}.

We carry out the three-dimensional extension outlined in
\cite[Section~1.5]{Cai26}, proving the required hittability estimates
and passing the finite recovery identities
\eqref{eq:blc-discrete-recovery-transform} and
\eqref{eq:blc-discrete-probe-transform} to the limit.
In particular, we resolve Conjecture~1.3 of
Sapozhnikov--Shiraishi~\cite{SS18} in three dimensions and construct
the three-dimensional Brownian loop-catchers as the scaling limit of RWLC.

\subsection*{Main results}

Fix a bounded connected Lipschitz domain \(B\subset\R^3\)
containing \(0\). 

Let \(\Gamma\) be the space of
{\it unparametrized curves} \(K\subset\overline B\) from \(0\) to
\(\partial B\), with no other point on \(\partial B\). We give \(\Gamma\)
the Borel structure inherited from the Hausdorff metric on the space of nonempty compact
subsets of \(\overline B\). For \(\lambda\ge0\), let
\(\mathcal L_B^\lambda\) be a Brownian loop soup of intensity \(\lambda\),
independent of \(K\), and define its direct loop decoration by
\begin{equation}\label{eq:decoration}
 \mathcal D_\lambda(K)
 :=\overline{K\cup\bigcup_{\ell\in\mathcal L_B^\lambda:\,
                           \ell\cap K\ne\varnothing}\ell}.
\end{equation}
The closure is taken in \(\overline B\), and the soup is empty at
\(\lambda=0\). A probability measure \(\mathfrak m\) on \(\Gamma\)
satisfies the \emph{intensity-one recovery property} if, for \(K\sim\mathfrak m\),
\begin{equation}\label{eq:lerw-recovery}
 \mathcal D_1(K)\stackrel d=W[0,\tau_B],
\end{equation}
where \(W\) is Brownian motion started at \(0\) and \(\tau_B\) is its
first exit time from \(B\).

\begin{theorem}
\label{thm:main}
There is a unique probability measure \(\mathfrak m\) on \(\Gamma\) that
satisfies the (intensity-one) recovery property.  It is the scaling-limit law, in the
unparametrized Hausdorff trace topology, of three-dimensional loop-erased
random walk from \(0\) to \(\partial B\).
\end{theorem}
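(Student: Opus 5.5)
The proof has two halves: existence and uniqueness. Convergence of LERW then follows from tightness, since every subsequential limit satisfies the recovery property and uniqueness identifies the limit.

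\emph{Existence.} Let \(\eta^\delta\) be the loop-erasure of simple random walk on \(\delta\Z^3\) from \(0\) stopped on exiting \(B\). Tightness of \(\eta^\delta\) in the Hausdorff topology follows from Kozma's estimates, or from the quasi-loop bounds of \cite{SS18}, which hold on Lipschitz domains after local modification near \(\partial B\). Any subsequential limit \(K\) is a simple curve from \(0\) to \(\partial B\), so \(K\in\Gamma\). For the recovery identity I start from the finite statement \eqref{eq:blc-discrete-recovery-transform} at \(\lambda=1\): adding the loops of an independent random-walk loop soup that meet \(\eta^\delta\) reproduces the walk range exactly in law. I then couple the random-walk loop soup with the Brownian loop soup (Lawler--Trujillo Ferreras) and pass to the limit. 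The direction ``limit of decorations \(\supseteq\) decoration of the limit'' follows from convergence of loops. The converse needs a hittability estimate. If a macroscopic Brownian loop comes within \(\eps\) of \(K\), it must actually intersect \(K\) with probability \(1-o(1)\); symmetrically, a discrete loop meeting \(\eta^\delta\) must be close to a Brownian loop meeting \(K\). The needed estimate is that a Brownian path started near \(K\) hits \(K\) with high probability, uniformly over scales. It follows from the quasi-loop-free and hittability estimates for 3D LERW: Sapozhnikov--Shiraishi's bound that random walk near LERW hits it with probability \(1-O(\eps^c)\). Microscopic loops contribute nothing after closure. Together these give \(\mathcal D_1(K)\stackrel d=W[0,\tau_B]\), since the walk range converges to the Brownian range.

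\emph{Uniqueness.} I follow the Green-function test of \cite{Cai26}. Fix probe points \(z_1,\dots,z_n\in B\) and small balls around them. The recovery identity \eqref{eq:recovery-transform} expresses, for any \(\mathfrak m\) satisfying \eqref{eq:lerw-recovery}, the joint law of the events ``\(\mathcal D_1(K)\) hits the balls'' in terms of Brownian quantities. Conversely, the inclusion--exclusion over the loop soup is \(\exp(-\mu^{\mathrm{loop}}(\text{loops meeting } K \text{ and the balls}))\). The entangled multipath LERW construction then inverts this relation. Through the probe transform \eqref{eq:blc-discrete-probe-transform} and its continuum limit, the probabilities that \(K\) passes near \(z_1,\dots,z_n\) in a prescribed order are written as mixed moments of killed Green functions of \(B\) times Brownian functionals. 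These depend only on \(B\), not on \(\mathfrak m\). Taking balls of radius \(r\to0\) with the correct normalization, these probe probabilities determine the finite-dimensional ``hitting'' statistics \(\mathfrak m(K\cap \overline{B(z_i,r)}\neq\varnothing\ \forall i)\) for all finite families of rational balls. By a \(\pi\)-system argument, these determine \(\mathfrak m\) on the Hausdorff Borel \(\sigma\)-field. For each fixed probe configuration, the passage from the loop-soup identity to Green-function moments is justified by the discrete identity plus convergence, applied to an arbitrary \(\mathfrak m\) via a comparison argument. Alternatively, the continuum identity is derived directly from \eqref{eq:lerw-recovery}, using the Markov/restriction structure of the Brownian loop measure.

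The main obstacle is the hittability estimate in the existence step, especially near \(\partial B\) and near the endpoint \(0\). It is also needed in uniqueness, to ensure that probe events for an arbitrary \(\mathfrak m\) are continuous functionals: loops near \(K\) must hit \(K\), which must be derived from \eqref{eq:lerw-recovery} alone. There, I would first try to show that any \(K\) satisfying recovery has the Brownian-range hittability inherited through \(\mathcal D_1(K)\supseteq K\) and the non-polarity of Brownian range for Brownian paths in 3D.
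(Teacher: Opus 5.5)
Your existence half matches the paper. The paper simply imports Sapozhnikov--Shiraishi's recovery theorem, extended to Lipschitz domains through \eqref{eq:boundary-escape}, and the LERW hittability you flag there as the main obstacle is already theirs. The genuine gap is in uniqueness, where the mechanism you describe is not the one that works.

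The probes do not compute probabilities that \(K\) passes near \(z_1,\dots,z_n\) in a prescribed order. What they give is the continuum identity \eqref{eq:continuum-probe-identity}, namely \(\mathbb E_C H_1(C,A)=\prod_i X_{x_i,y_i}(A)\) for every \emph{deterministic} compact \(A\subset\overline B\). Put \(A=K\sim\mathfrak m\), independent of \(C\), and use Fubini together with the symmetry \(H_1(C,K)=H_1(K,C)\). This gives
\[
 \mathbb E_{\mathfrak m}\prod_i X_{x_i,y_i}(K)
 =\mathbb E_C\bigl[F_{\mathfrak m}(C)\bigr]
 =\mathbb E_C\bigl[\mathbb P^0(W[0,\tau_B]\cap C=\varnothing)\bigr],
\]
which is independent of \(\mathfrak m\). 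No hittability or continuity property of an arbitrary \(K\sim\mathfrak m\) is ever needed. Your proposed way of obtaining one, by inheritance through \(\mathcal D_1(K)\supseteq K\), would fail anyway: hittability of a superset says nothing about the subset. The hittability that is needed belongs to the \emph{probe}. Passing \eqref{eq:finite-transform} to the limit requires a near-miss estimate for entangled multipath LERW built from normalized bridges, uniformly near their marked endpoints. This is \cref{prop:multipath-hittability}, proved via \cref{lem:prefix-witness} and the stopped Doob identity \eqref{eq:stopped-Doob-exact}. It is the paper's main new input, and you take ``its continuum limit'' for granted.

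Your step ``letting \(r\to0\) with the correct normalization yields \(\mathfrak m(K\cap\overline{B(z_i,r)}\ne\varnothing\ \forall i)\)'' is unsupported, and it is false without an extra input. Green coordinates are blind to polar sets. Adjoining to \(K\) a point or a short segment (both polar in \(\R^3\)) that avoids \(\mathcal Z\) leaves \(\Phi(K)\) unchanged but changes ball-hitting events. The recovery property must therefore be used to exclude such pieces, and this is done through local nonpolarity (\cref{lem:no-polar-subarc}). Suppose \(K\cap W\ne\varnothing\) but \(\NCap(K\cap\overline U)=0\). Then soup loops a.s.\ miss \(K\cap\overline U\). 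By \eqref{eq:separated-loop-finite}, with positive probability no loop meets both \(\overline V\) and \(\overline B\setminus U\). On that event \(\mathcal D_1(K)\cap\overline W=K\cap\overline W\) is nonempty and polar, which is impossible for a Brownian range. The identification then needs three further steps:
\begin{itemize}
\item moment determinacy for the \([0,1]\)-valued coordinates;
\item injectivity of \(\Phi\) on \(\mathfrak C_B^{\mathrm{np}}\) (\cref{prop:green-injective});
\item Lusin--Suslin inversion (\cref{rem:borel-inverse}).
\end{itemize}
Your \(\pi\)-system argument only becomes available after these steps, and at that point it is no longer needed.
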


The characterization also gives another route to full Hausdorff
convergence in \(B\). Compactness supplies subsequential limits,
and the recovery property in \cite[Theorems~1.1--1.2]{SS18}
extend to Lipschitz domains as explained in \cref{sec:domain-extension};
\cref{thm:main} then identifies their common law.

\medskip

We next state the second main result, the construction of three-dimensional Brownian loop-catchers. 
For every $\lambda\in[0,1]$, \cite[Theorem~1.3]{Cai26} associates with the random walk in
\(D^\delta=B\cap\delta\mathbb Z^3\) started from the origin and killed on leaving \(D^\delta\) a random vertex set
\(S_\lambda^\delta\), the \(\lambda\)-random-walk loop-catcher.  Its cell
trace is denoted by
\(K_\lambda^\delta\); see \cref{sec:discrete-model} for the precise 
convention. 
The construction in \cite[Algorithm~2.5 and Definition~2.6]{Cai26}
also supplies a time-parametrized directed path with range
\(S_\lambda^\delta\). 

Let \(\mathfrak C_B\) be the Borel space of compact connected
\(K\subset\overline B\) such that \(0\in K\) and \(K\cap\partial B\) is a
singleton.
For compact sets \(A,K\subset\overline B\), let \(m_B(K,A)\) stand for the measure of the loops in $B$ intersecting both $K$ and $A$, as defined in \eqref{eq:loop-mes}.
\begin{theorem}
\label{thm:blc-main}
For each fixed \(0<\lambda\leq1\), the laws of \(K_\lambda^\delta\)
converge as \(\delta=1/N\downarrow0\), in the Hausdorff topology, to a law
\(\nu_\lambda\) on \(\mathfrak C_B\).  If
\(K_\lambda^B\sim\nu_\lambda\), then \(K_\lambda^B\) has empty interior and,
for every compact \(A\Subset B\),
\begin{equation}\label{eq:blc-recovery-intro}
 \mathbb E\!\left[
  \one_{\{K_\lambda^B\cap A=\varnothing\}}
  e^{-\lambda m_B(K_\lambda^B,A)}\right]
 =\mathbb P^0\!\left(W[0,\tau_B]\cap A=\varnothing\right).
\end{equation}
Equivalently,
\begin{equation}\label{eq:looprecovery}
 \mathcal D_\lambda(K_\lambda^B)
 \stackrel d= W[0,\tau_B].
\end{equation}
Moreover, \(\nu_\lambda\) is the unique probability law on
\(\mathfrak C_B\) satisfying \eqref{eq:blc-recovery-intro}.  For \(\lambda=1\), $\nu_1$ reduces to \(\mathfrak m\) in \cref{thm:main}.
\end{theorem}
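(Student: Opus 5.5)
The proof has three parts: tightness, identification of every subsequential limit, and uniqueness. The first two use discrete input; the third is a purely continuum argument via the Green-function test of \cite{Cai26}.

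\emph{Tightness and regularity.} Fix \(\lambda\in(0,1]\). The sets \(K_\lambda^\delta\) are compact subsets of \(\overline B\), so Blaschke compactness gives tightness in the Hausdorff topology. Every subsequential limit \(K\) is compact and connected, and it contains \(0\). To show that \(K\cap\partial B\) is a singleton and that \(K\) has empty interior, I would use the time-parametrized directed path of \cite[Algorithm~2.5]{Cai26}. Since \(S_\lambda^\delta\) is contained in the walk range, near-boundary excursions of \(K_\lambda^\delta\) are controlled by Brownian/random-walk boundary estimates in a Lipschitz domain; these are the estimates of \cref{sec:domain-extension}. The first exit point converges, so \(K\cap\partial B\) is a single point. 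Empty interior follows because the walk range, and hence its scaling limit, has Lebesgue measure zero in \(\R^3\).

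\emph{Passing the recovery identity to the limit.} The discrete recovery identity \eqref{eq:blc-discrete-recovery-transform} states that
\[
\mathbb E\bigl[\one_{\{S_\lambda^\delta\cap A=\varnothing\}}e^{-\lambda m^\delta(S_\lambda^\delta,A)}\bigr]=P^0(\text{walk avoids }A).
\]
The right side converges to \(\mathbb P^0(W[0,\tau_B]\cap A=\varnothing)\) for regular \(A\), for instance finite unions of closed balls. The left side needs two continuity facts.
\begin{enumerate}
\item Random-walk loop measure converges to Brownian loop measure, and \(K\mapsto m_B(K,A)\) is continuous along the convergent sequence. This uses the fact that loops of small diameter contribute little near \(A\).
\item The discrete events \(\{K^\delta\cap A=\varnothing\}\) converge to \(\{K\cap A=\varnothing\}\).
\end{enumerate}
The second point is the hittability issue. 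If \(K\) comes within \(\eps\) of \(A\) but does not hit it, this near-miss must have small probability. I would prove a hittability estimate: conditionally on the past, the RWLC path near a point is hit by an independent walk, or by a loop, with probability bounded below uniformly in scale. For \(\lambda=1\) this is Sapozhnikov--Shiraishi-type quasi-loop and hittability control. For general \(\lambda\), I would use that \(S_\lambda^\delta\) contains the loop erasure (or is sandwiched between LERW and the walk trace), so LERW hittability transfers to it. I expect this to be the main obstacle: upgrading three-dimensional LERW hittability to RWLC uniformly in \(\delta\), and ruling out near-misses with \(A\).

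\emph{Uniqueness and conclusion.} Sets of the form \(\{K\cap A=\varnothing\}\), with \(A\) ranging over finite unions of balls, form a \(\pi\)-system generating the Borel structure on \(\mathfrak C_B\). It therefore suffices to show that \eqref{eq:blc-recovery-intro} determines the joint avoidance probabilities. Following \cite[Section~1.5]{Cai26}, I would place random probes: independent Brownian paths started at points \(x_1,\dots,x_k\), together with the entangled multipath LERW. The limiting version of \eqref{eq:blc-discrete-probe-transform} converts the recovery identity into mixed moments of killed Green functions of \(B\setminus K\), namely \(\mathbb E\bigl[\prod_i G_{B\setminus K}(x_i,y_i)\bigr]\)-type quantities. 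These moments are expressed solely through Brownian quantities. Moment determinacy holds because the Green functions are bounded by \(G_B\), so the law of \(K\) is determined by the law of its Green-function field, and by regularity \(K\) is recovered from where \(G_{B\setminus K}\) vanishes. Hence all subsequential limits coincide and \(\nu_\lambda\) exists. The equivalence of \eqref{eq:blc-recovery-intro} with \eqref{eq:looprecovery} is the Poisson void probability \(\mathbb P(\text{no loop meets both }K\text{ and }A)=e^{-\lambda m_B(K,A)}\). Finally, \(\nu_1=\mathfrak m\) by \cref{thm:main}, since \(S_1^\delta\) is the LERW.
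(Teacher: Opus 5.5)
Your overall architecture matches the paper's: compactness, then passing the discrete recovery and probe identities to the limit through a hittability and near-miss estimate, then the Green-function test with moment determinacy. Two steps would fail as you describe them.

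\textbf{Hittability transfer.} Knowing that $S_\lambda^\delta$ contains the loop erasure of the walk, or is sandwiched between LERW and the walk range, does not give local hittability of $S_\lambda^\delta$. The functional $\cH_\delta(C;s,L;U)$ takes a supremum over starting points within distance $s$ of the \emph{whole} target. Monotonicity in the target only helps for starting points near the LERW. Near a catcher vertex far from the final loop erasure, the sandwich gives nothing. Those points, and the loops passing near them, are exactly what the near-miss estimate must control.

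The paper replaces the sandwich by a pathwise witness statement, \cref{lem:blc-online-witness}. In Cai's recovery coupling, every catcher vertex $w$ equals $\widehat\omega_i(\sigma)$ for some recovered walk with $\supp\LE(\widehat\omega_i[0,\sigma])\subset S_{\lambda,\times}$. One then reruns the proof of \cref{prop:multipath-hittability}. Its ingredients are the off-center initial-segment estimate \cite[Lemma~3.3]{SS18} at the stopping times $\tau_Q$, a union bound over $O(s^{-3})$ cubes, the Doob factor for bridges, and endpoint control. Since the recovered walks have the same law for every $\lambda$, the bound is uniform in $\lambda$. The same issue arises for the probes. These must be the intensity-$\lambda$ multipath catchers $S_{\lambda,\times}^\delta$, not entangled multipath LERW or free Brownian paths, because the Fubini step pairs $H_\lambda(K,C)$ with the same $\lambda$ on both sides.

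\textbf{Reconstruction of $K$ from its Green coordinates.} In three dimensions a compact connected set can be polar: a line segment has zero Newtonian capacity, and then $G_{B\setminus K}=G_B$. Moreover, for a simple curve $K$ the set $B\setminus K$ is connected. So $G_{B\setminus K}$ does not vanish at points of $\mathcal Z$ off $K$, and $K$ can only be read off from the decay of $G_{B\setminus K}$ at regular points. Hence $\Phi$ is injective only on locally nonpolar sets, \cref{prop:green-injective}. None of the regularity you establish (connectedness, empty interior, a single boundary point) gives this.

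Uniqueness is claimed among \emph{all} laws on $\mathfrak C_B$ satisfying \eqref{eq:blc-recovery-intro}, where no discrete structure is available. So local nonpolarity must be derived from the recovery identity itself; this is \cref{lem:no-polar-subarc}. Suppose $\NCap(K\cap\overline U)=0$ while $K$ meets $W$. Then loops cannot hit $K\cap\overline U$, and with positive probability no attached loop enters $V$. On that event the decoration meets $W$ in a polar set, which a stopped Brownian range cannot do. Only after this step do moment determinacy and the Lusin--Suslin inverse (\cref{rem:borel-inverse}) identify the law.
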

Let us remark that for loop-catchers, convergence as curves modulo
increasing reparametrization is a stronger assertion, which is not yet
established by our argument: the recovery property \eqref{eq:looprecovery} determines the range
but does not keep its order of traversal.

We now briefly comment on the scope and consequences of our results.

\begin{remark}[Loop-erased Brownian motion]
\Cref{thm:main} gives an intrinsic justification for calling the
three-dimensional LERW scaling limit \emph{loop-erased Brownian motion} (LEBM):
its law is uniquely characterized by the Brownian recovery property.
This is uniqueness in distribution. Whether a Brownian trajectory almost
surely has a unique measurable loop-erasure, up to increasing reparametrization,
remains open in both two and three dimensions. 
\end{remark}

The characterization also gives inversion invariance of the
three-dimensional LERW scaling limit, which would be cumbersome to obtain
using the framework of \cite{Kozma}. We state the whole-space version
below. The bounded-domain statement, including its endpoint conventions,
is given in \Cref{rem:bdeddomain}: it covers normalized bridges between distinct
interior points and the interior-to-boundary exit-excursion formulation
of Section~\ref{sec:discrete-model}, with the exit law transformed under inversion.
\begin{corollary}\label{cor:inversion}
Let \(K_\infty\) be the trace of the scaling limit of infinite
three-dimensional LERW from \(0\), as in
\cite[Theorem~10.1]{LiShiraishi}, with its endpoint at infinity adjoined.
Regard this as a random compact subset of
\(\widehat{\R^3}=\R^3\cup\{\infty\}\), and extend
\(\iota(x)=x/|x|^2\) by \(\iota(0)=\infty\) and \(\iota(\infty)=0\).
Then
\[
 \iota(K_\infty)\stackrel{\mathrm d}=K_\infty.
\]
For oriented curves from \(0\) to \(\infty\), inversion is accompanied
by reversal of orientation. The assertion is modulo reparametrization.
\end{corollary}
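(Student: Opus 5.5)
The plan is to derive inversion invariance from the uniqueness in \cref{thm:main}, in a form transported to whole space. The basic invariance is that of the \emph{pair} (Brownian path, Brownian loop soup) under $\iota$, which is a Möbius map of $\widehat{\R^3}$. The Brownian loop measure in $\R^3$ is invariant under Möbius maps up to time change, since the conformal covariance of $\mu^{\mathrm{loop}}$ holds in every dimension for Möbius transformations. Its restriction to loops avoiding $0$ and $\infty$ is therefore preserved by $\iota$.

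For the walk itself, three-dimensional Brownian motion from $0$ to $\infty$ is not inversion invariant, but its Kelvin/$h$-transform is. After inversion, the path from $\infty$ to $0$ is Brownian motion conditioned to hit $0$, i.e.\ an $h$-process with $h(x)=|x|^{-1}$. Reversing its orientation gives a path from $0$ to $\infty$ whose unparametrized range agrees in law with $W[0,\infty)$. This is the classical reversal/inversion duality for transient Brownian motion in $d=3$. Hence $\iota(W[0,\infty))\cup\{\infty\}$ has the same law as $W[0,\infty)\cup\{\infty\}$, as compact subsets of $\widehat{\R^3}$.

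Next I would prove a whole-space version of the recovery characterization. Let $K_\infty$ be as in the statement, obtained as the limit of $K^{B_R}$ with $B_R=R\,\mathbb B$ as $R\to\infty$, consistent with \cite[Theorem~10.1]{LiShiraishi}. Passing to the limit in \eqref{eq:lerw-recovery}, in the form \eqref{eq:blc-recovery-intro} with $\lambda=1$, gives for every compact $A\subset\R^3\setminus\{0\}$
\[
 \mathbb E\bigl[\one_{\{K_\infty\cap A=\varnothing\}}e^{-m_{\R^3}(K_\infty,A)}\bigr]
 =\mathbb P^0\bigl(W[0,\infty)\cap A=\varnothing\bigr).
\]
Both sides are monotone limits in $R$. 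On the left we need $m_{B_R}\uparrow m_{\R^3}$, together with transience to control loops meeting both $K$ and a fixed $A$. Uniqueness must then be proved in whole space, i.e.\ among laws of simple curves from $0$ to $\infty$ in $\widehat{\R^3}$, the identity above determines the law. I would run the Green-function test of \cite{Cai26} with the killed Green functions replaced by the whole-space Green function $|x-y|^{-1}$. Alternatively, one can apply $\iota$ first and work in a bounded domain containing $\iota$ of the relevant region.

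With both ingredients, set $K'=\iota(K_\infty)$ with orientation reversed. Since $\iota$ is a bijection, $K'\cap A=\varnothing$ iff $K_\infty\cap\iota(A)=\varnothing$, and $m_{\R^3}(K',A)=m_{\R^3}(K_\infty,\iota(A))$ by Möbius invariance of loop measure. This uses that a.s.\ no loop passes through $0$ or $\infty$, which holds because points are polar in $d=3$. The left side of the identity for $K'$ at $A$ therefore equals the left side for $K_\infty$ at $\iota(A)$. That equals $\mathbb P^0(W[0,\infty)\cap\iota(A)=\varnothing)=\mathbb P^0(W[0,\infty)\cap A=\varnothing)$ by the inversion invariance of the Brownian range. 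Whole-space uniqueness then gives $K'\stackrel d=K_\infty$.

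The main obstacle is the whole-space uniqueness step: the Green-function test of \cite{Cai26} and the hittability estimates are set up in bounded domains. The first thing I would try is to avoid it by conditioning. Fix $0<r<1<R$ and compare restrictions to annuli using the domain Markov structure. Alternatively, reduce to the bounded-domain bridge statement of \Cref{rem:bdeddomain}, where $\iota$ maps a ball punctured at an interior point to the complement of a ball, and then let the radii degenerate.
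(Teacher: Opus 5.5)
\textbf{Verdict: genuine gap.} Your outline has the same architecture as the paper's proof: a whole-space recovery characterization of $K_\infty$, inversion invariance of $W[0,\infty)\cup\{\infty\}$ and of the intensity-one soup, then transport through $\iota$. The transport step itself is correct. The gap is the step you flag as the main obstacle, which is where the real work lies and which you leave open.

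\textbf{Whole-space uniqueness.} Running the Green-function test with $|x-y|^{-1}$ presupposes whole-space continuum probes with transform $\prod_i G_{\R^3\setminus A}(x_i,y_i)/G_{\R^3}(x_i,y_i)$. Nothing constructs these: Cai's identities are for finite vertex sets, and the hittability estimate is proved in bounded domains. Your annulus-conditioning and bridge-degeneration alternatives are only named, and inverting first does not help, since curves from $\infty$ to $0$ are still unbounded.

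The missing idea is that no whole-space probe is needed.
\begin{itemize}
\item Take the bounded probe $C_R$ of \cref{prop:continuum-probe} in $B_R$, apply it with obstacle $K\cap\overline B_R$, and integrate against a candidate law $\mu_j$ as in \cref{lem:mixed-moments}.
\item On $\{C_R\subset B_L\}$, replacing $m_{B_R}(K,C_R)$ by $m_\infty(K,C_R)$ costs at most the mass of loops meeting $\overline B_L$ and $B_R^c$. This is at most $cL/R$ by \eqref{eq:whole-space-loop-tail}.
\item After the replacement, the inner expectation $\mathbb E_{\mu_j}[\one_{\{K\cap C_R=\varnothing\}}e^{-m_\infty(K,C_R)}]$ is fixed by the whole-space recovery identity, independently of $j$.
\item So the mixed moments of $G_{B_R\setminus K}/G_{B_R}$ under $\mu_1,\mu_2$ differ by at most $2cL/R+2\mathbb P(C_R\not\subset B_L)$.
\item The last term vanishes as $R\to\infty$ and then $L\to\infty$. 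The reason is that $C_R$ lies in the union of normalized bridges in $B_R$. These are whole-space bridges conditioned to stay in $B_R$, and they have compact trace since $G_{\R^3}(x,y)<\infty$.
\item Monotone convergence then equates all mixed moments of $G_{\R^3\setminus K}/G_{\R^3}$.
\end{itemize}
You would still need to check that local nonpolarity and Green-coordinate injectivity extend to infinite curves, using \eqref{eq:whole-space-loop-tail} for local finiteness of crossing loops.

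\textbf{Existence.} This half is not justified as written. ``Both sides are monotone limits in $R$'' fails because the target $K^{B_R}$ itself changes with $R$. Moreover, $K\mapsto\one_{\{K\cap A=\varnothing\}}e^{-m(K,A)}$ is not Hausdorff-continuous, so weak convergence $K^{B_R}\Rightarrow K_\infty$ alone does not carry the recovery identity to the limit. The paper proceeds as follows.
\begin{itemize}
\item It uses the Li--Shiraishi finite/infinite comparison to bound $d_{\mathrm{TV}}$ between the laws of $K^{(R)}\cap B_L$ and $K_\infty\cap B_L$ by $c(S/R+L/S)$.
\item It couples the two traces on $B_S$, independently of a single whole-space soup.
\item On agreement, the two decorations in $B_L$ can differ only through a loop meeting $B_L$ and $B_S^c$, which has probability at most $cL/S$.
\end{itemize}
The same comparison, with return bounds at dyadic scales, is what shows that $K_\infty\cup\{\infty\}$ is a simple compact path in $\widehat{\R^3}$. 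This places $K_\infty$, and hence the reversed $\iota(K_\infty)$, in the class on which uniqueness is applied.
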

We will give a proof sketch of \Cref{cor:inversion} in Appendix~\ref{sec:whole-space}. 

\begin{remark}[Continuum Laplacian-\(1\) motion]
The problem of defining Laplacian-\(b\) motion in three dimensions
was already formulated by Lawler~\cite[Section~1]{LawlerLaplacian}.
At \(b=1\), it would be interesting to characterize the loop-erased
Brownian motion constructed here by this local growth rule. In the discrete model, conditional on an initial
segment \(\eta\), the next-step weights are proportional to
\(p(\eta_{\mathrm{tip}},y)h_\eta(y)\), where \(h_\eta(y)\) is the
probability of reaching the target before hitting \(\eta\).
In a continuum domain \(D\), the corresponding function is
\[
 h_t(x)=\mathbb P^x\bigl(\tau_{\partial D}
                         <\tau_{\gamma[0,t]}\bigr).
\]
The problem is to define the normalized first-power harmonic weights
at the random tip, prove independence from the tip regularization,
and establish existence and uniqueness of the resulting growth law.
One possible route to identification is to derive the intensity-one
recovery property \eqref{eq:lerw-recovery} from this local rule and then apply
\cref{thm:main}. Let us remark that the comparison of harmonic functions in the proof of the boundary Harnack principle \cite[Theorem~1.5 and Remark~1.3]{GLLLL24} may help control the normalized
weights near the tip.
 This question is left for our future work.
\end{remark}

\begin{remark}[Universality of the three-dimensional uniform spanning tree]
\label{rem:universality}
The characterization proved here also provides a route to universality.
In forthcoming work with D.~A. Croydon \cite{CCLSprep}, we study
universality of the scaling limit of the three-dimensional
uniform spanning tree in Schramm's topology.
For earlier results on scaling limits of the three-dimensional
uniform spanning tree, see \cite{ACHS21}.
\end{remark}

\begin{remark}[Convergence rates]\label{rem:rates}
This work does not establish a power-law rate in the L\'evy--Prokhorov metric on laws of compact
sets, equipped with Hausdorff distance. 
However, for the sharp one-point estimates of
Li--Shiraishi~\cite{LiShiraishiOnePoint}, the needed convergence input
is more specific: a polynomial comparison rate for the macroscopic
non-intersection probabilities of simple random walk (SRW) and LERW, to which the present framework
indeed offers a route through quantitative refinements of the
Green-function, bridge-convergence, hittability, and near-miss
estimates; we leave these refinements to curious readers.
\end{remark}

\subsection*{Proof architecture}

The proof follows the outline in~\cite[Section 1.5]{Cai26} by implementing the Green-function test of \cite{Cai26} to three-dimensions.
According to Fact \ref{lem:closed-decoration-avoidance}, the recovery property
gives the weighted avoidance identities
\begin{equation}\label{eq:recovery-transform}
 F_{\mathfrak m}(A):=
 \mathbb E_{\mathfrak m}\!\left[
  \one_{\{K\cap A=\varnothing\}}e^{-m_B(K,A)}\right]
 =\mathbb P^0\big(W[0,\tau_B]\cap A=\varnothing\big).
\end{equation}
We recover the law of \(K\) from this transform in three steps.
\begin{enumerate}[label=\textnormal{(\roman*)},leftmargin=2.4em]
\item \emph{Finite probes and their limits.}
\Cref{prop:finite-probe} recalls Cai's recovery property \eqref{eq:finite-transform} for entangled multipath LERW. Bridge convergence and the common decoration-convergence
result in \cref{lem:green-path-limit,prop:common-decoration-stability,cor:recovery-transfer} yield
the continuum probe identity \cref{prop:continuum-probe}.
The key hittability estimate for 3D entangled
multipath LERW is stated in
\cref{prop:multipath-hittability} and proved in \cref{sec:hittability-input}.

\item \emph{Uniqueness from Green functions.}
The recovery law \eqref{eq:lerw-recovery} forces local nonpolarity. An independent probe converts
the weighted transform into all mixed moments of normalized Green functions. The common results
\cref{lem:no-polar-subarc,prop:green-injective,lem:mixed-moments,rem:borel-inverse}
in \cref{sec:prelim} show that these coordinates determine the law of the random compact set. Applying them to the intensity-one probes proves \cref{thm:main}.

\item \emph{General loop-catchers.}
For every catcher vertex, a recovered random walk has an initial segment
whose loop erasure ends at that vertex and is contained in the catcher
(\cref{lem:blc-online-witness}). This inclusion transfers the hittability
estimate to every intensity in \cref{prop:blc-uniform-hittability}.
The same decoration-convergence result then gives
\cref{prop:blc-continuum-identities}, and the same Green-function
identification proves \cref{thm:blc-main}. The shared arguments are collected in \cref{sec:prelim,sec:decoration}.
\end{enumerate}

\noindent{\bf Acknowledgements}: During D.S.'s stay at ETH Zurich in 2014--2016, Alain-Sol Sznitman suggested approaching \cite[Conjecture~1.3]{SS18} by expressing $\mathbb{E}[f(K)]$, for a sufficiently rich class of functionals $f$, solely in terms of Brownian motion, thereby characterizing the law of $K$. D.S. is grateful to him for this insightful and far-sighted suggestion. G.C.\ and X.L.\ are supported by the National Key R\&D Program of China (No.~2021YFA1002700) and Beijing Natural Science Foundation (JQ26001). D.S.\ was supported in part by JSPS KAKENHI Grant Numbers JP26K00609 and JP23K22399.

AI tools were only used for language polishing and proofreading of the manuscript. The authors take full responsibility for its content.

\section{Preliminaries}\label{sec:prelim}

This section fixes the discrete and continuum conventions, recalls
Cai's finite recovery identities
\eqref{eq:blc-discrete-recovery-transform}--\eqref{eq:blc-discrete-probe-transform},
and collects the Green-function identification tools and bridge
convergence needed below.

\subsection{Notation and conventions}

All distances and balls are Euclidean unless a lattice subscript is
shown. For a set \(A\), write \(\overline A\), \(A^c\),
\(\partial A\), and \(\operatorname{int}A\) for its closure,
complement, boundary, and interior. We write \(A\Subset D\) when
\(\overline A\) is compact and contained in the open set \(D\).
For nonempty sets \(A,C\),
\[
 \dist(A,C)=\inf_{x\in A,y\in C}|x-y|,
 \qquad \diam(A)=\sup_{x,y\in A}|x-y|.
\]
The Hausdorff distance between nonempty compact sets is
\[
 \Hd(A,C)=\max\!\left\{
   \sup_{x\in A}\dist(x,C),\,
   \sup_{y\in C}\dist(y,A)\right\}.
\]
We distinguish an ordered discrete path \(\omega\), its vertex support
\(\supp(\omega)\), and the trace \(\tr(\gamma)\) of a continuous or
linearly interpolated path. Loop masses use vertex supports in the
discrete setting; Hausdorff convergence concerns the associated compact
traces. The notation \(\mathbf1_E\) denotes the indicator of an event
\(E\), \(\stackrel d=\) equality in distribution, and \(\Rightarrow\)
weak convergence in the stated topology. Unlabelled probabilities and
expectations refer to all random objects in the expression; a superscript
\(x\) specifies the starting point of a walk or Brownian motion.

Positive constants \(c,C\) may change from line to line. Their dependence
on fixed domains, compact sets, and finite endpoint lists is allowed
unless stated otherwise; they are independent of the mesh and of the
small parameters in the displayed limits. Subscripts indicate any other
allowed dependence. We use \(f\lesssim g\) for \(f\le Cg\),
\(f\asymp g\) for two-sided comparison, \(f=O(g)\) for
\(|f|\le C|g|\), and \(f=o(g)\) for \(f/g\to0\), in the specified
limit.

\subsection{Brownian loops and normalized Brownian bridges}
We fix the Brownian loop and bridge normalizations and express loop decoration through avoidance probabilities.

For a bounded open \(D\subset\R^3\), Brownian motion has generator
\(\tfrac12\Delta\). Let \(p_D(t,x,y)\) be its killed heat kernel and

\begin{equation*}
 G_D(x,y)=\int_0^\infty p_D(t,x,y)\dd t.
\end{equation*}
For \(t>0\) and \(x,y\in D\) in the same component, let
\(\mathbf P_{x,y}^{D,t}\) be the killed Brownian bridge of duration \(t\).
For distinct \(x,y\), following \cite[(3.2)]{Cai26}, the normalized
Brownian bridge \(\Gamma_D^{x,y}\) is defined by
\begin{equation}\label{eq:green-bridge}
 \mathbb P(\Gamma_D^{x,y}\in\,\cdot\,)
 =\frac1{G_D(x,y)}\int_0^\infty
       p_D(t,x,y)\mathbf P_{x,y}^{D,t}(\,\cdot\,)\dd t.
\end{equation}

Its sample has a random finite duration, joins \(x\) to \(y\), and
is conditioned to remain in \(D\) throughout that duration.

We use the unrooted Brownian loop measure \(\mu_D^{\mathrm{loop}}\) with the conventional
normalization.  More precisely, the rooted measure on the right-hand side of
\begin{equation}\label{eq:brownian-loop-measure}
 \widetilde\mu_D^{\mathrm{root}}(\cdot)
 =\int_D\int_0^\infty
       \frac1t p_D(t,z,z)\mathbf P_{z,z}^{D,t}(\cdot)\dd t\dd z.
\end{equation}
is pushed forward under the quotient by cyclic time shift; this is the
definition of \(\mu_D^{\mathrm{loop}}\) used below.  Equivalently, it is the unrooted
push-forward of the rooted convention in \cite[Section~2.3]{SS18},
restricted to loops contained in \(D\); see also \cite{LawlerWerner}.
We henceforth identify a loop with its trace whenever intersections or
Hausdorff distance are considered.  For compact \(A,C\), define
\begin{equation}\label{eq:loop-mes}
 m_D(A,C):=\mu_D^{\mathrm{loop}}\{\ell:\ell\cap A\ne\varnothing,
                            \ \ell\cap C\ne\varnothing\}.
\end{equation}

For compact \(K\subset\overline D\), we also use
\(\mathcal D_\lambda(K)\) for \eqref{eq:decoration} with \(B\) replaced
by \(D\); the ambient domain will be clear from context.

The mutual loop mass is symmetric and may be infinite when the sets meet.
Every kernel
\(\one_{\{A\cap C=\varnothing\}}e^{-\lambda m_D(A,C)}\)
is defined to be zero on the intersecting event and to have the displayed
exponential value on the disjoint event, where the mass is finite.
We use the Newtonian kernel \(|x-y|^{-1}\) to define capacity.

We repeatedly use two elementary cutoff facts.  If compact
\(A,C\subset\overline D\) have distance at least \(d>0\), then
\begin{equation}\label{eq:separated-loop-finite}
 m_D(A,C)<\infty.
\end{equation}
Indeed, for duration \(t<1\), a loop meeting both sets makes a displacement
at least \(d\), and the Brownian-bridge Gaussian bound is
\(O(e^{-c d^2/t})\); for \(t\ge1\), the killed heat trace decays exponentially by the spectral gap.  The same proof, uniformly on lattice
approximations, will be used without further comment.  Second, for fixed
\(a>0\), the mass of loops with \(\diam(\ell)\ge a\) is locally finite.

Note that closure does not affect avoidance of a fixed compact set, allowing the Poisson formula to compute the avoidance probability of the decoration.
\begin{fact}
\label{lem:closed-decoration-avoidance}
Let \(D\subset\R^3\) be bounded and open, let
\(K,A\subset\overline D\) be compact, and let
\(\mathcal L_D^\lambda\) be a Brownian loop soup of intensity
\(0<\lambda<\infty\). Let \(H\) be the union in \eqref{eq:decoration} before taking closure, so that \(\overline H=\mathcal D_\lambda(K)\):
\[
 H=K\cup\bigcup_{\ell\in\mathcal L_D^\lambda:
                         \,\ell\cap K\ne\varnothing}\ell.
\]
Almost surely, \(\overline H\cap A=\varnothing\) if and only if
\(H\cap A=\varnothing\).  Consequently, for a random compact \(K\)
independent of the soup and every deterministic compact \(A\),
\[
 \mathbb P(\overline H\cap A=\varnothing\mid K)
 =\one_{\{K\cap A=\varnothing\}}e^{-\lambda m_D(K,A)}.
\]
Here the closure is taken in \(\overline D\).
\end{fact}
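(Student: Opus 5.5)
The plan has two parts: first show that the closure adds no new intersection with \(A\), then compute the avoidance probability of \(H\) with the Poisson formula.

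\emph{Closure step.} The inclusion \(H\subset\overline H\) gives one direction. For the other, suppose \(H\cap A=\varnothing\) and take \(x\in\overline H\cap A\). Since \(K\subset H\), we have \(K\cap A=\varnothing\), so \(d:=\dist(K,A)>0\). By \eqref{eq:separated-loop-finite}, \(m_D(K,A)<\infty\). Hence almost surely only finitely many soup loops meet both \(K\) and \(A\); on our event no loop meets \(A\) at all, but we only need the finiteness.

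Write \(x=\lim x_n\) with \(x_n\in H\). Eventually \(\dist(x_n,K)>d/2\), so \(x_n\in\ell_n\) for some loop \(\ell_n\) meeting \(K\). Each such \(\ell_n\) meets \(K\) and passes within \(o(1)\) of \(A\), so for large \(n\) it has \(\diam(\ell_n)\ge d/2\).

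By local finiteness of the mass of loops of diameter at least \(d/2\), almost surely only finitely many soup loops have diameter at least \(d/2\). There is one subtlety: loops near \(\partial D\) could accumulate. I handle this by noting that \(A\) is a compact subset of \(\overline D\), and that loop mass of large-diameter loops is finite in a bounded domain. The spectral-gap bound for long durations and the Gaussian bound for short ones, as in \eqref{eq:separated-loop-finite}, show that \(\mu_D^{\mathrm{loop}}\{\diam\ge a\}<\infty\) globally on bounded \(D\), not just locally.

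So along a subsequence \(\ell_n=\ell\) is a single loop. Loop traces are compact, so \(x\in\ell\) and \(\ell\subset H\), giving \(x\in H\cap A\). This contradicts \(H\cap A=\varnothing\), so \(\overline H\cap A=\varnothing\).

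\emph{Poisson step.} Condition on \(K\); by independence the soup is still a Poisson point process of intensity \(\lambda\mu_D^{\mathrm{loop}}\). The event \(H\cap A=\varnothing\) is exactly \(\{K\cap A=\varnothing\}\) intersected with the event that no soup loop meets both \(K\) and \(A\). The latter has probability \(\exp(-\lambda m_D(K,A))\) by the Poisson void probability, and this holds even when the mass is infinite, with the convention in the paper. Together with the closure step this gives the formula.

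Measurability of \(\{\ell:\ell\cap K\ne\varnothing,\ \ell\cap A\ne\varnothing\}\) is standard for compact sets, since hitting is a closed condition on traces. The joint measurability in \(K\) then follows by approximation.

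The main obstacle is the finiteness of large-diameter loops uniformly up to the boundary. I would handle it with the heat-trace bound \(\int_D p_D(t,z,z)\,dz\le Ce^{-ct}\) for \(t\ge1\), together with the short-time Gaussian displacement estimate.
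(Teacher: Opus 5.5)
Your proposal is correct and follows the paper's justification, which is only sketched there. The paper relies on the two cutoff facts stated just before the Fact: finite mutual mass for separated compact sets, and finiteness of the mass of loops of diameter at least \(a\). These show that closure adds no point of \(A\), after which the Poisson void probability gives the formula. Two minor points: your global finiteness check on bounded \(D\) is valid even without the spectral gap, since the rooted integrand is bounded by \(Ct^{-5/2}e^{-ca^2/t}\), which is integrable on \((0,\infty)\); and the finiteness of \(m_D(K,A)\) is never actually used in your closure step.
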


\subsection{Discrete conventions}\label{sec:discrete-conventions}We record the discrete counterparts of these objects and the estimates used to pass to the continuum.
We take \(\delta=N^{-1}\), \(N\in\mathbb N\), and let
\(D^\delta=D\cap\delta\Z^3\); all marked lattice approximations lie
in the same lattice component for every sufficiently small \(\delta\).
In the notation of \cite[Section~3.2]{Cai26}, write \(Q^\delta\) for the
killed transition matrix,
\begin{equation*}
 Q^\delta_{uv}=\frac16
 \one_{\{u,v\in D^\delta,\ |u-v|=\delta\}},
\end{equation*}
and let \(Q^{\delta,n}(u,v)\) be its \(n\)-step kernel.  Thus simple
random walk is killed on leaving \(D^\delta\), and
\begin{equation*}
 \mathcal G_{D^\delta}(u,v)=\sum_{n\ge0}Q^{\delta,n}(u,v)
 =\bigl(I-Q^\delta\bigr)^{-1}_{uv}.
\end{equation*}
A normalized random-walk bridge from \(a\) to \(b\) assigns a finite
nearest-neighbour path \(\omega=(\omega_0,\ldots,\omega_n)\subset D^\delta\)
from \(a\) to \(b\) the mass
\begin{equation*}
 \mathbb P_{D^\delta}^{a,b}(\omega)
 =\frac{q^\delta(\omega)}{\mathcal G_{D^\delta}(a,b)},\qquad
 q^\delta(\omega)=\prod_{j=1}^n Q^\delta_{\omega_{j-1}\omega_j}.
\end{equation*}

Time is rescaled by \(\delta^2/3\), so that linearly interpolated walk
converges to standard Brownian motion. Fix a compact box \(J_D\) whose
interior contains a neighbourhood of \(\overline D\). Interpolated lattice
paths and their traces are viewed in \(J_D\): for nonconvex \(D\), an
interpolated edge can leave \(\overline D\), but stays within \(O(\delta)\)
of its vertex support. Limit traces therefore lie in \(\overline D\).
For a path of lifetime \(t\), the variable-duration uniform topology
records \((t,\gamma(t\,\cdot))\) in
\((0,\infty)\times C([0,1],J_D)\).

For a set \(F\subset\R^3\) and \(\rho>0\), put
\begin{equation*}
 F^{+\rho}=\{x\in\R^3:\dist(x,F)\le\rho\}.
\end{equation*}

This is the closed Euclidean neighbourhood; after restriction to the
domain it corresponds to the notation \(F^{(\rho)}\) in
\cite[Section~3.3]{Cai26}. We keep the superscript \(+\rho\) to allow
neighbourhoods in the ambient box \(J_D\), including points outside \(D\).

We use the lattice balls and exit times
\begin{equation}\label{eq:lattice-ball-exit}
 \mathbb B_\delta(x,r)=\{y\in\delta\Z^3:|y-x|\le r\},\qquad
 \tau_{x,r}=\inf\{n\ge0:S_n\notin\mathbb B_\delta(x,r)\}.
\end{equation}
Write \(\tau_{D^\delta}=\inf\{n\ge0:S_n\notin D^\delta\}\), and let
\(p_n^\delta(u,v)\) denote the unrestricted \(n\)-step transition kernel on
\(\delta\Z^3\).

The unrooted random-walk loop
measure \(m_{D^\delta}^{\delta}\) is the push-forward of

\begin{equation*}
 \sum_{z\in D^\delta}\sum_{n\ge1}\frac1n
 Q^{\delta,n}(z,z)\mathbf P_{z,z}^{\delta,n}
\end{equation*}
under the quotient of cyclic shift; here
\(\mathbf P_{z,z}^{\delta,n}\) is the \(n\)-step walk bridge conditioned
to return to \(z\).  This normalization is compatible with
\eqref{eq:brownian-loop-measure} after diffusive scaling.
For vertex sets \(A,C\subset D^\delta\), we write
\begin{equation*}
 m_{D^\delta}^{\delta}(A,C)
 =m_{D^\delta}^{\delta}
   \{\ell:\ell\cap A\ne\varnothing,\ \ell\cap C\ne\varnothing\}
\end{equation*}
for the corresponding mutual random-walk loop mass.  Thus mutual masses
always use vertex supports; cell traces and linear interpolations are used
only for geometric convergence.

We collect the standard random-walk estimates needed below, with their three-dimensional normalization. Whenever
\(U\Subset D\), \(u,v\in U\cap\delta\Z^3\), and \(u\ne v\),
\begin{equation}\label{eq:green-3d-bounds}
 \mathcal G_{D^\delta}(u,v)\le C\frac{\delta}{|u-v|},
 \qquad
 \mathcal G_{D^\delta}(u,v)\ge c\delta
\end{equation}
in the second inequality when \(u,v\) approximate two fixed distinct
points in the same component. The upper bound is
\cite[Theorem~4.3.1]{LawlerLimic}; the lower bound follows from
\eqref{eq:green-normalization} below, with a constant depending on the
limiting endpoints and \(D\). We also use the Gaussian heat-kernel bound
\begin{equation}\label{eq:lclt}
 Q^{\delta,n}(u,v)\le Cn^{-3/2}
       \exp\{-c|u-v|^2/(n\delta^2)\},\qquad n\ge1,
\end{equation}
the bounded-domain spectral estimate
\begin{equation}\label{eq:discrete-spectral-bound}
 Q^{\delta,n}(u,v)
 \le C\delta^3e^{-c n\delta^2},
 \qquad n\ge\delta^{-2},
\end{equation}
and the exit estimate
\begin{equation*}
 \sup_u\mathbb P^u(\tau_{u,L}>t/\delta^2)\le C L^2/t,
 \qquad \delta\le L,\quad L^2\le t.
\end{equation*}

We shall also use the following killed local limit,
including its normalization.  If \(D\) is a bounded Lipschitz domain,
\(t_n=n\delta^2/3\to t\in(0,\infty)\) along admissible parities, and
\(u_\delta\to u\), \(v_\delta\to v\) in compact subsets of \(D\), then
\begin{equation}\label{eq:killed-lclt}
 \frac{Q^{\delta,n}(u_\delta,v_\delta)}{2\delta^3}
 \longrightarrow p_D(t,u,v),
\end{equation}
locally uniformly in \((t,u,v)\), and for fixed distinct \(u,v\) in one
component,
\begin{equation}\label{eq:green-normalization}
 \frac{\mathcal G_{D^\delta}(u_\delta,v_\delta)}{3\delta}
 \longrightarrow G_D(u,v).
\end{equation}
The constants two and three in these formulas are important: two is the
period of simple random walk, while admissible times have mesh
\(2\delta^2/3\).

The Gaussian bound follows from the periodic local central limit theorem
and Gaussian displacement bound
\cite[Theorem~2.1.3 and Proposition~2.1.2]{LawlerLimic}, by a half-time
semigroup split. Comparison with an enclosing cube gives a spectral gap
of order \(\delta^2\); combining this with the semigroup and Gaussian
bound gives \eqref{eq:discrete-spectral-bound}. The exit bound is the
mean exit-time estimate followed by Markov's inequality. For interior
Harnack and difference estimates we use
\cite[Theorems~6.3.8--6.3.9]{LawlerLimic}.

To obtain \eqref{eq:killed-lclt}, apply the periodic local limit theorem
and the invariance principle to the standard exit decomposition
\begin{equation}\label{eq:exit-decomposition}
 Q^{\delta,n}(u,v)=p_n^\delta(u,v)
 -\mathbb E^u\!\left[p_{n-\tau_{D^\delta}}^\delta
 (S_{\tau_{D^\delta}},v);\ \tau_{D^\delta}\le n\right].
\end{equation}
For a Lipschitz domain, exit times and locations converge by the exterior-cone estimate in \cref{sec:domain-extension}; exits within
\(\eta\) of the terminal time contribute uniformly negligibly as
\(\eta\downarrow0\), by the Gaussian bound and the positive distance
of \(v\) from the boundary. This proves the stated local uniformity.
Summing over admissible times gives \eqref{eq:green-normalization},
with the Gaussian and spectral bounds removing the duration cutoffs.

\subsection{Boundary estimates for Lipschitz domains}\label{sec:domain-extension}
For a bounded Lipschitz domain \(D\), the exterior cone condition gives constants \(r_D,c,C>0\) such that
\begin{equation}\label{eq:boundary-escape}
 \mathbb P^x\!\left(\tau_{x,r}<\tau_{D^\delta}\right)
 \le C\left(\frac{\dist(x,\partial D)+\delta}{r}\right)^c,
 \qquad x\in D^\delta,\quad \delta\le r\le r_D.
\end{equation}
Indeed, the exterior cone gives a uniformly positive probability of killing
at each geometric scale, and iteration gives \eqref{eq:boundary-escape}. We spell out the two
places where this estimate replaces the boundary estimates used in \cite{SS18}.
Let $S$ start at a lattice approximation of a fixed $a\in D$, write
$\tau=\tau_{D^\delta}$, and let
\[
  \sigma_\eta=\inf\{n\geq0:\operatorname{dist}(S_n,\partial D)\leq\eta\}.
\]
For $2\delta\leq\eta<r/8$ and $r\leq r_D$, the strong Markov property and
\eqref{eq:boundary-escape}, with adjusted constants, imply
\[
  \mathbb P\bigl(\operatorname{diam}S[\sigma_\eta,\tau]>r\bigr)
  \leq C\left(\frac{\eta+\delta}{r}\right)^c.
\]
The same
bound also controls the diameter of its loop-erasure. This supplies the boundary case of
\cite[Theorem~6.1]{SS18} and the terminal-segment estimate of
\cite[Proposition~6.6]{SS18}. The estimates near $a$ and on compact subsets of $D$ use
only local hittability and return estimates. Consequently the topological
argument of \cite[Section~6.2]{SS18} applies in the ambient compact set $J_D$:
every subsequential Hausdorff limit is a simple path from $a$ to
$\partial D$, with no other boundary contact. The possible $O(\delta)$
excursions of interpolated edges outside $D$ disappear in the limit.

For recovery, inserting \eqref{eq:boundary-escape} into the rooted-loop argument of
\cite[Proposition~5.1]{SS18} gives, for each fixed $r>0$,
\[
 \lim_{\eta\downarrow0}\limsup_{\delta\downarrow0}
 m^\delta_{D^\delta}\!\left\{
 \ell:\operatorname{diam}\ell\geq r,\
 \operatorname{dist}(\operatorname{supp}\ell,\partial D)\leq\eta
 \right\}=0.
\]
Thus the macroscopic loops may first be restricted to a compact subset of
$D$, where the interior hittability and loop-selection arguments of \cite[Section~5.2]{SS18}
apply. Removing the boundary and duration cutoffs, and then
the diameter cutoff, passes the finite recovery property to the limit;
loops of diameter less than $r$ change the decoration by at most $r$ in
Hausdorff distance. Together with convergence of the stopped walk and its
exit point, this yields
\[
  \mathcal D_1^D(K)\overset{d}=W^a[0,\tau_D].
\]
Here $\mathcal D_1^D$ denotes the direct decoration \eqref{eq:decoration} in $D$.
The constants may depend on the fixed Lipschitz domain $D$.

The recovery property in
\cite[Theorems~1.1--1.2]{SS18}, stated there for the unit ball,
extend to these domains by the same proofs: the estimates near the
starting point and in the interior are local, while
\eqref{eq:boundary-escape} supplies the boundary estimates in their
quasi-loop and terminal-segment arguments \cite[Section~6]{SS18}.
Their compactness argument then applies in the ambient space
\(J_D\); the \(O(\delta)\) excursions of interpolated edges outside
\(D\) disappear in the limit. We use these domain extensions below.

\subsection{Loop-catchers and finite recovery}
\label{sec:discrete-model}

We now recall Cai's loop-catchers and the finite recovery identities \eqref{eq:blc-discrete-recovery-transform}--\eqref{eq:blc-discrete-probe-transform} used in both main results.

Use \(D^\delta=B\cap\delta\mathbb Z^3\) and the killed transition matrix \(Q^\delta\) from \cref{sec:discrete-conventions}
and set
\begin{equation}\label{eq:blc-radial-data}
 \alpha^\delta_x=\one_{\{x=0\}},
 \qquad
 \beta^\delta_x=\frac16\#\{y\in\delta\Z^3\setminus D^\delta:|y-x|=\delta\}.
\end{equation}
The normalized excursion associated with
\((D^\delta,Q^\delta,\alpha^\delta,\beta^\delta)\) is exactly the interior
path
\begin{equation*}
 (X_0^\delta,\ldots,X_{\tau_\delta-1}^\delta),
 \qquad
 \tau_\delta=\inf\{n\ge1:X_n^\delta\notin D^\delta\},
\end{equation*}
of simple random walk started at the origin; its partition function is one.
Each step has duration \(\delta^2/3\).

For \(x\in\delta\mathbb Z^3\), write
\begin{equation*}
 \square_\delta(x)=x+[-\delta/2,\delta/2]^3,
 \qquad
 \operatorname{Cell}_\delta(S)
 =\overline B\cap\bigcup_{x\in S}\square_\delta(x).
\end{equation*}
Let \(S_\lambda^\delta\) be the range of the time-parametrized
\(\lambda\)-random-walk loop-catcher of \cite[Algorithm~2.5 and
Definition~2.6]{Cai26} for the setup \eqref{eq:blc-radial-data}.  Conditional
on \(X_{\tau_\delta-1}^\delta=x\), choose an exterior neighbour
\(Y_\delta\) with probability \((1/6)/\beta_x^\delta\), and let
\(\zeta_\delta\) be the first point of \([x,Y_\delta]\) on \(\partial B\).
Using the same connector in the catcher and the original-walk trace, put
\begin{equation}\label{eq:blc-geometric-traces}
 K_\lambda^\delta
 =\operatorname{Cell}_\delta(S_\lambda^\delta)
       \cup[X_{\tau_\delta-1}^\delta,\zeta_\delta],
 \qquad
 R^\delta
 =\operatorname{Cell}_\delta\{X_0^\delta,\ldots,
                         X_{\tau_\delta-1}^\delta\}
       \cup[X_{\tau_\delta-1}^\delta,\zeta_\delta].
\end{equation}
Changing the connector by a set of diameter \(O(\delta)\) does not change
any scaling limit. For nonconvex \(B\), the clipped cell trace remains at Hausdorff distance \(O(\delta)\) from the connected ambient cell trace, so connectedness is preserved in the limit.  Loop masses below act on vertex supports; Hausdorff
convergence acts on the cell traces \eqref{eq:blc-geometric-traces}.

We use the multipath version as well.  For a finite ordered list of
interior endpoint pairs \((a_i,b_i)_{i\le r}\), with \(a_i\ne b_i\) and
repetitions of pairs allowed, and for lattice approximations,
let \(S_{\lambda,\times}^\delta\) denote the range returned by
\cite[Algorithm~2.5]{Cai26} from independent normalized random-walk bridges
\(a_i^\delta\to b_i^\delta\).  Write
\begin{equation*}
 C_{\lambda,\times}^\delta
 =\operatorname{Cell}_\delta(S_{\lambda,\times}^\delta).
\end{equation*}

The finite-graph results we need are the following.  First,
\(S_\lambda^\delta\) is connected, and
\begin{equation}\label{eq:blc-discrete-recovery-transform}
 \mathbb E\!\left[
  \one_{\{S_\lambda^\delta\cap F=\varnothing\}}
  e^{-\lambda m_{D^\delta}^{\delta}(S_\lambda^\delta,F)}\right]
 =\mathbb P\!\left[
  \{X_0^\delta,\ldots,X_{\tau_\delta-1}^\delta\}\cap F=\varnothing
  \right]
\end{equation}
for every \(F\subset D^\delta\).  Second,
\begin{equation}\label{eq:blc-discrete-probe-transform}
 \mathbb E\!\left[
  \one_{\{S_{\lambda,\times}^\delta\cap F=\varnothing\}}
  e^{-\lambda m_{D^\delta}^{\delta}(S_{\lambda,\times}^\delta,F)}\right]
 =\prod_{i=1}^r
   \frac{\mathcal G_{D^\delta\setminus F}(a_i^\delta,b_i^\delta)}
        {\mathcal G_{D^\delta}(a_i^\delta,b_i^\delta)}.
\end{equation}
The first identity is \cite[Theorem~1.3 and (1.7)]{Cai26}, and the
second is \cite[(2.10)]{Cai26}.  Algorithm~2.7 of \cite{Cai26}
recovers paths from the time-parametrized catcher and an independent
intensity-\(\lambda\) random-walk loop soup.  Proposition~2.8 of \cite{Cai26}
identifies their joint law: in the radial setup the recovered path
has the law of the original interior path
\((X_0^\delta,\ldots,X_{\tau_\delta-1}^\delta)\),
and in the multipath setup the recovered paths are independent
normalized random-walk bridges.

\subsection{Compact sets and measurable kernels}
In this subsection, we specify the measurable spaces and kernels needed to recover a random compact set from its Green coordinates.

Let \(\K(\overline B)\) be the space of nonempty compact subsets of
\(\overline B\), equipped with Hausdorff distance; it is compact Polish. 
We also allow the empty set as an isolated point when taking
intersections. The class \(\mathfrak C_B\) defined in the introduction
is Borel: connectedness and containment of \(0\) are closed conditions,
and the condition that \(K\cap\partial B\) is a singleton is Borel.

We set \(G_{B\setminus K}(x,y)=0\) if either endpoint belongs to
\(K\) or if the endpoints lie in different components of
\(B\setminus K\). Note that for fixed distinct \(x,y\in B\) and open \(O\),
the maps
\begin{equation}\label{eq:borel-basic}
 K\longmapsto\dist(K,O^c),\qquad
 K\longmapsto\NCap(K\cap\overline O),\qquad
 K\longmapsto G_{B\setminus K}(x,y)
\end{equation}
are Borel,
and for every fixed finite \(\lambda\ge0\), the symmetric kernel
\begin{equation}\label{eq:joint-kernel-borel}
 H_\lambda(K,A):=
 \one_{\{K\cap A=\varnothing\}}e^{-\lambda m_B(K,A)}
\end{equation}
is jointly Borel on \(\K(\overline B)^2\).

\subsection{Recovery property and local nonpolarity}\label{sec:nonpolar}

We collect the potential-theoretic arguments needed for both LERW and
loop-catchers. A compact set is polar precisely when its Newtonian
capacity is zero \cite[Theorem~8.20]{MortersPeres}.
Let \(\mathscr T\) be the countable family of triples of rational
balls satisfying
\[
 \overline W\subset V,\qquad \overline V\subset U,
 \qquad \overline U\subset B.
\]
Call \(K\) \emph{locally nonpolar} if
\begin{equation}\label{eq:local-nonpolarity}
 K\cap W\ne\varnothing
 \quad\Longrightarrow\quad
 \NCap(K\cap\overline U)>0
 \qquad ((W,V,U)\in\mathscr T).
\end{equation}
Let \(\mathfrak C_B^{\mathrm{np}}\) consist of the sets in
\(\mathfrak C_B\) with empty interior that are locally nonpolar.
This is a standard Borel space: empty interior is tested by rational
balls, and \eqref{eq:local-nonpolarity} is a countable Borel condition
by \eqref{eq:borel-basic}.
We also write $\Gamma^+=\Gamma\cap\mathfrak C_B^{\mathrm{np}}$.

The next lemma states that the recovery property \eqref{eq:lerw-recovery}, and its general-intensity version \eqref{eq:looprecovery}, force local nonpolarity.
\begin{lemma}
\label{lem:no-polar-subarc}\label{lem:blc-nonpolarity}
Fix \(0\le\lambda<\infty\). For a random \(K\in\mathfrak C_B\),
the identities
\begin{equation}\label{eq:shared-recovery-transform}
 \mathbb E H_\lambda(K,A)
 =\mathbb P^0(W[0,\tau_B]\cap A=\varnothing),
 \qquad A\Subset B\text{ compact},
\end{equation}
are equivalent to
\(\mathcal D_\lambda(K)\stackrel d=W[0,\tau_B]\).
Either condition implies
\(K\in\mathfrak C_B^{\mathrm{np}}\) almost surely; if
\(K\in\Gamma\), it implies \(K\in\Gamma^+\) almost surely.
\end{lemma}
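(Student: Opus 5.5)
The lemma has two parts, and I would prove them in turn: first the equivalence between the transform identities and the distributional recovery, then local nonpolarity.

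\emph{Equivalence.} By \cref{lem:closed-decoration-avoidance}, for each compact \(A\Subset B\) and for \(K\) independent of the soup,
\[
 \mathbb P(\mathcal D_\lambda(K)\cap A=\varnothing)=\mathbb E H_\lambda(K,A).
\]
The case \(\lambda=0\) is trivial, since then \(\mathcal D_0(K)=K\). So \eqref{eq:shared-recovery-transform} says exactly that \(\mathcal D_\lambda(K)\) and \(W[0,\tau_B]\) have the same avoidance functional on compact sets inside \(B\). This gives the direction from recovery to \eqref{eq:shared-recovery-transform} at once.

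For the converse, I would use the Choquet theorem: the law of a random closed set is determined by its capacity functional on compacts. Here the sets live in \(\overline B\), whereas the test sets \(A\) lie in \(B\). I would handle this in three steps.
\begin{itemize}
\item[(a)] Avoidance events for compacts \(A\subset B\) determine the law of the random closed set \(\mathcal D_\lambda(K)\cap B\), viewed in the locally compact space \(B\).
\item[(b)] Both random sets are almost surely the closure of their trace in \(B\). For \(W[0,\tau_B]\) this holds because only the single exit point lies on \(\partial B\) and it is a limit of interior points. For \(\mathcal D_\lambda(K)\) it holds because \(K\in\mathfrak C_B\) meets \(\partial B\) in a single point, which is approached by the connected set \(K\), and Brownian loops in \(B\) lie in \(B\), so boundary points arise only as limits.
\item[(c)] Closure is a measurable map, so equality of the laws of the interior traces passes to the laws of the full sets.
\end{itemize}

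\emph{Local nonpolarity.} Fix \((W,V,U)\in\mathscr T\). Suppose that with positive probability \(K\cap W\ne\varnothing\) while \(\NCap(K\cap\overline U)=0\). On that event \(K\cap\overline U\) is polar, so no loop of the soup hits it. Hence the decoration near \(W\) must come from loops hitting \(K\setminus\overline U\), together with \(K\) itself. Those loops must travel from outside \(\overline U\) into \(W\), which is a positive distance away, and their mass is finite by \eqref{eq:separated-loop-finite}.

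To get a contradiction I would compare with the Brownian side via a test set. Pick a point \(x\in K\cap W\) and a small ball \(A\) around it, chosen from a countable family. The defect is that on this event \(\mathcal D_\lambda(K)\cap\overline V\) is a polar set together with finitely many macroscopic loop pieces. Brownian motion cannot do this: almost surely, whenever \(W[0,\tau_B]\) meets \(W\), the set \(W[0,\tau_B]\cap \overline U\) contains a nondegenerate path segment and has positive capacity. The capacity argument then runs as follows.
\begin{itemize}
\item On the bad event, consider \(\mathcal D_\lambda(K)\cap \overline V\) with the finitely many large loops removed. By the previous step this is a polar set.
\item Any point \(x\in K\cap W\) that is not on one of these loops would then sit in \(\mathcal D_\lambda(K)\) with a neighbourhood whose intersection with \(\mathcal D_\lambda(K)\) is polar, apart from loop traces not passing through \(x\).
\item In \(W[0,\tau_B]\), however, every point is an accumulation point of the path with positive capacity locally.
\end{itemize}
To turn this into an event that is measurable in the Hausdorff topology, I would phrase it as: there is a rational ball \(A\) such that \(\mathcal D_\lambda(K)\cap A\) is nonempty, polar, and separated from the loops. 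I expect this to be the main obstacle: making the ``polar piece plus loops'' decomposition into a Borel, law-determined event, and handling points of \(K\) lying on a loop. The fallback is to use the connectedness of \(K\) together with \(\lambda>0\). A connected set that is polar inside \(\overline U\) but meets \(W\) already contradicts connectivity unless it is a point, since nondegenerate continua in \(\mathbb R^3\) have positive Newtonian capacity; that case is excluded because \(K\) is connected and reaches \(\partial B\).

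\emph{The fallback settles the claim.} The component of \(K\cap\overline U\) containing \(x\in W\) reaches \(\partial U\) by boundary bumping. It is therefore a continuum of diameter at least \(\dist(W,\partial U)\), and its capacity is at least that of a continuum of that diameter, which is positive. So local nonpolarity holds deterministically for every \(K\in\mathfrak C_B\), and the probabilistic argument is unnecessary.

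\emph{Empty interior.} If \(K\) contained a rational ball \(A\) with positive probability, then \(\mathcal D_\lambda(K)\) would too. But \(W[0,\tau_B]\) almost surely has zero Lebesgue measure in three dimensions, so it contains no ball. Finally, for \(K\in\Gamma\), the two properties just proved give \(K\in\Gamma^+\) almost surely.
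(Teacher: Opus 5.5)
Your equivalence argument (Fact~\ref{lem:closed-decoration-avoidance}, Choquet's theorem on $B$, and recovery of both sets as closures of their traces in $B$) is fine and is what the paper does. So is your empty-interior argument.

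The local nonpolarity part has a genuine gap. Your ``fallback'' rests on the claim that every nondegenerate continuum in $\R^3$ has positive Newtonian capacity, and this is false. A straight segment is polar for three-dimensional Brownian motion: the projection of the motion onto the plane orthogonal to the segment is a planar Brownian motion, which does not hit points. More generally, any set of finite $\mathcal H^1$-measure has zero capacity for the kernel $|x-y|^{-1}$. Boundary bumping does give you a continuum in $K\cap\overline U$ of diameter at least $\dist(W,\partial U)$, but this gives no lower bound on its capacity. Hence local nonpolarity is not a deterministic property of $\mathfrak C_B$. For example, a simple polygonal arc from $0$ to a point of $\partial B$, lying in $B$ except for its endpoint, belongs to $\Gamma\subset\mathfrak C_B$ and is polar. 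The fact you are using is the planar one: nondegenerate continua have positive logarithmic capacity. In three dimensions local nonpolarity is exactly the substitute for it, and it must be extracted from the recovery identity.

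The probabilistic argument you started and then abandoned is the right one. The obstacle you anticipated disappears once you condition on the right event.

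Fix $(W,V,U)\in\mathscr T$ such that $K\cap W\neq\varnothing$ and $\NCap(K\cap\overline U)=0$ with positive probability. On this event the soup almost surely misses the polar set $K\cap\overline U$. So every attached loop that enters $V$ meets both $\overline V$ and $K\setminus U\subset\overline B\setminus U$. By \eqref{eq:separated-loop-finite}, conditionally on $K$, there is no such loop with probability at least $\exp\{-\lambda m_B(\overline V,\overline B\setminus U)\}>0$.

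When there is none, all attached loops lie in the closed set $V^c$, so $\mathcal D_\lambda(K)\cap\overline W=K\cap\overline W$. Thus the decoration meets the open ball $W$ of the triple, while its intersection with $\overline W$ is polar. This event is Borel by \eqref{eq:borel-basic} and has positive probability for $\mathcal D_\lambda(K)$. It has probability zero for the stopped Brownian range: whenever that range meets $W$, it contains there the range of a nondegenerate time interval, which has positive capacity. This contradicts $\mathcal D_\lambda(K)\stackrel d=W[0,\tau_B]$. You never need to split the decoration into a polar piece plus loops, or to treat points of $K$ lying on loops.
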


We omit the verification of the equivalence, which follows from Fact \ref{lem:closed-decoration-avoidance} and
\cite[Section~1.1]{Molchanov}, since both compact sets are the closures
of their interior traces.

\begin{proof}[Proof of local nonpolarity]
We give the local argument outlined in \cite[Section~1.5]{Cai26}.
First, \(K\subset\mathcal D_\lambda(K)\) and a three-dimensional
Brownian range has empty interior, so \(K\) has empty interior.
If local nonpolarity fails with positive probability, countability of
\(\mathscr T\) gives a fixed triple \((W,V,U)\) for which
\[
 K\cap W\ne\varnothing,\qquad \NCap(K\cap\overline U)=0
\]
with positive probability. Conditional on such a \(K\), Brownian loops
almost surely avoid the polar set \(K\cap\overline U\), by local
absolute continuity of bridges with respect to Brownian motion.
Consequently every attached loop entering \(V\) also hits
\(K\setminus U\), and with conditional probability at least
\[
 \exp\{-\lambda m_B(\overline V,\overline B\setminus U)\}>0
\]
no such loop occurs, by \eqref{eq:separated-loop-finite}.
On this event the decoration meets \(W\), but its intersection with
\(\overline W\) is polar.
This is impossible for a stopped Brownian range: whenever it meets the
open set \(W\), it contains there the range of a nondegenerate time
interval, which has dimension two and positive Newtonian capacity
\cite[Theorems~4.29(b) and~4.32]{MortersPeres}.
\end{proof}

\subsection{Green coordinates and identification of laws}
\label{sec:injectivity}\label{sec:moments}

Fix \(\mathcal Z=\Q^3\cap B\), and set
\(\mathcal Z^{(2)}=\{(x,y)\in\mathcal Z^2:x\ne y\}\).
For \(K\in\K(\overline B)\), define
\begin{equation}\label{eq:green-coordinate}
 X_{x,y}(K)=\frac{G_{B\setminus K}(x,y)}{G_B(x,y)}\in[0,1],
 \qquad (x,y)\in\mathcal Z^{(2)},
\end{equation}
and the Borel map
\begin{equation}\label{eq:Phi-map}
 \Phi(K)=\bigl(X_{x,y}(K)\bigr)_{(x,y)\in\mathcal Z^{(2)}}.
\end{equation}

The next proposition says that for locally nonpolar sets, the Green coordinates determine the set. We omit the proof, which follows \cite[Lemma~4.4]{Cai26}, with local
nonpolarity providing positive Newtonian capacity in place of the
positive logarithmic capacity of a nondegenerate planar continuum, as
outlined in \cite[Section~1.5]{Cai26}.

\begin{proposition}
\label{prop:green-injective}\label{prop:blc-green-injective}
The map \(\Phi\) is injective on \(\mathfrak C_B^{\mathrm{np}}\),
and in particular on \(\Gamma^+\).
\end{proposition}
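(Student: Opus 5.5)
The plan is to show that for \(K\in\mathfrak C_B^{\mathrm{np}}\) the set \(K\) can be read off from \(\Phi(K)\). Concretely, we aim to prove that for \(z\in B\),
\[
 z\notin K\iff \exists\,(x,y)\in\mathcal Z^{(2)}\text{ near }z\text{ with }X_{x,y}(K)>0 \text{ and } X\text{ stable in a suitable sense}.
\]
Since \(K\) is closed, it suffices to show that \(B\setminus K\) is determined by \(\Phi(K)\). Together with \(K\cap\partial B\), this gives \(K=\overline{K\cap B}\). Connectedness and the fact that \(K\cap\partial B\) is a single boundary point reached by a limit of interior points handle \(\partial B\): \(K\) is the closure of \(K\cap B\) unless \(K\subset\partial B\), and that is impossible since \(0\in K\).

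Step 1: show that if \(x,y\in\mathcal Z\setminus K\) lie in the same component of \(B\setminus K\), then \(X_{x,y}(K)>0\). Otherwise \(X_{x,y}(K)=0\), by the convention on \(G_{B\setminus K}\). Hence \(\Phi\) determines the relation ``same component of \(B\setminus K\)'' on rational points off \(K\).

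Step 2: a rational point \(x\) lies in \(K\) iff \(X_{x,y}(K)=0\) for all \(y\). The hard direction is the converse. For \(x\notin K\), the open complement contains a ball around \(x\), which contains a rational \(y\ne x\) in the same component. So we can detect \(\mathcal Z\setminus K\), and hence the open set \(O(K)=\bigcup\{\text{components of }B\setminus K\text{ containing rational points}\}=B\setminus K\).

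The subtle point is that rationals avoiding \(K\) already determine \(B\setminus K\). This works because \(B\setminus K\) is open and the rationals are dense. So \(B\setminus K=\bigcup_{x\in\mathcal Z\setminus K}B(x,\dist(x,K))\). It remains to recover \(\dist(x,K)\), or at least the balls \(B(x,r)\subset B\setminus K\), from \(\Phi\).

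Here local nonpolarity enters, and this is the expected obstacle. Take a rational ball \(W\) with \(\overline W\subset B\). We claim \(W\cap K=\varnothing\) iff all rational pairs in \(W\) have Green coordinate positive and the coordinates equal those of \(K\) with \(W\) removed. More usefully, we use capacity through harmonic measure:
\[
 1-X_{x,y}(K)=\frac{G_B(x,y)-G_{B\setminus K}(x,y)}{G_B(x,y)}=\frac{\mathbb E^x[G_B(W_{\tau_K},y);\tau_K<\tau_B]}{G_B(x,y)}.
\]
If \(K\cap W\ne\varnothing\), then local nonpolarity gives \(\NCap(K\cap\overline U)>0\) for every triple \((W,V,U)\in\mathscr T\). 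So a Brownian path from \(x\in W\setminus K\) hits \(K\cap\overline U\) with probability bounded below. Letting \(U\) shrink toward a point \(z\in K\cap W\), we choose rational \(x,y\) close to \(z\) in the same component, with \(|x-y|\) small compared with the shrinking scale. The quantity \(1-X_{x,y}(K)\) then does not vanish along such sequences. By contrast, if \(\overline W\cap K=\varnothing\), then for \(x,y\in W\) with \(|x-y|\to0\) the singularity of \(G_B(x,y)\) dominates, and \(X_{x,y}(K)\to1\). Thus
\[
 \overline W\cap K=\varnothing\ \Longrightarrow\ \lim_{x,y\in W,\,|x-y|\to0}X_{x,y}(K)=1,
\]
while \(K\cap W\neq\varnothing\) with local nonpolarity gives a sequence along which \(\limsup(1-X)>0\).

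To make the second claim rigorous, pick \(z\in K\cap W\) and approach it. Use a scale-invariant Harnack/capacity estimate: the hitting probability of a nonpolar compact piece from distance comparable to its size is positive, though not uniform. This nonuniformity is the main difficulty. I would handle it by fixing one nonpolar piece \(K\cap\overline U\) and taking \(x,y\) at a fixed small distance from it. The criterion is then ``\(\inf\) over admissible pairs in \(W\)'', compared with the value computed for the set with \(W\) deleted, rather than a pure limit.

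Altogether, the set \(\{W:\overline W\cap K=\varnothing\}\) is a Borel function of \(\Phi(K)\) up to boundary cases. Hence \(\Phi(K)=\Phi(K')\) forces \(B\setminus K=B\setminus K'\), and therefore \(K=K'\).
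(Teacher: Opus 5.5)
There is a genuine gap: you never arrive at a test that is computable from \(\Phi(K)\) and also separates \(W\cap K\ne\varnothing\) from \(\overline W\cap K=\varnothing\).

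\emph{Why the proposed criteria fail.} Step~2 only recovers \(\mathcal Z\cap K\), which is typically empty. So the conclusion ``hence \(O(K)=B\setminus K\)'' does not follow, as you then concede. The criterion you finally settle on, comparison ``with the value computed for the set with \(W\) deleted'', is circular: \(G_{B\setminus(K\setminus W)}\) cannot be computed without knowing \(K\). Positivity of \(1-X_{x,y}(K)\) for pairs at a fixed distance tests nothing, because the rest of \(K\) makes it positive whether or not \(W\) meets \(K\). Your limit criterion (\(X_{x,y}(K)\to1\) as \(|x-y|\to0\) in \(W\)) is computable, and its forward direction is right. Your argument for the converse goes the wrong way, however. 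Since
\[
 G_B(x,y)-G_{B\setminus K}(x,y)\le\sup_{w\in K}G_B(w,y)\lesssim\dist(y,K)^{-1},
\]
taking \(|x-y|\) small compared with the distance of the pair to \(K\) forces \(X_{x,y}(K)\to1\). To break the limit you need \(x\) much closer to \(K\) than to \(y\), near a point of \(K\) from which nearby Brownian motion hits \(K\) with probability close to one. Positive capacity of \(K\cap\overline U\) gives only positive, nonuniform hitting probabilities. That is exactly where your argument stalls.

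\emph{The missing ingredient.} You need the Kellogg--Evans theorem: the points of a compact set that are irregular for it form a polar set.
\begin{itemize}
\item For \(z\in K\cap B\), local nonpolarity then gives, in every small rational ball \(\overline U\) around \(z\), a point \(z'\in K\cap\overline U\) regular for \(K\). So regular points are dense in \(K\cap B\).
\item At such a \(z'\), \(G_{B\setminus K}(x,y)\to0\) as \(y\to z'\), for each fixed \(x\in B\setminus K\), while \(G_B(x,y)\) stays bounded below.
\end{itemize}
This yields a test with one endpoint fixed:
\[
 z\in B\setminus K\iff\exists\,x\in\mathcal Z,\ r\in\Q_{>0}:\ \inf_{y\in\mathcal Z\cap B(z,r)}X_{x,y}(K)>0.
\]
If \(z\notin K\), take \(\overline{B(z,3r)}\subset B\setminus K\) and a rational \(x\in B(z,3r)\setminus\overline{B(z,2r)}\). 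Then \(G_{B\setminus K}(x,\cdot)\) is continuous and positive on \(\overline{B(z,r)}\), and \(G_B(x,\cdot)\) is bounded there. If \(z\in K\) and \(x\in K\), the coordinates vanish identically. If \(z\in K\) and \(x\notin K\), they tend to zero along rational \(y\) approaching a regular point of \(K\) inside \(B(z,r)\). Together with \(K=\overline{K\cap B}\), which follows from connectedness as you say, this determines \(K\) from \(\Phi(K)\).

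This conversion of positive capacity into vanishing of Green functions on a dense subset of \(K\) is where local nonpolarity has to enter. The paper only cites \cite[Lemma~4.4]{Cai26}, where positive logarithmic capacity of a planar continuum plays the corresponding role.
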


\begin{remark}[Measurable reconstruction]\label{rem:borel-inverse}
The Lusin--Suslin theorem states that an injective Borel map between
standard Borel spaces has Borel image and a Borel inverse on its
image; see \cite[Theorem~15.1 and Corollary~15.2]{Kechris}. Applied to \(\Phi\) on
\(\mathfrak C_B^{\mathrm{np}}\), it therefore implies that the law
of the coordinate vector \(\Phi(K)\) determines the law of \(K\)
as a compact set. This is the only measurable reconstruction result
needed below.
\end{remark}

The following lemma identifies the law of the random compact set once probes realizing products of Green coordinates are available.
\begin{lemma}
\label{lem:mixed-moments}\label{lem:blc-mixed-moments}
Fix \(0\le\lambda<\infty\). Suppose that, for each finite list
\((x_i,y_i)_{i\le r}\subset\mathcal Z^{(2)}\), with repetitions
allowed, there is a random compact \(C\Subset B\) such that
\begin{equation}\label{eq:shared-probe-hypothesis}
 \mathbb E_C H_\lambda(C,A)
 =\prod_{i=1}^rX_{x_i,y_i}(A),
 \qquad A\subset\overline B\text{ compact}.
\end{equation}
Then at most one law on \(\mathfrak C_B\) satisfies
\eqref{eq:shared-recovery-transform}. In particular, any two such
laws \(\mu_1,\mu_2\) have the same mixed Green function moments:
\begin{equation}\label{eq:mixed-moment-equality}
 \mathbb E_{\mu_1}\prod_{i=1}^rX_{x_i,y_i}(K)
 =\mathbb E_{\mu_2}\prod_{i=1}^rX_{x_i,y_i}(K).
\end{equation}
\end{lemma}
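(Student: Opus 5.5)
Take $K\sim\mu_j$ satisfying \eqref{eq:shared-recovery-transform} and an independent probe $C$ with \eqref{eq:shared-probe-hypothesis}, and compute $\mathbb E\,H_\lambda(K,C)$ in two orders, using that $H_\lambda$ is symmetric and jointly Borel by \eqref{eq:joint-kernel-borel}, so Fubini applies.

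First condition on $K$. Since $K\subset\overline B$ is compact, \eqref{eq:shared-probe-hypothesis} with $A=K$ gives
\[
 \mathbb E\bigl[H_\lambda(C,K)\mid K\bigr]=\prod_{i=1}^rX_{x_i,y_i}(K),
\]
so $\mathbb E\,H_\lambda(K,C)=\mathbb E_{\mu_j}\prod_iX_{x_i,y_i}(K)$. Next condition on $C$. Because $C\Subset B$ is compact, \eqref{eq:shared-recovery-transform} applies with $A=C$ and gives
\[
 \mathbb E\bigl[H_\lambda(K,C)\mid C\bigr]=\mathbb P^0\bigl(W[0,\tau_B]\cap C=\varnothing\bigr).
\]
Here $W$ is independent of $C$. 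The resulting quantity $\mathbb E_C\,\mathbb P^0(W[0,\tau_B]\cap C=\varnothing)$ depends only on the probe law and not on $j$. This yields \eqref{eq:mixed-moment-equality}.

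The one point to check is that the probe may be taken with $C\Subset B$ almost surely, which the hypothesis states. If $C$ could touch $\partial B$, one would approximate $C$ by $C\cap\overline{B_\rho}$ with $B_\rho\Subset B$ and use monotone convergence.

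To identify the law itself, use that bounded Borel $[0,1]$-valued coordinates are determined by their mixed moments. Products of coordinates over finite lists with repetitions are exactly the monomials in the countable family $(X_{x,y})$. Since $[0,1]^r$ is compact, polynomials are dense in its continuous functions, so each finite-dimensional marginal of $\Phi(K)$ is determined. By Kolmogorov consistency on the countable product $[0,1]^{\mathcal Z^{(2)}}$, the law of $\Phi(K)$ is then determined.

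Finally, \cref{lem:no-polar-subarc} gives $K\in\mathfrak C_B^{\mathrm{np}}$ almost surely under each $\mu_j$. By \cref{prop:green-injective}, $\Phi$ is injective there, and \cref{rem:borel-inverse} supplies a Borel inverse on the image. Hence $\mu_j=(\Phi^{-1})_*(\text{law of }\Phi(K))$ coincide. No step is a genuine obstacle; the only care needed is measurability and the $\Subset$ condition for applying \eqref{eq:shared-recovery-transform} at $A=C$.
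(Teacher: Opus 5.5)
Your proposal is correct and matches the argument the paper invokes by reference to Cai's Lemma~4.3 and Theorem~1.5. That argument is Fubini for the symmetric, jointly Borel kernel $H_\lambda$ with an independent probe, taking $A=K$ in the probe hypothesis and $A=C\Subset B$ in the recovery transform, followed by moment determinacy of the $[0,1]$-valued Green coordinates, then \cref{lem:no-polar-subarc}, \cref{prop:green-injective} and the Lusin--Suslin inverse of \cref{rem:borel-inverse}. Two minor remarks: determining the law on $[0,1]^{\mathcal Z^{(2)}}$ from its finite-dimensional marginals is a $\pi$--$\lambda$ uniqueness argument rather than Kolmogorov consistency, and your aside about probes touching $\partial B$ is unnecessary because the hypothesis already gives $C\Subset B$.
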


We omit the proof because, under the stated probe hypothesis, the
Fubini and moment-determinacy argument of \cite[Lemma~4.3 and the proof
of Theorem~1.5]{Cai26} applies once
\cref{lem:no-polar-subarc,prop:green-injective,rem:borel-inverse} are
available.

\subsection{The finite interior probe}\label{sec:finite}
In this subsection, we explain how entangled multipath loop erasure provides the probes that turn the recovery transform \eqref{eq:recovery-transform} into products of Green functions.

We use the entangled multipath LERW and its recovery property \eqref{eq:finite-transform} from
\cite[Definition~2.1, Proposition~2.2 and (2.6)]{Cai26}.
We recall the construction and the identities needed below; their
finite-graph proofs are contained in that reference.

  Let
\(V\) be a finite set and let \(Q=(Q_{xy})_{x,y\in V}\) be symmetric,
nonnegative, and substochastic with \(\rho(Q)<1\).  For \(R\subset V\), put
\begin{equation*}
 \mathcal G_R=(I-Q_R)^{-1},\qquad
 d_R=\det(I-Q_R),\qquad d_\varnothing=1.
\end{equation*}
We set \(\mathcal G_R(a,b)=0\) whenever \(a\notin R\) or \(b\notin R\).
For a walk or directed loop \(\xi\), let \(q(\xi)\) be its edge-weight
product and \(\supp(\xi)\) its vertex set, with empty product one.

For a directed walk \(\omega\), chronological loop erasure produces its
\emph{LERW record}
\begin{equation*}
 P(\omega)=(\eta;\ell_1,\ldots,\ell_m),\qquad \eta=\LE(\omega),
\end{equation*}
where the \(\ell_j\)'s are the directed simple loops in erasure order;
loops of length one are included.  Directed loops are identified under
cyclic shifts, but not under reversal, and repeated copies are distinguished.
Records are taken modulo adjacent swaps of erased loops having disjoint
supports.  On the erased loops and the terminal path \(\eta\), let
\(\prec\) be the transitive closure of
\begin{equation*}
 \ell_i\prec\ell_j
 \quad\text{if }i<j\text{ and }
 \supp(\ell_i)\cap\supp(\ell_j)\ne\varnothing,
 \qquad
 \ell_i\prec\eta
 \quad\text{if }\supp(\ell_i)\cap\supp(\eta)\ne\varnothing.
\end{equation*}
Following \cite[Section~2.1]{Cai26}, \(\prec\) is the \emph{entangled
order}; it is unchanged by the allowed swaps.  A record is legal if every
erased loop \(\ell\) satisfies \(\ell\prec\eta\).  Chronological loop
erasure is a weight-preserving bijection between directed walks and legal
records.  Put
\begin{equation*}
 q(P)=q(\eta)\prod_{j=1}^m q(\ell_j),\qquad
 \supp(P)=\supp(\eta)\cup\bigcup_{j=1}^m\supp(\ell_j).
\end{equation*}

For \(A\subset V\), extend the entangled order by declaring
\(A\prec\ell_j\) when \(A\cap\supp(\ell_j)\ne\varnothing\), and again take
the transitive closure.  Inspect the loops in erasure order, starting from
\(A_0=A\): keep \(\ell_j\) and set
\(A_j=A_{j-1}\cup\supp(\ell_j)\) when
\(\supp(\ell_j)\cap A_{j-1}\ne\varnothing\), and otherwise discard it and
put \(A_j=A_{j-1}\).  The record keeping \(\eta\) and precisely the loops
\(\ell\) for which \(A\prec\ell\) is denoted by \(\Phi_A(P)\).  This is the
\emph{reduction of the LERW record} relative to \(A\), and is independent of
the representative of the record.  Let \(\mathcal R_A^R(a,b)\) be the set
of legal records from \(a\) to \(b\) in \(R\) satisfying
\(\Phi_A(P)=P\).

Fix ordered pairs \((a_i,b_i)\), \(1\le i\le r\), with \(a_i\ne b_i\).
Repeated pairs are allowed.  Starting with \(C_0=\varnothing\), sample
independent normalized random-walk bridges \(\omega_i:a_i\to b_i\), assuming
\(\mathcal G_V(a_i,b_i)>0\) for every \(i\), with law
\begin{equation*}
 \mathbb P(\omega_i=\omega)
 =\frac{q(\omega)}{\mathcal G_V(a_i,b_i)},
 \qquad \omega:a_i\to b_i\text{ in }V.
\end{equation*}
Set
\begin{equation}\label{eq:entangled-construction}
 H_i=P(\omega_i),\qquad
 P_i=\Phi_{C_{i-1}}(H_i),\qquad
 C_i=C_{i-1}\cup\supp(P_i).
\end{equation}

Let \(\gamma_i\) be the directed path corresponding to the legal
record \(P_i\).  In the terminology of \cite[Definition~2.1]{Cai26},
\((\gamma_1,\ldots,\gamma_r)\) is the \emph{entangled multipath LERW}
and \(C_r=\bigcup_i\supp(\gamma_i)\) is its range.  For \(r=1\),
\(\gamma_1=\LE(\omega_1)\).

Let \(\mu_V^{\mathrm{loop}}\) be the push-forward under cyclic re-rooting of
the measure assigning a rooted loop \((z_0,\ldots,z_n=z_0)\) weight
\(n^{-1}q(\ell)\), and put
\begin{equation*}
 m_V(A,E)=\mu_V^{\mathrm{loop}}
 \{\ell:\supp\ell\cap A\ne\varnothing,\ \supp\ell\cap E\ne\varnothing\}.
\end{equation*}
For an independent intensity-one random-walk loop soup \(\mathcal L_V^1\),
define
\begin{equation*}
 \mathcal D_1^V(A)
 =A\cup\bigcup_{\ell\in\mathcal L_V^1:\,\supp\ell\cap A\ne\varnothing}
          \supp\ell.
\end{equation*}

The next proposition shows that the finite recovery property \eqref{eq:finite-transform} expresses the avoidance transform of the entangled range as a product of Green-function ratios. We omit the proof, since the identities are precisely \cite[Proposition~2.2 and (2.6)]{Cai26}, with the
ambient set denoted here by \(V\) and the allowed subset by \(R\).
\begin{proposition}\label{prop:finite-probe}
For every \(R\subset V\),
\begin{equation}\label{eq:finite-transform}
 \mathbb E\!\left[
   \one_{\{C_r\subset R\}}e^{-m_V(C_r,V\setminus R)}
 \right]
 =\prod_{i=1}^r\frac{\mathcal G_R(a_i,b_i)}{\mathcal G_V(a_i,b_i)}.
\end{equation}
Equivalently,
\begin{equation}\label{eq:finite-decoration-law}
 \mathcal D_1^V(C_r)\ \stackrel d=\
 \bigcup_{i=1}^r\supp(\omega_i).
\end{equation}
\end{proposition}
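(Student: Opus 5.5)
The plan is to prove \eqref{eq:finite-transform} directly from the record bijection, by summing over legal records, and then deduce \eqref{eq:finite-decoration-law} from it.

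\textbf{Reduction to one path.} Argue by induction on \(r\), conditioning on \(C_{r-1}\). The \(i\)-th step depends on the past only through \(C_{i-1}\), so it suffices to prove a one-path conditional identity. For fixed \(A\subset V\) and the walk \(\omega:a\to b\), the claim is
\begin{equation*}
 \sum_{\omega:a\to b}q(\omega)\,
 \one_{\{\supp\Phi_A(P(\omega))\subset R\}}\,
 e^{-m_V(\supp\Phi_A(P(\omega)),V\setminus R)}
 \cdot(\text{correction in }A)
 =\mathcal G_R(a,b)\times(\cdots).
\end{equation*}
The clean way to organize this is to pair the exponential factor with an independent loop soup, as in \eqref{eq:finite-decoration-law}. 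Then \(e^{-m_V(C,V\setminus R)}\) is the probability that no soup loop touching \(C\) exits \(R\).

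\textbf{Key step.} Write each walk, via the bijection, as a record \(P\). Split its erased loops into the kept ones (\(A\prec\ell\)) and the discarded ones. Independently, split the soup loops into those entangled with \(A\cup\supp P\) and the rest.

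The kept part plus the soup loops hitting it should reassemble, by a Poisson/loop-measure resampling argument, into a walk from \(a\) to \(b\) together with its \(A\)-entangled loops. The identity to establish is \(\sum_{\omega\subset R}q(\omega)=\mathcal G_R(a,b)\). It will rest on two classical facts:
\begin{itemize}
\item[] the Le Jan/Lawler identity \(\mathcal G_R(a,b)=\sum_{\eta}q(\eta)\,d_{R\setminus\eta}/d_R\) over simple paths \(\eta\subset R\);
\item[] \(d_R^{-1}=\exp\bigl(\mu^{\mathrm{loop}}\{\ell\subset R\}\bigr)\).
\end{itemize}
Combining these, the contribution from loops touching \(\eta\cup A\) inside \(R\), versus inside \(V\), produces exactly \(e^{-m_V(\cdot,V\setminus R)}\). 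Discarded loops are not entangled with \(A\) or the kept part, so they factor out and cancel between numerator and denominator.

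\textbf{Assembling.} Multiply the \(r\) conditional factors to get \eqref{eq:finite-transform}. For \eqref{eq:finite-decoration-law}, note that both sides of \eqref{eq:finite-transform} are avoidance functionals \(\mathbb P(\cdot\subset R)\) for all \(R\). By the finite Poisson formula, the left side is \(\mathbb P(\mathcal D_1^V(C_r)\subset R)\) and the right side is \(\mathbb P(\bigcup_i\supp\omega_i\subset R)\). On a finite set these avoidance probabilities determine the law of a random subset by inclusion–exclusion (Möbius inversion).

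\textbf{Main obstacle.} The hard step is the one-path identity with a nonempty entangling set \(A\): showing that the reduction \(\Phi_A\) commutes with soup resampling. The difficulty is proving that the kept loops together with the soup loops hitting them are distributed exactly as the \(A\)-relevant loops of a bridge. I would first try to prove this via a loop-soup decomposition in the style of the Lawler–Trujillo Ferreras coupling, ordering the soup loops by first visit along \(\eta\) and along \(A\).
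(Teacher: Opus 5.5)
\textbf{Verdict: genuine gap.} Your outer structure is sound. Since \(P_i\) depends on the past only through \(C_{i-1}\), conditioning on \(C_{r-1}=A\) reduces \eqref{eq:finite-transform} to a one-path identity. The deduction of \eqref{eq:finite-decoration-law} via the Poisson formula and M\"obius inversion of \(R\mapsto\mathbb P(X\subset R)\) is also correct.

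But that one-path identity is the whole content of the proposition, and you neither state it precisely (your display has placeholders) nor prove it (the ``main obstacle'' is left open). Note that \(m_V(A\cup S,V\setminus R)-m_V(A,V\setminus R)\) is the mass of loops avoiding \(A\), meeting \(S\) and leaving \(R\). So, with \(S(\omega)=\supp\Phi_A(P(\omega))\), what is needed is
\[
 \sum_{\omega:a\to b}q(\omega)\one_{\{S(\omega)\subset R\}}
 \exp\bigl(-\mu_V^{\mathrm{loop}}\{\ell:\ell\cap A=\varnothing,\ \ell\cap S(\omega)\ne\varnothing,\ \ell\cap(V\setminus R)\ne\varnothing\}\bigr)
 =\mathcal G_R(a,b).
\]

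Your heuristic for this fails at the decisive point. First, discarded loops are not disentangled from the kept part. For the walk \(a,y,a,y,x,y,b\) with \(A=\{x\}\), the erased loop \((a,y,a)\) is discarded, the later loop \((y,x,y)\) is kept, and they share \(y\). In general, discarded loops may lie below kept loops and reach \(\eta\) only through them. Second, discarded loops do not cancel. For \(A=\varnothing\) every erased loop is discarded, and it is exactly their total weight \(d_{V\setminus\eta}/d_V\), compared between \(V\) and \(R\), that produces \(e^{-m_V(\eta,V\setminus R)}\). What cancels is \(q\) of the kept record. Likewise, in your resampling picture the soup loops must stand in for the discarded loops (those avoiding \(A\)), not for the \(A\)-entangled ones. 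The two classical facts you list only handle \(A=\varnothing\).

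\textbf{The missing ingredient} is an exact formula for the law of the reduced record. Fix \(P'\) with \(\Phi_A(P')=P'\) and put \(S'=\supp P'\).

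\emph{Factorization.} A legal record \(P\) has \(\Phi_A(P)=P'\) if and only if \(P=D\cdot P'\), where:
\begin{enumerate}[label=(\roman*)]
\item \(D\) is a heap (record modulo the allowed swaps) of simple cycles in \(V\setminus A\);
\item \(D\) is stacked below \(P'\);
\item each maximal cycle of \(D\) meets \(S'\).
\end{enumerate}
For the forward direction: the discarded cycles form a down-set avoiding \(A\), none lies above a kept cycle, and legality forces the maximal ones to touch \(S'\). Conversely, no cycle of such a \(D\) lies above \(A\).

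\emph{Summing over \(D\).} Viennot's inversion lemma for heaps of cycles, together with the cycle expansion of \(\det(I-Q)\), gives
\[
 \sum_D q(D)=\frac{d_{V\setminus(A\cup S')}}{d_{V\setminus A}}
 =\exp\bigl(\mu_V^{\mathrm{loop}}\{\ell:\ell\cap A=\varnothing,\ \ell\cap S'\ne\varnothing\}\bigr).
\]
Equivalently, use a Le Jan--Lawler ordered decomposition along an enumeration of \(S'\setminus A\), not along \(\eta\) and \(A\). Hence
\[
 \mathbb P\bigl(\Phi_A(P(\omega))=P'\bigr)
 =\frac{q(P')\,d_{V\setminus(A\cup S')}}{d_{V\setminus A}\,\mathcal G_V(a,b)}.
\]

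\emph{Comparing \(R\) and \(V\).} The same decomposition applied to walks in \(R\) gives
\[
 \mathcal G_R(a,b)=\sum_{P'\subset R}q(P')\,\frac{d_{R\setminus(A\cup S')}}{d_{R\setminus A}}.
\]
The ratio of the \(R\)- and \(V\)-exponentials is exactly the factor in the first display. This proves the one-path identity, and your induction then closes.

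The paper itself does not reprove this; it defers the finite computation to \cite[Proposition~2.2 and (2.6)]{Cai26}.
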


\subsection{Convergence of normalized bridges}\label{sec:bridge-convergence}

Let \(D\subset\R^3\) be a bounded Lipschitz domain. For a finite list of
ordered pairs \((x_i,y_i)\) of distinct points in the same component
of \(D\), take independent normalized random-walk bridges
\(\omega_i^\delta\) between lattice approximations and put
\(C^{0,\delta}=\bigcup_i\tr(\omega_i^\delta)\).
The following is the three-dimensional counterpart of
\cite[Lemma~3.1 and Corollary~3.2]{Cai26}, with convergence strengthened
to the variable-duration uniform topology; it is used for both LERW and
loop-catcher probes.

\begin{lemma}
\label{lem:green-path-limit}
The interpolated paths \((\omega_i^\delta)_{i\le r}\) converge jointly to
independent normalized Brownian bridges
\((\Gamma_D^{x_i,y_i})_{i\le r}\).  In
particular,
\begin{equation}\label{eq:raw-union-limit}
 C^{0,\delta}\Rightarrow
 C^0:=\bigcup_{i=1}^r\tr(\Gamma_D^{x_i,y_i})
\end{equation}
in Hausdorff distance, and \(C^0\Subset D\) almost surely.
\end{lemma}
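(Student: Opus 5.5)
Since the bridges are independent both at the discrete and at the continuum level, joint convergence follows from convergence of each marginal. So I fix one pair \((x,y)=(x_i,y_i)\) and prove that the interpolated normalized bridge \(\omega^\delta\) from \(x^\delta\) to \(y^\delta\) converges to \(\Gamma_D^{x,y}\) in the variable-duration uniform topology. The starting point is the mixture representation. By definition, the law of \(\omega^\delta\) is a mixture over lengths \(n\): the length \(n\) has weight \(Q^{\delta,n}(x^\delta,y^\delta)/\mathcal G_{D^\delta}(x^\delta,y^\delta)\), and conditionally on \(n\) the path is the killed \(n\)-step walk bridge. The continuum object \eqref{eq:green-bridge} has exactly the same structure, with duration density \(p_D(t,x,y)/G_D(x,y)\). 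The proof therefore splits into two parts: convergence of the duration law, and convergence of the bridges at a fixed duration.

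\emph{Duration law.} Write \(t_n=n\delta^2/3\). By \eqref{eq:killed-lclt} and \eqref{eq:green-normalization}, along admissible parities
\[
 \frac{Q^{\delta,n}(x^\delta,y^\delta)}{\mathcal G_{D^\delta}(x^\delta,y^\delta)}
 =\frac{2\delta^3\,p_D(t_n,x,y)(1+o(1))}{3\delta\, G_D(x,y)(1+o(1))}
 =\frac{2\delta^2}{3}\,\frac{p_D(t_n,x,y)}{G_D(x,y)}\,(1+o(1)),
\]
locally uniformly in \(t_n\in[\eta,\eta^{-1}]\). Since admissible times have mesh \(2\delta^2/3\), this is a Riemann sum for the continuum duration density. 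Tightness at both ends comes from the standing estimates. For short durations, \eqref{eq:lclt} together with \(|x-y|>0\) gives total weight \(\lesssim e^{-c/\eta}\) on \(t_n<\eta\). For long durations, \eqref{eq:discrete-spectral-bound} gives weight \(\lesssim e^{-c/\eta}\) on \(t_n>\eta^{-1}\); here I divide by \(\mathcal G\ge c\delta\) from \eqref{eq:green-3d-bounds}. Hence the law of \(t_n\) converges weakly to the law with density \(p_D(t,x,y)/G_D(x,y)\).

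\emph{Fixed-duration bridges.} Now condition on \(t_n\to t\in[\eta,\eta^{-1}]\); I need the killed walk bridge to converge to \(\mathbf P^{D,t}_{x,y}\). For finite-dimensional distributions, I apply the Markov property: the joint density of intermediate positions is a product of killed kernels divided by \(Q^{\delta,n}(x^\delta,y^\delta)\). Each factor converges by \eqref{eq:killed-lclt} on compact subsets of \(D\), and mass near \(\partial D\) is controlled by \eqref{eq:boundary-escape} and the Gaussian bounds. Tightness is proved on \([0,t/2]\): there the bridge is absolutely continuous with respect to the killed walk, with density \(Q^{\delta,n-m}(S_m,y^\delta)/Q^{\delta,n}(x^\delta,y^\delta)\). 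This density is bounded uniformly for \(S_m\) away from \(y\) by \eqref{eq:lclt}, and near \(y\) it is handled by a further half-time split. Donsker's invariance principle for the killed walk then gives tightness on the first half, and time reversal, using the symmetry of \(Q^\delta\), gives it on the second half. I expect this fixed-duration step to be the main obstacle, specifically the uniform control of the Radon--Nikodym factor near the endpoint and near the boundary for a Lipschitz \(D\); the plan is to rely on the half-time split and \eqref{eq:boundary-escape}.

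Combining the duration convergence with the conditional convergence, which is locally uniform in \(t\), gives convergence in \((0,\infty)\times C([0,1],J_D)\). The trace map is continuous from this space into the Hausdorff space, and a finite union of traces is continuous as well, so \eqref{eq:raw-union-limit} follows. Finally, \(C^0\Subset D\) almost surely: each killed Brownian bridge has continuous paths staying in \(D\) on \([0,t]\), with endpoints in \(D\), so its trace is a compact subset of \(D\); and the union of finitely many such traces is again a compact subset of \(D\).
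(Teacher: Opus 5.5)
Your proposal is correct and follows essentially the same route as the paper: the length mixture with weights $Q^{\delta,n}/\mathcal G_{D^\delta}$, Riemann-sum convergence of the duration law via \eqref{eq:killed-lclt}--\eqref{eq:green-normalization} with Gaussian and spectral tail bounds, and, on a fixed duration window, finite-dimensional convergence plus tightness from a bounded density on the first half, Donsker and reversibility; you then conclude with continuity of the trace map and independence. The step you flag as the main obstacle is in fact easy, as in the paper: for $k\le n/2$ one has $n-k\ge n/2$, so \eqref{eq:lclt} gives $Q^{\delta,n-k}(S_k,y^\delta)\le C\delta^3$ uniformly in $S_k$ (the killed kernel is dominated by the free one, so neither proximity to $y$ nor to $\partial D$ matters), while \eqref{eq:killed-lclt} gives $Q^{\delta,n}(x^\delta,y^\delta)\ge c\delta^3$; hence no further half-time split or boundary estimate is needed.
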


\begin{proof}
We give only the additional points needed for path convergence and the
three-dimensional normalization. For one pair of endpoints, let \(N_\omega\) be the number of steps of the bridge. Then
\begin{equation}\label{eq:discrete-duration-law}
 \mathbb P_{D^\delta}^{x_\delta,y_\delta}(N_\omega=n)
 =\frac{Q^{\delta,n}(x_\delta,y_\delta)}
        {\mathcal G_{D^\delta}(x_\delta,y_\delta)}.
\end{equation}
On a fixed window \(n\delta^2/3\in[t_0,T]\), the killed local limit
gives convergence of bridge finite-dimensional distributions.
The bridge density relative to walk through time \(k\le n/2\) is
\[
 \frac{Q^{\delta,n-k}(S_k,y_\delta)}
      {Q^{\delta,n}(x_\delta,y_\delta)}\le C_{t_0,T},
\]
by \eqref{eq:lclt} and \eqref{eq:killed-lclt}. Tightness from Donsker's invariance principle and reversibility gives tightness in the uniform topology 
on both halves. Thus, on each fixed duration window, 
the laws of the fixed-duration bridges converge 
weakly in the uniform topology.

By \eqref{eq:killed-lclt} and \eqref{eq:green-normalization}, the duration law
converges to \(p_D(t,x,y)\,\dd t/G_D(x,y)\), since admissible times
have mesh \(2\delta^2/3\). The Gaussian and spectral bounds make the
short- and long-duration tails uniformly negligible, so removing the
window gives the claimed variable-duration path convergence.
Independence gives the joint statement; taking traces gives
\eqref{eq:raw-union-limit}, and each limiting bridge has a compact trace
contained in \(D\).
\end{proof}

\section{Hittability of the entangled multipath LERW}\label{sec:hittability-input}

This section extends the ordinary-LERW hittability estimates of
Sapozhnikov--Shiraishi~\cite[Lemmas~3.2--3.3]{SS18} to entangled multipath
LERW, which provides the main technical input in the outline of~\cite[Section 1.5]{Cai26}. Cai's initial-segment inclusion reduces the geometric part to
ordinary loop erasure; the additional work is to transfer the estimates
to normalized bridges, uniformly near their marked endpoints.

We now work in a fixed bounded Lipschitz domain \(D\subset\R^3\); later
\(D=B\).  Fix interior ordered pairs \((a_i,b_i)_{i\le r}\) with
\(a_i\ne b_i\), allowing repeated pairs, choose lattice approximations,
and construct \(C_r^\delta\) by \eqref{eq:entangled-construction}.  Write
\(N_i=\operatorname{len}\omega_i\) for the number of steps of the normalized
random-walk bridge \(\omega_i\).  We use the balls and exit times from
\eqref{eq:lattice-ball-exit}.  Let \(S\) be a simple random walk, independent
of all normalized random-walk bridges, and define
\begin{equation}\label{eq:hittability-functional}
 \cH_\delta(C;s,L;U)
 :=\sup_{\substack{z\in U\cap\delta\Z^3\\ \dist(z,C)\le s}}
 \mathbb P^z\big(S[0,\tau_{z,L}]\cap C=\varnothing\big),
 \qquad U\Subset D.
\end{equation}

The next proposition says that a walk started close to the entangled range is likely to hit it before leaving a slightly larger neighbourhood.
\begin{proposition}
\label{prop:multipath-hittability}
For every \(U\Subset D\) and
\(0<R_*<\dist(U,\partial D)/10\), there is \(\eta_H>0\) such that
\begin{equation}\label{eq:H2}
 \lim_{\eps\downarrow0}\limsup_{\delta\downarrow0}
 \mathbb P\left(
  \cH_\delta(C_r^\delta;\eps^2R_*,\sqrt\eps R_*;U)
       >\eps^{\eta_H}\right)=0.
\end{equation}

\end{proposition}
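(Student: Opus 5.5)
We plan to reduce the estimate to the ordinary-LERW hittability of Sapozhnikov--Shiraishi \cite[Lemmas~3.2--3.3]{SS18}. The reduction goes through the monotonicity of the hitting event in \(C\): if \(C\supset C'\), then \(\cH_\delta(C;s,L;U)\) is controlled by the same functional for \(C'\), provided the near point is close to \(C'\). The key structural input is Cai's initial-segment inclusion. For each \(i\) and each vertex \(v\) of \(\omega_i\), the range \(C_r^\delta\supset C_i\) contains the loop erasure of some initial segment of \(\omega_i\) ending near \(v\). This holds because the reduced record \(P_i\) keeps \(\eta=\LE(\omega_i)\) together with every loop entangled with \(C_{i-1}\), and the erased loops attached to kept material are retained. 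More concretely, the chronological loop erasure \(\LE(\omega_i[0,k])\) is contained in \(\supp(\eta)\cup\bigcup\{\ell_j\}\) for the loops of the record erased before time \(k\), and all of those loops satisfy \(\ell\prec\eta\). Since every point of \(C_r^\delta\) lies on some \(\omega_i\), a point \(z\) within \(\eps^2R_*\) of \(C_r^\delta\) is within \(\eps^2R_*\) of a vertex \(\omega_i(k)\). It therefore suffices to show that, with high probability and uniformly over all \(i,k\) with \(\omega_i(k)\in U^{+R_*}\), the simple random walk from any \(z\) near \(\omega_i(k)\) hits \(\LE(\omega_i[0,k])\) before \(\tau_{z,\sqrt\eps R_*}\), up to probability \(\eps^{\eta_H}\).

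The second step transfers this statement from bridges to ordinary random walk. We split each \(\omega_i\) into the segments before and after the time it first enters a small ball \(\B(b_i,\rho)\), and similarly handle the time reversal near \(a_i\). On the portion away from the endpoint \(b_i\), the Radon--Nikodym bound used in the proof of \cref{lem:green-path-limit} applies: the density \(\mathcal G\)-ratio \(Q^{\delta,n-k}(S_k,y)/Q^{\delta,n}\) and the Green version \(\mathcal G_{D^\delta}(S_k,b)/\mathcal G_{D^\delta}(a,b)\) are bounded by \(C_\rho\) for stopping times before entering \(\B(b,\rho)\), by \eqref{eq:green-3d-bounds} and \eqref{eq:green-normalization}. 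This bound is uniform for the normalized bridge viewed as an \(h\)-transform. The bad event for the bridge thus has probability at most \(C_\rho\) times that of the bad event for simple random walk killed on exiting \(D\). For the latter, we apply the SS18 estimate in the form valid for all initial segments simultaneously. That form requires a union over \(O(\eps^{-c})\) mesoscopic boxes covering \(U^{+R_*}\), and uses a per-box failure probability \(\eps^{M}\) with \(M\) large, which SS18's iteration over \(\log(1/\eps)\) independent scales supplies. Boundary effects do not arise because \(U\Subset D\) and \(R_*<\dist(U,\partial D)/10\), so all relevant balls stay inside \(D\).

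The third step treats the endpoint regions near \(b_i\), and by reversibility near \(a_i\). There the \(h\)-transform density is unbounded, and the initial segments of \(\omega_i\) ending near \(b_i\) are genuinely conditioned. We will use reversibility of the normalized bridge: \(\omega_i\) reversed is a bridge from \(b_i\) to \(a_i\). Near \(b_i\), however, the loop erasure is not reversible, which is why the inclusion must be handled carefully. Our approach is to note that for \(z\) near \(b_i\), the set \(\LE(\omega_i)\) itself, which is contained in \(C_r^\delta\), passes near \(b_i\), and that \(\LE\) of a bridge has the law of the reversed LERW from \(b_i\). We can therefore apply hittability for LERW started at \(b_i\), namely the SS18 estimate near the starting point, inside \(\B(b_i,2\rho)\). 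We choose \(\rho\) as a small fixed multiple of \(R_*\) independent of \(\eps\); the constants \(C_\rho\) are then harmless. This endpoint uniformity is the step we expect to be the main obstacle, since the decomposition into a bounded-density bulk and reversed endpoint pieces must be checked to cover every point of \(C_r^\delta\) with the correct inclusion. Finally, a union bound over \(i\le r\) and the finitely many pieces yields \eqref{eq:H2} with \(\eta_H\) taken as the SS18 exponent, reduced slightly to absorb polynomial union-bound losses.
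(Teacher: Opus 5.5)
\textbf{There is a genuine gap in your treatment of the endpoint $b_i$.} With $\rho$ a fixed multiple of $R_*$, the ball $\B_\delta(b_i,\rho)$ is macroscopic, while hittability is needed at scale $L=\sqrt\eps R_*\to0$. Inside this ball you only control $\eta_i=\LE(\omega_i)$, through the reversal identity for $\LE$.

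For $r\ge2$, however, $C_r^\delta\cap\B_\delta(b_i,\rho)$ also contains kept loops of $\omega_i$, namely those entangled with $C_{i-1}$. This happens, for example, when $(a_i,b_i)$ repeats an earlier pair, so that $C_{i-1}$ already passes through $b_i$. Such loops can have diameter of order $\rho$ and lie at distance $\gg L$ from $\eta_i$. A point $z$ next to such a loop is covered by neither of your two mechanisms:
\begin{itemize}
\item not by your bulk bound, since its witness time is after $\omega_i$ enters $\B_\delta(b_i,\rho)$, where you have no density bound;
\item not by hittability of $\eta_i$, since $\eta_i$ may be far from $z$ at scale $L$.
\end{itemize}

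You cannot escape by choosing a more convenient witness, either. Your justification of the initial-segment inclusion is wrong: $P_i=\Phi_{C_{i-1}}(H_i)$ keeps only loops with $C_{i-1}\prec\ell$, not all loops with $\ell\prec\eta$. For $i=1$ no loop is kept at all. The claimed inclusion for every vertex of $\omega_i$ is also false, since the bridge visits vertices far from $C_r^\delta$. The correct input, \cref{lem:prefix-witness}, is purely existential, so neither the index nor the time of the witness is at your disposal. Your ``uniformly over all $i,k$'' formulation is a valid sufficient condition once combined with that lemma, but it does not let you steer witnesses away from the endpoint region.

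\textbf{The missing idea is to let the endpoint region shrink with $\eps$ and pay for the Doob transform instead of avoiding it.} The paper uses three scales $s=\eps^2R_*\ll h=\eps^{5/4}R_*\ll L$.
\begin{itemize}
\item \emph{Off-center cubes.} For every cube at distance $>2h$ from both endpoints of the witnessing bridge, \cref{clm:stopped-Green} and \eqref{eq:green-3d-bounds} give the factor $\sup_{|x-b_i|>h}\mathcal G_{D^\delta}(x,b_i)/\mathcal G_{D^\delta}(a_i,b_i)\le C/h$. This is only polynomial in $\eps^{-1}$. Together with the $O(s^{-3})$ cubes, it is beaten by the per-cube bound $(s/h)^M$ of \cref{clm:off-center-prefix}; $M=12$ gives a total of $O(\eps^{7/4})$.
\item \emph{Endpoint balls.} What remains are the balls of radius $3h\ll L$ about the endpoints. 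There every $z$ is handled by the centered estimate (\cref{clm:centered-prefix}) applied to $\eta_i$ alone, between radii $8h$ and $L/2$, whichever piece of $C_r^\delta$ it is near. This works because $\eta_i$ passes through the endpoint and lies in $C_r^\delta$.
\end{itemize}
Only at this last stage is the endpoint $b_i$ reduced to the starting endpoint by the reversal identity \eqref{eq:LERW-reversal-law}. That is why this bad event must be formulated in terms of $\eta_i$ only, and why the endpoint region must be small enough for $\eta_i$ alone to suffice.
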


The proof below adapts \cite[Lemmas~3.2--3.3]{SS18} to the entangled multipath LERW from normalized random-walk bridges using \cite[Lemma~2.4]{Cai26}.
The additional work is the
transfer from ordinary walks to normalized bridges, including uniform
control near their marked endpoints. We keep these steps and their
scale choices, which are needed for the near-miss estimate.

The next lemma, \cite[Lemma~2.4]{Cai26}, states that every vertex of the entangled range is the endpoint of a loop-erased initial segment contained in that range.

\begin{lemma}\label{lem:prefix-witness}
For every \(w\in C_r^\delta\), there are an index \(i\le r\) and a time index
\(\sigma\) of the normalized random-walk bridge
\(\omega_i=(X_0^i,\ldots,X_{N_i}^i)\) such that
\begin{equation}\label{eq:prefix-witness}
 X_\sigma^i=w,
 \qquad \supp\LE(X^i[0,\sigma])\subset C_r^\delta.
\end{equation}
\end{lemma}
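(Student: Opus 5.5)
We prove \cref{lem:prefix-witness} by induction on the index \(i\) in the construction \eqref{eq:entangled-construction}, using the structure of the reduced record \(P_i=\Phi_{C_{i-1}}(H_i)\). Since \(C_r^\delta=\bigcup_i\supp(P_i)\), it suffices to show the following: for each \(i\) and each \(w\in\supp(P_i)\), there is either a time \(\sigma\) of \(\omega_i\) with \(X^i_\sigma=w\) and \(\supp\LE(X^i[0,\sigma])\subset C_i\), or a witness for \(w\) from an earlier path. The inclusion \(C_i\subset C_r^\delta\) then gives \eqref{eq:prefix-witness}.

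The first key step is to describe the loop erasure of a prefix through the LERW record. Write \(H_i=P(\omega_i)=(\eta;\ell_1,\dots,\ell_m)\). For a time \(\sigma\), \(\LE(X^i[0,\sigma])\) consists of the part of \(\eta\) already traced, together with the "pending" portions of loops that are still open at time \(\sigma\). More precisely, its support lies in the union of \(\supp(\eta)\) and the supports of those loops \(\ell_j\) that are being built at time \(\sigma\), that is, loops whose chronological erasure has not yet occurred by time \(\sigma\).

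Now choose \(\sigma\) carefully for \(w\in\supp(P_i)\).
\begin{itemize}
\item If \(w\in\supp(\eta)\), take \(\sigma\) to be the last visit time of \(\omega_i\) to \(w\). Then \(\LE(X^i[0,\sigma])\) is the initial segment of \(\eta\) up to \(w\), which lies in \(\supp(\eta)\subset C_i\).
\item If \(w\in\supp(\ell_j)\) for a kept loop \(\ell_j\), take \(\sigma\) to be the time in the excursion forming \(\ell_j\) at which \(w\) is visited, just before \(\ell_j\) closes. Then \(\LE(X^i[0,\sigma])\) is a simple path that runs along \(\eta\)'s prefix and through the nested loops that are open at that moment, namely \(\ell_j\) itself and loops erased later whose excursions enclose time \(\sigma\).
\end{itemize}

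The second key step, and the main obstacle, is to show that every loop open at time \(\sigma\) is kept by \(\Phi_{C_{i-1}}\), or is otherwise contained in \(C_i\). Take a loop \(\ell_k\) still open at time \(\sigma\), meaning it is erased after \(\ell_j\) and its excursion contains the piece of \(\LE\) through \(w\). Then \(\ell_k\) shares the vertex where \(\ell_j\) is rooted, so \(\supp(\ell_j)\cap\supp(\ell_k)\ne\varnothing\) with \(j<k\). Since \(\ell_j\) is kept, \(C_{i-1}\prec\ell_j\), hence \(C_{i-1}\prec\ell_j\prec\ell_k\) in the extended entangled order, so \(\ell_k\) is kept as well. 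The sequential rule for building \(A_j\) agrees with the transitive closure, and the definition of \(\Phi_A\) says the kept loops are exactly those with \(A\prec\ell\). This is the delicate point: one must verify carefully that "pending at time \(\sigma\)" really forces a chain of support intersections in increasing erasure order. We would check this by reinterpreting the chronological erasure as a nested stack, in which each pending loop contains the current tip of the partially erased path.

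Combining the two steps, \(\supp\LE(X^i[0,\sigma])\subset\supp(P_i)\subset C_i\subset C_r^\delta\), which proves \eqref{eq:prefix-witness}. This is the statement of \cite[Lemma~2.4]{Cai26}, so in the paper one may alternatively just cite it.
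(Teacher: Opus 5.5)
The paper gives no proof of this lemma; it quotes \cite[Lemma~2.4]{Cai26}. Your proposal is therefore a self-contained substitute, and its skeleton is sound. No induction is needed: since \(C_r^\delta=\bigcup_i\supp(P_i)\), fix \(i\) with \(w\in\supp(P_i)\) and show \(\supp\LE(X^i[0,\sigma])\subset\supp(P_i)\). For \(w\in\supp\eta_i\), the last visit works by \cref{lem:last-exit-prefix}. Now let \(w\) lie in a kept loop \(\ell_j\) erased at time \(t_j\), and take \(\sigma\) to be the time at which the stack entry holding \(w\) at time \(t_j-1\) was pushed. Then \(\LE(X^i[0,\sigma])\) is the time-\((t_j-1)\) stack \(s_0,s_1,\ldots\) cut at \(w\). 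Its entries from the root \(s_p\) of \(\ell_j\) up to \(w\) lie in \(\ell_j\). Each \(s_q\) with \(q<p\) either survives into \(\eta_i\) or is removed by a loop erased after \(t_j\). Everything hinges on such loops being kept.

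That step, which you flag as delicate, is where the proposal fails as written. A later loop removing \(s_q\) need not contain the root of \(\ell_j\), nor any vertex of \(\ell_j\). For example, take \(p\ge3\). After \(t_j\) the walk may leave \(s_p\) and return to \(s_{p-1}\), erasing a loop \(\ell_{k_1}\) through \(s_{p-1},s_p\). It may then leave \(s_{p-1}\) through fresh vertices and return to \(s_{p-3}\), erasing a loop \(\ell_{k_2}\) through \(s_{p-3},s_{p-2},s_{p-1}\). The vertex \(s_{p-2}\) of \(\LE(X^i[0,\sigma])\) is covered only by \(\ell_{k_2}\). That loop is in general disjoint from \(\ell_j\), whose vertices are \(s_p\) and the entries above it, and it meets \(\ell_j\) only through \(\ell_{k_1}\).

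What is true is a chain. Set \(\ell_{k_0}=\ell_j\), and let \(\ell_{k_1},\ell_{k_2},\ldots\) be the loops erased after \(t_j\) whose roots sit at successive record-low stack positions \(p=b_0>b_1>b_2>\cdots\). Between the erasures of \(\ell_{k_m}\) and \(\ell_{k_{m+1}}\), no entry at position \(\le b_m\) is removed; loops rooted exactly at \(b_m\) revisit \(s_{b_m}\) but keep it. Hence \(\ell_{k_{m+1}}\) removes \(s_{b_m}\), the root of \(\ell_{k_m}\). This gives \(\ell_j\prec\ell_{k_1}\prec\ell_{k_2}\prec\cdots\) with increasing erasure indices. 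Since \(C_{i-1}\prec\ell_j\), all these loops are kept by \(\Phi_{C_{i-1}}\). Finally, an entry \(s_q\) with \(q<p\) that is ever removed is removed by the first \(\ell_{k_m}\) with \(b_m<q\). With this replacement your argument proves the lemma.
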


We also need a familiar last-exit version for the final loop erasure. We omit the proof, which follows immediately from the
last-visit construction of chronological loop erasure.

\begin{lemma}\label{lem:last-exit-prefix}
Let \(\eta=\LE(X[0,N])\), and let \(u\) be a vertex of \(\eta\).  There
is \(\sigma\le N\) with \(X_\sigma=u\) such that
\(\LE(X[0,\sigma])\) is the initial segment of \(\eta\) ending at \(u\).
\end{lemma}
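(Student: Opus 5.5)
The plan is to read off the statement directly from the last-visit description of chronological loop erasure. Write \(\eta=\LE(X[0,N])=(\eta_0,\ldots,\eta_m)\). Recall the standard construction: set \(s_0=\max\{n\le N:X_n=X_0\}\), so that \(\eta_0=X_{s_0}\). Inductively, if \(\eta_j=X_{s_j}\) with \(s_j<N\), put \(\eta_{j+1}=X_{s_j+1}\) and \(s_{j+1}=\max\{n\le N:X_n=X_{s_j+1}\}\). The sequence stops when \(s_j=N\). This description agrees with chronological erasure of \(X[0,N]\), and the times \(s_0<s_1<\cdots<s_m=N\) are strictly increasing. Given a vertex \(u=\eta_k\) of \(\eta\), I would take \(\sigma=s_k\), the last visit of \(X[0,N]\) to \(u\). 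Then \(X_\sigma=u\) holds by construction.

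The key step is to show that \(\LE(X[0,\sigma])=(\eta_0,\ldots,\eta_k)\). I would run the same last-visit construction on the truncated path \(X[0,\sigma]\) and check by induction on \(j\le k\) that it produces the same times \(s_j\). The needed observation is that for \(j\le k\), the vertex \(\eta_j\) is never visited by \(X\) after time \(s_j\). Since \(s_j\le s_k=\sigma\), the last visit to \(\eta_j\) before \(\sigma\) equals the last visit before \(N\), namely \(s_j\). For \(j<k\) we have \(s_j<\sigma\), so the successor step \(X_{s_j+1}\) lies in the truncated path, and the recursion continues identically. At \(j=k\), the time \(s_k=\sigma\) is the terminal time of the truncated path, so the construction stops there with output \((\eta_0,\ldots,\eta_k)\), which is the initial segment of \(\eta\) ending at \(u\).

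The only point needing care is the equivalence between the last-visit description and chronological erasure. It is standard (Lawler's formulation) and is presumably what the paper means by ``the last-visit construction''. If a self-contained justification were wanted, I would give a short induction on \(N\): appending one step either extends the loop-erased path or erases the loop back to the previous occurrence of the new vertex, and this is consistent with the last-visit recursion. I do not expect any genuine obstacle. The argument is purely combinatorial and uses no property of the bridge law, so it applies verbatim to each normalized bridge \(\omega_i\).
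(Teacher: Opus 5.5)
Your proof is correct and follows the paper's approach: the paper omits the proof, saying only that it follows from the last-visit construction of chronological loop erasure. Your choice \(\sigma=s_k\), the last visit to \(u\), together with the check that the truncated path \(X[0,\sigma]\) reproduces the same last-visit times \(s_0,\ldots,s_k\), supplies exactly those omitted details.
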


We are now ready to prove  \cref{prop:multipath-hittability}.

\begin{proof}[Proof of \cref{prop:multipath-hittability}]
\emph{Step 1: scales, lattice cover, and quantifiers.}
Put \(d_i^0=|a_i-b_i|>0\).  Fix an integer \(M\ge1\), to be chosen at
the end, and put
\begin{equation}\label{eq:three-scales}
 s=\eps^2R_*,\qquad h=\eps^{5/4}R_*,\qquad
 L=\eps^{1/2}R_*.
\end{equation}
Thus
\begin{equation}\label{eq:scale-ratios}
 \frac{s}{h}=\frac{h}{L}=\eps^{3/4}.
\end{equation}
The order of all choices below is important.  We first choose \(\eps>0\)
so small that
\begin{equation}\label{eq:scale-separation}
 32s<h,\qquad 32h<L,
\end{equation}
and that \(L\) is smaller than one twentieth of
\[
 \min_{i\le r}\big\{\dist(a_i,\partial D),\dist(b_i,\partial D),
                         |a_i-b_i|\big\}.
\]
We then choose \(\delta_0(\eps)>0\) so small that, for every
\(0<\delta<\delta_0(\eps)\), one has \(\delta<s/100\), and the same
inequalities, with a factor \(1/2\) of slack, hold for all lattice
approximations of the endpoints.  Shrinking \(\delta_0(\eps)\) once more,
we also require, for every \(i\),
\begin{equation}\label{eq:endpoint-uniform-separation}
 |a_i^\delta-b_i^\delta|\ge\frac12d_i^0,
 \qquad R+\delta\le\frac14d_i^0,
 \quad R:=L/2.
\end{equation}
From now on \(\delta\) is arbitrary in this range.  This is exactly the
order of limits in \eqref{eq:H2}.

Since \(\dist(U,\partial D)>10R_*\), the closed \(2R_*\)-neighbourhood
\(U^+\) of \(U\) is compactly contained in \(D\).  Put
\[
 s_\delta=2\delta\left\lceil\frac{s}{2\delta}\right\rceil.
\]
The half-open cubes
\[
 Q=v_Q+[-s_\delta/2,s_\delta/2)^3,\qquad
 v_Q\in s_\delta\Z^3\subset\delta\Z^3,
\]
partition \(\R^3\).  Let \(\mathcal Q_\delta\) consist of the cubes
meeting the closed \((s+\delta)\)-neighbourhood of \(U\).  Then
\begin{equation}\label{eq:cube-cover-count}
 |\mathcal Q_\delta|\le C_U s^{-3},\qquad
 Q\subset B(v_Q,s)\quad(Q\in\mathcal Q_\delta),
\end{equation}
for all \(\delta<s/100\).  Indeed,
\(s\le s_\delta<s+2\delta\), and the half diagonal of \(Q\) is
\(\sqrt3s_\delta/2<s\).  The second inclusion also shows that all
\(B(v_Q,h/2)\) occurring below are contained in \(U^+\), hence in \(D\).

For \(Q\in\mathcal Q_\delta\) and a nearest-neighbour path \(\gamma\),
set
\begin{align}
 \sigma_Q(\gamma)&=\inf\{j:\gamma_j\in \B_\delta(v_Q,4s)\},
       \notag\\
 \Lambda_Q(\gamma)&=\supp(\gamma[0,\sigma_Q])\cap
       \big(\B_\delta(v_Q,h/2)\setminus\B_\delta(v_Q,4s)\big),
       \label{eq:Gamma-Q}\\
 \Psi_Q(\gamma)&=\sup_{z\in \B_\delta(v_Q,3s)}
 \mathbb P^z\big(S[0,\tau_{v_Q,h/2}]\cap\Lambda_Q(\gamma)=\varnothing\big).
 \notag
\end{align}
When \(\sigma_Q(\gamma)=\infty\), we set \(\Psi_Q(\gamma)=0\).

\emph{Step 2: the off-center initial-segment estimate.}
The following is \cite[Lemma~3.3]{SS18}, in the notation needed below.

\begin{claim}\label{clm:off-center-prefix}
There are \(A_M,C_M<\infty\) and \(\kappa_M>0\), depending only on
\(M\), such that, whenever \(|a-v_Q|>2h\),
\begin{equation}\label{eq:SS-off-center}
 \mathbb P^a\big(\exists T<\tau_{D^\delta}:S_T\in Q,\
 \Psi_Q(\LE(S[0,T]))>A_M(s/h)^{\kappa_M}\big)
 \le C_M(s/h)^M.
\end{equation}
\end{claim}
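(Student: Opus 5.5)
The claim is \cite[Lemma~3.3]{SS18} written in the present notation. The plan is therefore to check that its hypotheses match ours and that the statement transfers verbatim to \(D^\delta\), rather than to reprove it.

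The underlying argument in \cite{SS18} runs as follows.

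Fix a time \(T\) with \(S_T\in Q\). The loop erasure \(\LE(S[0,T])\) must cross the annulus \(\B_\delta(v_Q,h/2)\setminus\B_\delta(v_Q,4s)\) before reaching \(\B_\delta(v_Q,4s)\), because \(|a-v_Q|>2h\). Hence \(\Lambda_Q\) contains the whole first crossing of that annulus.

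Split the annulus into the \(\asymp\log_2(h/s)\) dyadic shells between radii \(4s\) and \(h/2\). In each shell, a walk started from \(\B_\delta(v_Q,3s)\) must cross the shell before exiting \(B(v_Q,h/2)\). The three-dimensional LERW hittability estimate \cite[Lemma~3.2]{SS18} says the following: with conditional probability at least \(1-c^{-1}\) given the past, the LERW crossing in a shell is hit by such a walk with probability bounded below by \(c\).

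These estimates are applied at the scales of consecutive shells. This uses a domain Markov decomposition of the LERW, conditioning on the portion up to the first entry to each shell. The killing of the walk at \(\partial D^\delta\) only makes the conditioned walk avoid the boundary, and it does not interfere with the estimates, because \(B(v_Q,h/2)\subset U^+\Subset D\).

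Independence across shells then has two consequences:
\begin{itemize}
\item the event that fewer than a fixed fraction of the shells are ``good'' has probability at most \(C_M(s/h)^M\), by a large-deviation estimate for Bernoulli-type trials with \(M\) absorbed into the choice of fraction and constants;
\item on the complementary event, escaping all good shells has probability at most \((1-c)^{c'\log(h/s)}=A_M(s/h)^{\kappa_M}\).
\end{itemize}

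It remains to pass from a fixed \(T\) to the existential quantifier over \(T\). Consider the successive visits of \(S\) to \(Q\). The loop erasure \(\LE(S[0,T])\), restricted to its part before \(\sigma_Q\), changes only through the loop erasure of subsequent returns into \(\B_\delta(v_Q,4s)\) and excursions out of it. \cite{SS18} controls this by working with the first entrance to \(\B_\delta(v_Q,4s)\) and an excursion decomposition. The number of returns from \(\partial B(v_Q,h)\) to \(Q\) has a geometric tail with parameter \(\lesssim s/h\) in three dimensions, so a union bound over the excursions costs only a constant factor.

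The main obstacle is exactly this uniformity over all \(T\): it requires \(\Lambda_Q\) to be a monotone, stable functional of the prefix up to \(\sigma_Q\), and the prefix before first entering \(\B_\delta(v_Q,4s)\) can be partially erased by later loops. I would follow \cite{SS18}'s device of using the portion of the loop erasure that survives, via Lemma~\ref{lem:last-exit-prefix}. Beyond that, I would check that the estimate is uniform in \(\delta<s/100\) and in the starting point \(a\in D^\delta\).
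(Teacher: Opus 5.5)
Your plan matches the paper's proof: the claim is \cite[Lemma~3.3]{SS18} transported to \(\delta\Z^3\) by translation and scaling, with inner and outer radii \(4s/\delta\) and \(h/(2\delta)\), using that the annular portion of the loop erasure in that lemma lies in \(\Lambda_Q\).

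The transfer is not quite verbatim, though. The cited avoidance probability is for an independent walk started at the centre \(v_Q\), whereas \(\Psi_Q\) takes a supremum over \(z\in\B_\delta(v_Q,3s)\). The paper closes this with interior Harnack. The map \(z\mapsto\mathbb P^z(S[0,\tau_{v_Q,h/2}]\cap\Lambda_Q=\varnothing)\) is nonnegative and harmonic on \(\B_\delta(v_Q,4s)\), which \(\Lambda_Q\) avoids, and the Harnack constant is absorbed into \(A_M\). You should state this step explicitly.

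The killing also needs no talk of conditioning. The event with \(T<\tau_{D^\delta}\) is contained in the whole-lattice event of the cited lemma, because \(\LE(S[0,T])\) depends only on \(S[0,T]\). The test walk in \(\Psi_Q\) stays in \(B(v_Q,h/2)\subset D\).

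The uniformity over \(T\) that you call the main obstacle is already part of the statement of \cite[Lemma~3.3]{SS18}. No further device is needed; in particular \cref{lem:last-exit-prefix} plays no role here (the paper uses it for the endpoint estimate).

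Your reconstruction of the underlying argument should not be relied on as a proof:
\begin{itemize}
\item \cite[Lemma~3.2]{SS18} is the centred multiscale analogue used in \cref{clm:centered-prefix}, not a single-shell estimate.
\item From a fixed conditional failure probability \(p\) per dyadic shell, you can only extract bounds of order \((s/h)^{c\log(1/p)}\) for the event that too few shells are good. Arbitrary \(M\) therefore needs blocks of scales whose failure probability is small depending on \(M\), which is consistent with \(\kappa_M\) depending on \(M\). It cannot be absorbed into the choice of fraction alone.
\item A union bound over returns from \(\partial B(v_Q,h)\) does not control how \(\Lambda_Q\) changes within a single such return. Short excursions out of \(\B_\delta(v_Q,4s)\) can hit the earlier annular part of the loop erasure and reroute its entry into \(\B_\delta(v_Q,4s)\).
\end{itemize}
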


\begin{proof}
Apply \cite[Lemma~3.3]{SS18} after translation and scaling, with inner
and outer radii \(4s/\delta\) and \(h/(2\delta)\).  The portion of the loop-erased path used in the
annular avoidance event of that lemma
is contained in \(\Lambda_Q\), and interior Harnack replaces the
starting point \(v_Q\) by the supremum over \(\B_\delta(v_Q,3s)\).
The radii have ratio \(8s/h\); absorbing the fixed factors into
\(A_M,C_M\) gives \eqref{eq:SS-off-center}.
\end{proof}

We also record the following identity for stopped normalized-random-walk bridges.

\begin{claim}\label{clm:stopped-Green}
Let \(\tau\) be a stopping time for the killed walk in \(D^\delta\), and
let \(F\in\mathcal F_\tau\).  If \(N\) is the number of steps of a
normalized random-walk bridge from \(a\) to \(b\), then
\begin{equation}\label{eq:stopped-Doob-exact}
 \mathbb P_{D^\delta}^{a,b}(F,\tau<N)
 =\frac{\mathbb E^a\!\left[
   \one_{\{F,\tau<\tau_{D^\delta}\}}
   \big(\mathcal G_{D^\delta}(S_\tau,b)-\one_{\{S_\tau=b\}}\big)\right]}
 {\mathcal G_{D^\delta}(a,b)}.
\end{equation}
\end{claim}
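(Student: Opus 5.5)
We prove the identity by decomposing the bridge according to its path up to time \(\tau\). A normalized bridge from \(a\) to \(b\) assigns the path \(\omega=(\omega_0,\ldots,\omega_N)\) the mass \(q^\delta(\omega)/\mathcal G_{D^\delta}(a,b)\). On the event \(\{\tau<N\}\), split \(\omega\) at time \(\tau\) into a prefix \(\omega[0,\tau]\) and a suffix \(\omega[\tau,N]\). Since \(\tau\) is a stopping time for the killed walk, whether \(\tau(\omega)=k\) and whether \(F\) occurs are both determined by the prefix alone. Moreover, the bridge stays in \(D^\delta\), so on \(\{\tau<N\}\) the prefix is a killed-walk path with \(\tau<\tau_{D^\delta}\). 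The weight factorizes as
\[
q^\delta(\omega)=q^\delta(\omega[0,\tau])\,q^\delta(\omega[\tau,N]).
\]
The suffix ranges over all nearest-neighbour paths in \(D^\delta\) from \(S_\tau\) to \(b\) with at least one step.

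We first sum over suffixes with the prefix fixed. The sum of \(q^\delta\) over all paths from \(u\) to \(b\), including the zero-step path, is \(\mathcal G_{D^\delta}(u,b)\). The zero-step path exists only when \(u=b\) and has weight one. The condition \(\tau<N\) means the suffix has at least one step, so the suffix sum equals \(\mathcal G_{D^\delta}(u,b)-\one_{\{u=b\}}\). This is exactly where the correction term in \eqref{eq:stopped-Doob-exact} comes from.

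We then sum over prefixes. A prefix path in \(D^\delta\) with \(\tau=k\) has \(q^\delta\)-weight equal to its probability under \(\mathbb P^a\) for the killed walk, since \(Q^\delta\) is the killed transition matrix. Summing over all admissible prefixes in \(F\cap\{\tau<\tau_{D^\delta}\}\) therefore gives \(\mathbb E^a[\one_{\{F,\tau<\tau_{D^\delta}\}}(\mathcal G_{D^\delta}(S_\tau,b)-\one_{\{S_\tau=b\}})]\). Dividing by \(\mathcal G_{D^\delta}(a,b)\) yields \eqref{eq:stopped-Doob-exact}.

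The only delicate point is the bookkeeping between the events \(\{\tau<N\}\) and \(\{\tau=N\}\); this is the \(S_\tau=b\) correction handled above. We should also make sure that \(\tau\) and \(F\) are evaluated on the bridge path through the natural filtration, so that they depend only on the prefix. No analytic estimate is needed: the identity is purely combinatorial and finite, and all sums converge because \(\rho(Q^\delta)<1\).
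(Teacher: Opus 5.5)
Your proposal is correct and is essentially the paper's proof. You use the same prefix/suffix factorization at time $\tau$, and the positive-length suffix sum $\sum_{n\ge1}Q^{\delta,n}(x,b)=\mathcal G_{D^\delta}(x,b)-\one_{\{x=b\}}$ produces the correction term. The only difference is that the paper first treats bounded $\tau$ and passes to $\tau\wedge m$ by monotone convergence, whereas you sum directly over the finite values of $\tau$; this is equally valid because all terms are nonnegative.
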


\begin{proof}
Suppose first that \(\tau\) is bounded.  Fix a stopped initial segment
\(\xi=(a=\xi_0,\ldots,\xi_k=x)\) on which \(F\) occurs.  Every full path
counted by \(\{\tau<N\}\) is obtained uniquely by appending to \(\xi\) a
positive-length path from \(x\) to \(b\).  The total weight of all such
continuations is
\[
 \sum_{n\ge1}Q^{\delta,n}(x,b)
 =\mathcal G_{D^\delta}(x,b)-\one_{\{x=b\}}.
\]
Multiply by the weight of \(\xi\), sum over stopped initial segments, and divide
by \(\mathcal G_{D^\delta}(a,b)\).  This proves
\eqref{eq:stopped-Doob-exact} for bounded \(\tau\).  Applying it to
\(\tau\wedge m\) on \(\{\tau\le m\}\), and then using monotone
convergence, proves the general case.
\end{proof}

For any path \(\xi\), define
\begin{equation}\label{eq:tau-Q-definition}
 \tau_Q(\xi)=\inf\big\{T\ge0:\xi_T\in Q,\
 \Psi_Q(\LE(\xi[0,T]))>A_M(s/h)^{\kappa_M}\big\}.
\end{equation}
It is a stopping time because the expression tested at time \(T\) is a
deterministic function of \(\xi[0,T]\).  Call \(Q\) \emph{\(i\)-far} if
\begin{equation}\label{eq:i-far}
 |v_Q-a_i^\delta|>2h\quad\hbox{and}\quad
 |v_Q-b_i^\delta|>2h.
\end{equation}
On \(\{\tau_Q<\tau_{D^\delta}\}\), the stopped point is in
\(Q\subset B(v_Q,s)\).  For an \(i\)-far cube,
\[
 |S_{\tau_Q}-b_i^\delta|>2h-s>h,
\]
so the indicator correction in \eqref{eq:stopped-Doob-exact} vanishes.
The stopped identity and \cref{clm:off-center-prefix} now give
\begin{equation}\label{eq:stopped-Doob}
 \mathbb P_{D^\delta}^{a_i^\delta,b_i^\delta}(\tau_Q<N_i)
 \le
 \frac{\sup_{x\in Q\cap\delta\Z^3}\mathcal G_{D^\delta}(x,b_i^\delta)}
      {\mathcal G_{D^\delta}(a_i^\delta,b_i^\delta)}
 \mathbb P^{a_i^\delta}(\tau_Q<\tau_{D^\delta}).
\end{equation}
By \eqref{eq:green-3d-bounds}, the numerator is at most
\(C\delta/h\), whereas the denominator is at least \(c\delta\).
Consequently
\begin{equation}\label{eq:far-cube-bound}
 \mathbb P_{D^\delta}^{a_i^\delta,b_i^\delta}(\tau_Q<N_i)
 \le C_Mh^{-1}(s/h)^M.
\end{equation}

\emph{Step 3: cubes near the two endpoints.}
For \(x\in D^\delta\) and
\(0<\rho<R\), define

\begin{equation*}
 \Psi^{\mathrm{cen}}_{x;\rho,R}(\gamma)=
 \sup_{z\in\B_\delta(x,\rho/2)}
 \mathbb P^z\big(S[0,\tau_{x,R}]\cap\supp(\gamma)\cap(\B_\delta(x,R)\setminus\B_\delta(x,\rho))=\varnothing\big).
\end{equation*}

The next claim gives the centered analogue of \cref{clm:off-center-prefix}, controlling avoidance of the annular part of a loop-erased initial segment that exits the outer ball.

\begin{claim}\label{clm:centered-prefix}
There are \(A_M^{\mathrm e},C_M<\infty\) and
\(\kappa_{\mathrm e,M}>0\) such that, whenever
\(0<\rho<R\), \(\delta\le\rho/10\), and \(B(x,R)\subset D\),
\[
 \mathbb P^x\big(\exists T<\tau_{D^\delta}:\supp\LE(S[0,T])
 \not\subset\B_\delta(x,R),
 \Psi^{\mathrm{cen}}_{x;\rho,R}(\LE(S[0,T]))
 >A_M^{\mathrm e}(\rho/R)^{\kappa_{\mathrm e,M}}\big)
 \le C_M(\rho/R)^M.
\]
\end{claim}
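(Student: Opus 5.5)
We will derive \cref{clm:centered-prefix} from the centered hittability estimate for ordinary loop-erased random walk, \cite[Lemma~3.2]{SS18}, in the same way that \cref{clm:off-center-prefix} was derived from \cite[Lemma~3.3]{SS18}. After translating \(x\) to the origin and scaling by \(\delta^{-1}\), the walk starts at the center of the lattice ball of radius \(R/\delta\). The relevant annulus has inner and outer radii \(\rho/\delta\) and \(R/\delta\); their ratio \(\rho/R\) is independent of \(\delta\), and \(\rho/\delta\ge10\) keeps the inner ball macroscopic in lattice units. The statement of \cite[Lemma~3.2]{SS18} concerns \(\LE(S[0,\tau])\), where \(\tau\) is the exit time of the outer ball. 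Its conclusion is that, except on an event of probability \(C_M(\rho/R)^M\), a walk started near the center hits the annular part of this loop erasure before exiting, with failure probability at most \(A(\rho/R)^{\kappa}\).

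We first reduce the statement, which quantifies over all times \(T<\tau_{D^\delta}\), to an estimate for one fixed loop-erased path. Suppose \(\supp\LE(S[0,T])\not\subset\B_\delta(x,R)\), and let \(u\) be the first vertex of \(\LE(S[0,T])\) outside \(\B_\delta(x,R)\). By \cref{lem:last-exit-prefix}, the initial segment of \(\LE(S[0,T])\) ending at \(u\) equals \(\LE(S[0,\sigma])\) for some \(\sigma\le T\) with \(S_\sigma=u\). This initial segment stays in \(\B_\delta(x,R)\) up to its endpoint. Since \(\Psi^{\mathrm{cen}}\) only sees the support inside the ball and is monotone decreasing in the set, it suffices to bound \(\Psi^{\mathrm{cen}}\) of this initial segment.

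The key observation is a stability property of loop erasure. The part of \(\LE(S[0,T])\) up to its first exit from the ball should coincide with the part of \(\LE(S[0,\tau_{x,R}])\) up to its first exit point. This does not hold pathwise, because later returns of the walk can erase earlier portions. We therefore use the standard fact, the domain Markov property of LERW, which is the mechanism also used in \cite[Lemmas~3.2--3.3]{SS18}. Those lemmas are formulated "for all \(T\)" via the quasi-loop framework, or alternatively via the statement that the initial segment of \(\LE(S[0,T])\) up to its first exit is dominated by the first-exit part of \(\LE(S[0,\tau'])\) for the exit time \(\tau'\) of a slightly larger ball. We plan to cite \cite[Lemma~3.2]{SS18} in its "for every \(T\)" form, as its proof controls all loop erasures of initial segments simultaneously through a union over \(O(\log(R/\rho))\) dyadic scales. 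The resulting logarithmic factor is absorbed into \(C_M\) after adjusting \(M\).

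Finally, interior Harnack \cite[Theorems~6.3.8--6.3.9]{LawlerLimic} replaces the starting point \(x\) by the supremum over \(z\in\B_\delta(x,\rho/2)\). This is valid because the escape probability is harmonic in \(z\) on \(\B_\delta(x,\rho)\), which the avoided set does not meet, so the two values are comparable up to a constant absorbed into \(A_M^{\mathrm e}\). The hypothesis \(B(x,R)\subset D\) ensures that the killing at \(\partial D\) plays no role before \(\tau_{x,R}\).

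The main obstacle is the uniformity over all times \(T\) in the reduction step. We expect to handle it by the quasi-loop and first-exit argument of \cite{SS18} rather than by a new pathwise argument.
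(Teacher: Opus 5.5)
Your proposal is correct and follows the paper's proof. Both obtain the claim from \cite[Lemma~3.2]{SS18} after translation and scaling. Both then apply interior Harnack to the avoidance probability, which is harmonic on $\B_\delta(x,\rho)$, to pass from the center $x$ to all starting points in $\B_\delta(x,\rho/2)$.

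Your reduction through the last-exit lemma and the remarks about domain Markov and dyadic scales are unnecessary. The cited lemma is already uniform over $T\ge\tau_{x,R}$, which the exit condition $\supp\LE(S[0,T])\not\subset\B_\delta(x,R)$ forces. Its annular target is also contained in the one appearing in $\Psi^{\mathrm{cen}}$, so monotonicity in the avoided set and adjusting the constants finish the argument.
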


\begin{proof}
This is \cite[Lemma~3.2]{SS18} after translation and scaling, followed
by interior Harnack to allow all starting points in
\(\B_\delta(x,\rho/2)\).  Its annular target is contained in ours,
so the same bound applies after adjusting the constants.
\end{proof}

From now on set
\begin{equation*}
 \rho=8h,\qquad R=L/2,\qquad \frac{\rho}{R}=\frac{16h}{L},
\end{equation*}
and put
\begin{equation*}
 B_M=A_M^{\mathrm e}16^{\kappa_{\mathrm e,M}}.
\end{equation*}

Let \(\eta_i=\LE(\omega_i)\).  For
\(e\in\{a_i^\delta,b_i^\delta\}\), let \(\mathcal B_{i,e}\) be the event
\begin{equation}\label{eq:endpoint-bad}
 \sup_{z\in D^\delta:\,|z-e|\le3h}\mathbb P^z
       (S[0,\tau_{z,L}]\cap\supp(\eta_i)=\varnothing)
 >B_M(h/L)^{\kappa_{\mathrm e,M}}.
\end{equation}

The next claim transfers this centered estimate to normalized random-walk bridges, uniformly near either marked endpoint.
\begin{claim}\label{clm:endpoint-bound}
Uniformly in sufficiently small \(\eps\) and then sufficiently small
\(\delta\),
\begin{equation}\label{eq:endpoint-bound}
 \mathbb P(\mathcal B_{i,a_i^\delta}\cup\mathcal B_{i,b_i^\delta})
 \le C_M(h/L)^M.
\end{equation}
\end{claim}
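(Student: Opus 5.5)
We treat the start endpoint \(e=a_i^\delta\) and the terminal endpoint \(e=b_i^\delta\) separately; the terminal case reduces to the initial one by reversal.

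\emph{Start endpoint.} Write \(x=a_i^\delta\), and recall \(\rho=8h\), \(R=L/2\). Since \(R+\delta\le d_i^0/4\) by \eqref{eq:endpoint-uniform-separation}, the bridge leaves \(\B_\delta(x,R)\) before reaching \(b_i^\delta\). Because \(\eta_i\) ends at \(b_i^\delta\), \(\eta_i\) itself is not contained in \(\B_\delta(x,R)\).

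Next we relate \(\eta_i\) to an initial segment. By \cref{lem:last-exit-prefix}, applied at the first vertex \(u\) of \(\eta_i\) outside \(\B_\delta(x,R)\), there is a time \(\sigma\le N_i\) with \(X_\sigma=u\). The loop erasure \(\LE(X^i[0,\sigma])\) is then the initial segment of \(\eta_i\) up to \(u\), so its support lies in \(\supp(\eta_i)\) and is not contained in \(\B_\delta(x,R)\).

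Now take \(z\) with \(|z-x|\le 3h<\rho/2\). Then \(\B_\delta(z,L)\supset\B_\delta(x,R)\), so a walk from \(z\) reaches distance \(R\) from \(x\) before time \(\tau_{z,L}\). Hence avoiding \(\supp(\eta_i)\) up to \(\tau_{z,L}\) implies avoiding the annular part of \(\LE(X^i[0,\sigma])\) up to \(\tau_{x,R}\). It follows that on \(\mathcal B_{i,x}\),
\[
\Psi^{\mathrm{cen}}_{x;\rho,R}(\LE(X^i[0,\sigma]))>B_M(h/L)^{\kappa_{\mathrm e,M}}=A^{\mathrm e}_M(\rho/R)^{\kappa_{\mathrm e,M}}.
\]

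\emph{Transfer to the bridge.} Let \(T\) be the first time at which \(\LE(X[0,T])\not\subset\B_\delta(x,R)\) and \(\Psi^{\mathrm{cen}}\) exceeds the threshold. This is a stopping time. At time \(T\) the walk is at distance at least \(R\) from \(x\), so at distance at least \(d_i^0/4\) from \(b_i^\delta\).

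\cref{clm:stopped-Green} bounds the bridge probability of \(\{T<N_i\}\) by
\[
\frac{\sup_{|y-b_i^\delta|\ge d_i^0/4}\mathcal G_{D^\delta}(y,b_i^\delta)}{\mathcal G_{D^\delta}(a_i^\delta,b_i^\delta)}\,\mathbb P^{x}(T<\tau_{D^\delta}).
\]
By \eqref{eq:green-3d-bounds}, the Green-function ratio is bounded by a constant depending on \(d_i^0\). The Green upper bound needs both points in a compact \(U\Subset D\), which is where the endpoint-to-boundary distances matter; if \(y\) is near \(\partial D\), the Green function is only smaller, by the maximum principle. \cref{clm:centered-prefix} gives \(\mathbb P^x(T<\tau_{D^\delta})\le C_M(\rho/R)^M=C_M16^M(h/L)^M\). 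This proves the bound for \(\mathcal B_{i,a_i^\delta}\).

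\emph{Terminal endpoint.} Here we use reversibility. The reversal of a normalized bridge \(a\to b\) is a normalized bridge \(b\to a\), because \(Q^\delta\) is symmetric. For loop erasure the reversal fact we need is Lawler's theorem: \(\LE\) of the reversed walk has the same law as the reverse of \(\LE\). This applies to the finite-weight path measure restricted to paths from \(a\) to \(b\), since it is invariant under path reversal. Hence \(\supp(\eta_i)\) has the same law as the support of the loop erasure of a bridge from \(b_i^\delta\) to \(a_i^\delta\). Since \(\mathcal B_{i,b_i^\delta}\) depends only on \(\supp(\eta_i)\), the start-endpoint argument applied to the reversed bridge bounds it. A union bound gives \eqref{eq:endpoint-bound}.

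\emph{Main obstacle.} The main obstacle is the geometric containment in the first step. We must check that the supremum over \(z\) near \(e\) with exit radius \(L\) from \(z\) is dominated by the centered functional at \(x\) with radii \(\rho,R\). This is what the choices \(3h<\rho/2\) and \(R=L/2\) guarantee, together with \(\delta\) small relative to \(h\) from the scale ordering.
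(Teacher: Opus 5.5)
\textbf{There is a genuine gap in your transfer to the bridge.} Your reduction to the centered functional and your reversal step for \(b_i^\delta\) are fine and follow the paper. The problem is the stopping time. Your \(T\) is the first time with \(\supp\LE(X[0,T])\not\subset\B_\delta(x,R)\) and \(\Psi^{\mathrm{cen}}\) above threshold, and it imposes no condition on the position \(X_T\).

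The loop-erased path can leave the ball long before \(T\). The functional \(\Psi^{\mathrm{cen}}\) can then cross the threshold later, when the walk hits a vertex of its current loop erasure lying outside the ball and erases a subsequent annular re-entry. At that moment the walk can be anywhere in \(D^\delta\), including next to \(b_i^\delta\).

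So the assertion ``at time \(T\) the walk is at distance at least \(R\) from \(x\)'' is unjustified. Even if it held, a lower bound on \(|X_T-x|\) gives no lower bound on \(|X_T-b_i^\delta|\). What you need is the upper bound \(|X_T-x|\le R+\delta\), combined with \eqref{eq:endpoint-uniform-separation}. Without it, the factor \(\mathcal G_{D^\delta}(X_T,b_i^\delta)\) in \eqref{eq:stopped-Doob-exact} is only bounded by \(\mathcal G_{D^\delta}(b_i^\delta,b_i^\delta)=O(1)\). Dividing by \(\mathcal G_{D^\delta}(a_i^\delta,b_i^\delta)\asymp\delta\) then loses a factor \(\delta^{-1}\), which ruins the bound as \(\delta\downarrow0\). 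Your supremum over \(|y-b_i^\delta|\ge d_i^0/4\) simply does not cover the actual stopped positions.

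\textbf{The fix, which is what the paper does,} is to add the shell condition \(X_T\in\B_\delta(x,R+\delta)\setminus\B_\delta(x,R)\) to the definition of the stopping time; this is \(\tau^{\mathrm{end}}_{i,a}\).
\begin{itemize}
\item The witness time \(\sigma\) from \cref{lem:last-exit-prefix} still qualifies. Indeed \(X_\sigma=u\) is the first vertex of \(\eta_i\) outside the ball, so the nearest-neighbour property gives \(R<|u-x|\le R+\delta\).
\item Hence \(\mathcal B_{i,x}\subset\{T\le\sigma<N_i\}\). Here \(\sigma<N_i\) rather than your \(\sigma\le N_i\), because \(u\ne b_i^\delta\); this is needed to land in \(\{T<N_i\}\).
\item The event \(\{T<\tau_{D^\delta}\}\) is still a subevent of the one in \cref{clm:centered-prefix}.
\item The stopped point now satisfies \(|X_T-b_i^\delta|\ge d_i^0/4\), so the Doob factor is bounded by a constant.
\end{itemize}
You cannot use \(\sigma\) itself in \eqref{eq:stopped-Doob-exact}, since it is defined through the final loop erasure and is not a stopping time. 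The shell condition is exactly what lets a genuine stopping time dominate \(\sigma\) while keeping the stopped point localized. This, rather than the geometric containment you flagged, is the delicate step of the claim.
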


\begin{proof}
We first treat \(a=a_i^\delta\), \(b=b_i^\delta\), and suppress \(i\).
The scale choice in Step~1 ensures
\begin{equation}\label{eq:endpoint-geometric-room}
 B(a,L)\Subset D,\qquad |a-b|>4L.
\end{equation}
Since \(\eta_i\) joins \(a\) to \(b\), it leaves the closed lattice ball
\(\B_\delta(a,R)\), where \(R=L/2\).  Let
\begin{equation*}
 k=\inf\{j:\eta_i(j)\notin\B_\delta(a,R)\},\qquad u=\eta_i(k).
\end{equation*}
The
nearest-neighbour property gives
\begin{equation}\label{eq:first-exit-shell}
 R<|u-a|\le R+\delta.
\end{equation}
By \cref{lem:last-exit-prefix}, applied to the final loop erasure, there
is a time index \(\sigma\) such that
\begin{equation}\label{eq:endpoint-last-exit-witness}
 \omega_{i,\sigma}=u,\qquad
 \LE(\omega_i[0,\sigma])=\eta_i[0,k].
\end{equation}
Because \(u\ne b\) by \eqref{eq:endpoint-geometric-room},
\(\sigma<N_i\).

Suppose \(\mathcal B_{i,a}\) occurs.  The supremum in
\eqref{eq:endpoint-bad} is over a finite set, so there is
\(z\in D^\delta\), \(|z-a|\le3h\), for which
\begin{equation}\label{eq:endpoint-witness-z}
 \mathbb P^z\big(S[0,\tau_{z,L}]\cap\supp(\eta_i)=\varnothing\big)
 >B_M(h/L)^{\kappa_{\mathrm e,M}}.
\end{equation}
The scale separation gives
\begin{equation}\label{eq:endpoint-ball-inclusions}
 z\in\B_\delta(a,4h)=\B_\delta(a,\rho/2),\qquad
 \B_\delta(a,R)\subset\B_\delta(z,L),
\end{equation}
because \(R+3h<L\).  Let
\[
 \mathcal A_z=\{S[0,\tau_{z,L}]\cap\supp(\eta_i)=\varnothing\}.
\]
On \(\mathcal A_z\), the walk avoids every subset of \(\eta_i\) up to the
earlier exit time \(\tau_{a,R}\).  In particular,
\begin{equation}\label{eq:endpoint-event-inclusion-detailed}
 \mathcal A_z\subset\big\{
 S[0,\tau_{a,R}]\cap\supp(\eta_i[0,k])
 \cap(\B_\delta(a,R)\setminus\B_\delta(a,\rho))
 =\varnothing\big\}.
\end{equation}
Here \(\tau_{a,R}\le\tau_{z,L}\) follows from the second inclusion in
\eqref{eq:endpoint-ball-inclusions}.  Combining
\eqref{eq:endpoint-witness-z}--\eqref{eq:endpoint-event-inclusion-detailed}
and then taking the supremum over starts in \(\B_\delta(a,\rho/2)\) gives
\begin{equation}\Psi^{\mathrm{cen}}_{a;\rho,R}
       (\LE(\omega_i[0,\sigma]))\ge \mathbb P^z(\mathcal A_z)
 >B_M(h/L)^{\kappa_{\mathrm e,M}}
 =A_M^{\mathrm e}(\rho/R)^{\kappa_{\mathrm e,M}}.
 \label{eq:endpoint-forces-centered}
\end{equation}

Define the random-walk stopping time
\begin{multline*}
 \tau^{\mathrm{end}}_{i,a}(\xi)=\inf\big\{T\ge0:\
  \xi_T\in\B_\delta(a,R+\delta)\setminus\B_\delta(a,R),\\
  \supp\LE(\xi[0,T])\not\subset\B_\delta(a,R),\
  \Psi^{\mathrm{cen}}_{a;\rho,R}(\LE(\xi[0,T]))
     >A_M^{\mathrm e}(\rho/R)^{\kappa_{\mathrm e,M}}\big\}.
\end{multline*}
It is a stopping time, since every object tested at \(T\) is determined
by \(\xi[0,T]\).  Equations
\eqref{eq:first-exit-shell}, \eqref{eq:endpoint-last-exit-witness}, and
\eqref{eq:endpoint-forces-centered} show that
\begin{equation}\label{eq:endpoint-event-inclusion}
 \mathcal B_{i,a}
 \subset\{\tau^{\mathrm{end}}_{i,a}(\omega_i)
              \le\sigma<N_i\}.
\end{equation}

The event that defines \(\tau^{\mathrm{end}}_{i,a}\) is a subevent of
the event in \cref{clm:centered-prefix}.  Therefore
\begin{equation}\label{eq:endpoint-raw-bound}
 \mathbb P^a(\tau^{\mathrm{end}}_{i,a}<\tau_{D^\delta})
 \le C_M(\rho/R)^M
 \le C_M(h/L)^M,
\end{equation}
where \(16^M\) has been absorbed in \(C_M\).  At the stopped point
\(x\), \(|x-a|\le R+\delta\), and
\eqref{eq:endpoint-uniform-separation} gives
\begin{equation*}
 |x-b|\ge |a-b|-|x-a|
 \ge\frac12d_i^0-(R+\delta)\ge\frac14d_i^0.
\end{equation*}
Consequently \eqref{eq:green-3d-bounds} gives
\begin{equation}\label{eq:endpoint-Doob-factor}
 \sup_{x\in\B_\delta(a,R+\delta)}
 \frac{\mathcal G_{D^\delta}(x,b)}
      {\mathcal G_{D^\delta}(a,b)}\le C.
\end{equation}
In particular \(x\ne b\), so the correction term in
\eqref{eq:stopped-Doob-exact} vanishes.  Applying
\cref{clm:stopped-Green} to
\eqref{eq:endpoint-event-inclusion} and using 
\eqref{eq:endpoint-raw-bound} and~\eqref{eq:endpoint-Doob-factor} yields
\begin{equation}\label{eq:first-endpoint-bound}
 \mathbb P_{D^\delta}^{a,b}(\mathcal B_{i,a})
 \le C_M(h/L)^M.
\end{equation}

For the other endpoint, use the standard reversal identity for LERW
under a symmetric kernel; it follows directly from the one-path formula
in \cite[Propositions~3.1 and~3.5]{Lawler18}:
\begin{equation}\label{eq:LERW-reversal-law}
 \LE(\omega)^\leftarrow\text{ under }\mathbb P_{D^\delta}^{a,b}
 \quad\stackrel d=\quad
 \LE(\widetilde\omega)\text{ under }\mathbb P_{D^\delta}^{b,a}.
\end{equation}
Since $\mathcal B_{i,b}$ depends only on the trace, it is the the corresponding bad event near the starting
endpoint for the reversed path. Applying
\eqref{eq:first-endpoint-bound} with $a,b$ interchanged gives
$\mathbb P_{D^\delta}^{a,b}(\mathcal B_{i,b})\le C_M(h/L)^M$.
Together with the estimate for the starting endpoint, this proves the claim.
\end{proof}

\emph{Step 4: transfer from ordinary initial segments to the entangled
multipath LERW.}
For each vertex \(w\in C_r^\delta\), \cref{lem:prefix-witness} already
provides \(i,T\) with
\(\omega_{i,T}=w\) and
\(\supp\LE(\omega_i[0,T])\subset C_r^\delta\).
If \(w\ne b_i^\delta\), then \(T<N_i\).

Let \(\mathsf{Bad}\) be the union, over \(i\), of
\(\mathcal B_{i,a_i^\delta}\), \(\mathcal B_{i,b_i^\delta}\), and all
\(\{\tau_Q(\omega_i)<N_i\}\) with \(Q\) \(i\)-far in the sense of
\eqref{eq:i-far}.  By \eqref{eq:far-cube-bound},
\eqref{eq:endpoint-bound}, and \eqref{eq:cube-cover-count},
\begin{equation}\label{eq:bad-union}
 \mathbb P(\mathsf{Bad})
 \le C_{M,r}\left((h/L)^M+s^{-3}h^{-1}(s/h)^M\right).
\end{equation}
The next claim combines the off-center and endpoint estimates to control avoidance of the entire entangled range.

\begin{claim}
\label{clm:good-event-geometry}
On \(\mathsf{Bad}^c\),
\begin{equation*}
 \cH_\delta(C_r^\delta;s,L;U)
 \le C_M\max\{(s/h)^{\kappa_M},
                 (h/L)^{\kappa_{\mathrm e,M}}\}.
\end{equation*}
\end{claim}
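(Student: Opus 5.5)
Fix \(z\in U\cap\delta\Z^3\) with \(\dist(z,C_r^\delta)\le s\). We will show that on \(\mathsf{Bad}^c\),
\[
 \mathbb P^z\big(S[0,\tau_{z,L}]\cap C_r^\delta=\varnothing\big)
 \le C_M\max\{(s/h)^{\kappa_M},(h/L)^{\kappa_{\mathrm e,M}}\}.
\]
Choose a vertex \(w\in C_r^\delta\) with \(|w-z|\le s+\delta\). By \cref{lem:prefix-witness} there are \(i\) and \(T\) with \(\omega_{i,T}=w\) and \(\supp\LE(\omega_i[0,T])\subset C_r^\delta\). Since hitting a subset forces hitting \(C_r^\delta\), it suffices to bound the probability of avoiding a suitable subset of \(C_r^\delta\). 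We split according to the position of \(w\) relative to the endpoints \(a_i^\delta,b_i^\delta\).

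\emph{Case 1: \(w\) lies within \(3h-2s\) of \(a_i^\delta\) or \(b_i^\delta\).} Then \(|z-e|\le3h\) for the corresponding endpoint \(e\). Since \(\supp(\eta_i)\subset\supp(\omega_i)\), we want to use \(\supp(\eta_i)\subset C_r^\delta\). This holds because \(\eta_i\) is the terminal path of the record \(H_i\), and \(\Phi_{C_{i-1}}\) always keeps \(\eta\). On \(\mathcal B_{i,e}^c\) we therefore get the bound \(B_M(h/L)^{\kappa_{\mathrm e,M}}\) directly from \eqref{eq:endpoint-bad}.

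\emph{Case 2: \(w\) is farther than \(3h-2s\) from both endpoints.} Let \(Q\in\mathcal Q_\delta\) be the cube containing \(w\). Such a cube exists because \(w\) lies within \(s+\delta\) of \(U\). Then \(|v_Q-e|>3h-3s>2h\) for both endpoints, so \(Q\) is \(i\)-far. Moreover \(w\ne b_i^\delta\), so \(T<N_i\). On \(\{\tau_Q(\omega_i)<N_i\}^c\) we have \(\tau_Q(\omega_i)>T\) while \(\omega_{i,T}\in Q\), hence \(\Psi_Q(\LE(\omega_i[0,T]))\le A_M(s/h)^{\kappa_M}\).

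To use this bound we check the geometry. First, \(|z-v_Q|\le|z-w|+|w-v_Q|\le 2s+\delta<3s\), so \(z\in\B_\delta(v_Q,3s)\). Second, \(B(v_Q,h/2)\subset B(z,L)\), so \(\tau_{v_Q,h/2}\le\tau_{z,L}\). Third, \(\Lambda_Q(\LE(\omega_i[0,T]))\subset\supp\LE(\omega_i[0,T])\subset C_r^\delta\). Together these give avoidance probability at most \(A_M(s/h)^{\kappa_M}\).

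Taking the supremum over \(z\) and combining the two cases yields the claim, with \(C_M=\max\{A_M,B_M\}\). The main point requiring care is Case~2. There, the time \(T\) supplied by \cref{lem:prefix-witness} need not be the first visit to \(Q\), so we must argue through the stopping time \(\tau_Q\) rather than through a first hitting time. This works precisely because \(\tau_Q\) tests the bad condition at every visit to \(Q\).
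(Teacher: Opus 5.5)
Your proof is correct and follows essentially the same route as the paper. You choose a witness $w$ via \cref{lem:prefix-witness}. For $i$-far cubes you use the stopping time $\tau_Q$ on $\mathsf{Bad}^c$, together with $|z-v_Q|<3s$ and $\B_\delta(v_Q,h/2)\subset\B_\delta(z,L)$. Near the endpoints you use $\mathcal B_{i,e}^c$ together with $\supp\eta_i\subset C_r^\delta$.

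The only differences are cosmetic. You split cases by $|w-e|\le 3h-2s$ rather than directly by whether $Q$ is $i$-far. You also verify explicitly that the cube containing $w$ lies in $\mathcal Q_\delta$, which the paper leaves implicit.
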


\begin{proof}
Fix \(z\in U\cap\delta\Z^3\) within distance \(s\) of \(C_r^\delta\),
and choose a vertex \(w\in C_r^\delta\) with \(|z-w|\le s+\delta\).
By \cref{lem:prefix-witness}, some
\(\gamma=\LE(\omega_i[0,T])\) ends at \(w\) and satisfies
\(\supp\gamma\subset C_r^\delta\).  For the cube \(Q\) containing
\(w\), we have \(|z-v_Q|<3s\).

If \(Q\) is \(i\)-far, then \(w\ne b_i^\delta\), hence \(T<N_i\).
On \(\mathsf{Bad}^c\), the definition of the first bad time
\(\tau_Q\) therefore gives
\(\Psi_Q(\gamma)\le A_M(s/h)^{\kappa_M}\).
Since \(\Lambda_Q(\gamma)\subset C_r^\delta\) and
\(\B_\delta(v_Q,h/2)\subset\B_\delta(z,L)\), this bounds the
probability of avoiding \(C_r^\delta\) until \(\tau_{z,L}\).

Otherwise \(|z-e|<3h\) for an endpoint
\(e\in\{a_i^\delta,b_i^\delta\}\).  The complement of
\(\mathcal B_{i,e}\) bounds that probability by
\(B_M(h/L)^{\kappa_{\mathrm e,M}}\), because
\(\supp\eta_i\subset C_r^\delta\).
Taking the supremum over \(z\) proves the claim.
\end{proof}

\emph{Step 5: exponent bookkeeping and conclusion.}
We now make the earlier choice \(M=12\).  From
\eqref{eq:three-scales} and \eqref{eq:scale-ratios},
\begin{equation*}
 (h/L)^{12}=\eps^9,
\end{equation*}
whereas the number of cubes and the \(h^{-1}\) cost of the Doob
transform give
\[
 s^{-3}h^{-1}(s/h)^{12}=R_*^{-4}\eps^{-6}\eps^{-5/4}\eps^9=R_*^{-4}\eps^{7/4}.
\]
Since \(R_*\) is fixed, \eqref{eq:bad-union} becomes
\begin{equation}\label{eq:bad-final}
 \sup_{0<\delta<\delta_0(\eps)}
 \mathbb P(\mathsf{Bad})
 \le C(M,r,R_*)(\eps^9+\eps^{7/4})\longrightarrow0.
\end{equation}
On \(\mathsf{Bad}^c\), \cref{clm:good-event-geometry} and
\eqref{eq:scale-ratios} give
\[
 \cH_\delta(C_r^\delta;s,L;U)\le C\eps^\alpha,
\]
where
\begin{equation*}
 \alpha=\frac34\min\{\kappa_{12},\kappa_{\mathrm e,12}\}>0.
\end{equation*}
Set \(\eta_H=\alpha/2\).  There is \(\eps_0>0\), independent of
\(\delta\), such that \(C\eps^\alpha\le\eps^{\eta_H}\) whenever
\(\eps<\eps_0\).  Hence, for every such \(\eps\) and all
\(\delta<\delta_0(\eps)\),
\[
 \left\{\cH_\delta(C_r^\delta;\eps^2R_*,
             \sqrt\eps R_*;U)>\eps^{\eta_H}\right\}
 \subset\mathsf{Bad}.
\]
Take first \(\limsup_{\delta\downarrow0}\), use
\eqref{eq:bad-final}, and then let \(\eps\downarrow0\).  This is exactly
\eqref{eq:H2}.
\end{proof}

\section{Stability of loop decoration}\label{sec:decoration}
In this section, we use the hittability estimate of
\cref{prop:multipath-hittability} to obtain the near-miss estimate of
\cref{cor:no-near-miss}. This estimate in turns verifies the stability of loop decoration
under Hausdorff convergence in \cref{prop:common-decoration-stability},
allowing finite recovery identities such as \eqref{eq:finite-transform}
to pass to the limit through \cref{cor:recovery-transfer}. This provides the three-dimensional counterpart of~\cite[Section 3]{Cai26}.

The following arguments adapt the loop-soup coupling and recovery proof of
\cite[Theorem~2.2, Propositions~5.1--5.2 and Section~5.2]{SS18}.
The extension needed here is to arbitrary targets satisfying local
hittability, including targets that reach the boundary; the soup is
always independent of the target. We record the common statement once,
for use with both entangled multipath LERW and loop-catchers.

We first couple the discrete and Brownian soups so that their macroscopic loops can be matched.
\begin{lemma}
\label{lem:macro-soup-coupling}
Let \(D\subset\R^3\) be a bounded connected domain with Lipschitz boundary.  Fix
\(0<t_0<T<\infty\) and \(a>0\).  There is a coupling of
the rescaled random-walk loop soup and Brownian loop soup such that, with
probability tending to one as \(\delta\downarrow0\):
\begin{enumerate}[label=\textnormal{(\alph*)},leftmargin=2.4em]
\item every loop contained in the corresponding domain, with duration in
  \([t_0,T]\) and diameter at least \(a\), has a unique partner;
\item paired traces have Hausdorff distance at most
  \(\gamma_\delta\), where \(\gamma_\delta\to0\);
\item for each matched pair considered above, the discrete
loop lies in $D^\delta$ if and only if the Brownian loop
lies in $D$, except on an event whose probability tends
to zero.
\end{enumerate}
The number of loops in the stated window is almost surely finite.  The
coupling may be chosen independently of any prescribed random target.
\end{lemma}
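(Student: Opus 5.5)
We will realize both soups as Poisson point processes and couple their intensity measures restricted to the window
\[
 \mathcal W=\{\text{loops of duration in }[t_0,T],\ \diam\ge a\},
\]
following the rooted-loop approach of Lawler--Trujillo Ferreras adapted to three dimensions, as used in \cite[Section~5]{SS18}. The plan is to work first with the unrestricted soups in the ambient box \(J_D\) (or in \(\R^3\) intersected with a large ball), and to obtain the domain-restricted soups by thinning, keeping only loops contained in \(D\), respectively in \(D^\delta\). On \(\mathcal W\) the Brownian loop measure has finite total mass. The rescaled random-walk loop measure restricted to lattice loops of length in \([3t_0\delta^{-2},3T\delta^{-2}]\) with rooted at vertices of a bounded region likewise has uniformly bounded mass. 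Hence both soups restricted to the window have almost surely finitely many loops, which gives the finiteness assertion.

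The key analytic step is convergence of the restricted intensity measures. The rooted discrete measure is
\[
 \sum_z\sum_n \frac1n\,p^\delta_n(z,z)\,\mathbf P^{\delta,n}_{z,z}.
\]
By the periodic local CLT \eqref{eq:killed-lclt} (unkilled version), the weights \(\frac1n p^\delta_n(z,z)\), summed over admissible \(n\) and over \(z\) in a cell, converge to \(\frac1t p(t,z,z)\,dt\,dz\). The factor \(2\) from periodicity cancels against the mesh \(2\delta^2/3\), exactly as in \eqref{eq:green-normalization}. By the bridge convergence argument of \cref{lem:green-path-limit} (fixed-duration part), \(\mathbf P^{\delta,n}_{z,z}\) converges to \(\mathbf P^t_{z,z}\) in the uniform topology. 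Pushing forward to unrooted loops and restricting to \(\diam\ge a\) then gives weak convergence of finite measures, with total masses converging. The set \(\{\diam=a\}\) and the duration endpoints are null sets for the limit, so the restriction is harmless. By Skorokhod's representation theorem applied to the normalized measures, together with Poisson thinning/superposition to match the total masses, we obtain a coupling of the two Poisson processes. In this coupling, with probability tending to one, the point counts agree and the atoms can be paired with uniform distance \(\gamma_\delta\to0\) after time rescaling. This yields (a) and (b) for the unrestricted soups, since traces are \(1\)-Lipschitz in the uniform metric for Hausdorff distance. Because the construction uses only the soups, it is independent of any prescribed target.

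The main obstacle is (c): restriction to \(D\) versus \(D^\delta\) must commute with the pairing. A paired pair disagrees only if the Brownian loop comes within \(\gamma_\delta+\delta\) of \(\partial D\) without exiting, or exits by less than that amount. We plan to show that the Brownian loop measure of
\[
 \{\ell\in\mathcal W:\ \dist(\ell,\partial D)\le\eta\ \text{and}\ \ell\subset \overline{D}^{+\eta}\}
\]
tends to \(0\) as \(\eta\downarrow0\). The argument has two parts. First, for an exterior-cone (Lipschitz) domain, a Brownian path that comes within \(\eta\) of \(\partial D\) exits \(D\) within a further distance \(\eta^{1/2}\) with probability \(1-O(\eta^{c})\), by the continuum analogue of \eqref{eq:boundary-escape}; applied to the bridge via absolute continuity on a half-duration, this controls loops that touch the boundary but stay inside \(D\). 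Second, the symmetric estimate for loops exiting \(D\) only by a small amount follows by applying the cone condition to the interior cone for \(D^c\)-excursions, or more simply because the event that \(\max\dist\) outside \(D\) lies in \((0,\eta]\) decreases to a null set. Finally, the discrete restriction \(\ell\subset D^\delta\) differs from \(\tr\ell\subset D\) only through \(O(\delta)\) edge effects, which the same estimate absorbs. Choosing \(\eta=\eta_\delta\downarrow0\) slowly relative to \(\gamma_\delta\) then gives (c).
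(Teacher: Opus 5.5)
Your proof is correct, but it takes a different route from the paper's.

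\textbf{The paper's route.} The paper does not construct the coupling. It quotes the strong Lawler--Trujillo Ferreras type coupling of \cite[Theorem~2.2]{SS18} for the full-space soups. It then splits (c) into two one-sided near-boundary estimates, one on each side.
\begin{itemize}
\item A Brownian window loop in $D$ whose partner leaves $D^\delta$ comes within $\gamma_\delta+\delta$ of $\partial D$. The mass of such loops tends to zero by continuity from above, because $\{\ell\subset D:\dist(\ell,\partial D)\le\eta\}$ decreases to $\varnothing$ by compactness; no boundary regularity is needed here.
\item A discrete loop in $D^\delta$ whose partner leaves $D$ must approach $\partial D$. This is excluded by the discrete collar estimate \cite[Proposition~5.1]{SS18}, extended to Lipschitz domains through \eqref{eq:boundary-escape}.
\end{itemize}

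\textbf{Your route.} You build a purely qualitative coupling.
\begin{itemize}
\item Weak convergence of the windowed rooted intensities comes from the local CLT normalization and fixed-duration bridge convergence. Duration endpoints, $\{\diam=a\}$ and the boundary of the root region are null for the limit.
\item You add a Poisson count coupling, Skorokhod's representation for the i.i.d.\ atoms, and thinning via the restriction property of both loop measures.
\item You put both directions of (c) on the Brownian side. Any disagreement forces the Brownian partner into $E_\eta=\{\ell\in\mathcal W:\dist(\ell,\partial D)\le\eta,\ \ell\subset\overline D^{+\eta}\}$ with $\eta=\gamma_\delta+\delta$.
\end{itemize}
This is more elementary and self-contained, and it avoids the discrete boundary estimate entirely. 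The price is that $\gamma_\delta\to0$ comes with no rate and only for the fixed window. That is all the lemma asserts and all its later qualitative uses require.

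\textbf{One step to tighten.} $E_\eta$ decreases to $E_0=\{\ell\in\mathcal W:\ell\cap\partial D\neq\varnothing,\ \ell\subset\overline D\}$. Neither of your two parts, as phrased, shows that $E_0$ is null.
\begin{itemize}
\item The piece $\{0<\max_{x\in\ell}\dist(x,D)\le\eta\}$ decreases to the empty set, but it does not contain $E_0$.
\item Part (i) only says the loop leaves $D$, which loops in $E_0$ already do.
\end{itemize}
What you need is that boundary points of a Lipschitz domain are regular for the open exterior $\overline D^{\,c}$. This follows from the exterior cone and Blumenthal's $0$--$1$ law, and transfers to loops by absolute continuity on a half-duration together with time reversal. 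Hence a.e.\ loop touching $\partial D$ enters $\overline D^{\,c}$.

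This is exactly where the cone condition enters your argument; the paper uses it on the discrete side instead. Once $\mu(E_0)=0$, continuity from above on the finite-mass window gives $\mu(E_\eta)\to0$ directly, and the quantitative cone estimate in part (i) is no longer needed.
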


\begin{proof}
Apply \cite[Theorem~2.2]{SS18} with mesh \(\delta\) and, for example,
its exponent \(\theta=1\). On a fixed positive duration window, it
pairs the full-space loops with vanishing duration and uniform spatial
errors. Restriction to \(D\) follows from
\cite[Proposition~5.1]{SS18}: macroscopic discrete loops contained in
\(D^\delta\) but approaching \(\partial D\) have vanishing mass.
The extension to Lipschitz domains follows from \eqref{eq:boundary-escape}, as explained in \cref{sec:domain-extension}.
For Brownian loops the corresponding assertion follows by continuity
from above, since the window has finite mass and every loop in \(D\)
is compactly contained there. These observations remove mismatched
pairs at the boundary. The duration measure has no atoms, so shifting
the duration cutoffs by the coupling error is harmless. Local finiteness
and independence from any target are part of this Poisson construction.
\end{proof}

For a discrete loop \(\ell\), let \(t_\delta(\ell)\) be its diffusive
duration.  Given \(0<t_0<T<\infty\), write
\(m_{D^\delta}^{\delta,[t_0,T]}\) for random-walk loop measure
restricted to this duration window, and
\(\mu_D^{\mathrm{loop},[t_0,T]}\) for its Brownian counterpart.

The next lemma bounds the mass of loops that approach a hittable target without hitting it.
\begin{lemma}\label{lem:near-miss}
Let \(U_0\Subset U_1\Subset D\).  There is a finite constant \(C\), depending
only on \(t_0,T,U_0,U_1,D\), with the following property.  Suppose a
random lattice set \(C^\delta\subset U_0\) is independent of the test loop,
and define the target-measurable event
\begin{equation}\label{eq:H-event}
 E^\delta:=\left\{
 \sup_{\substack{z\in U_1\cap\delta\Z^3\\
                  \dist(z,C^\delta)\le 2s}}
 \mathbb P^z\!\left(S[0,\tau_{z,L}]\cap C^\delta=\varnothing\right)
 \le q\right\}.
\end{equation}
Thus \(E^\delta\in\sigma(C^\delta)\).  Assume \(0<s<L\),
\(U_0^{+3L}\Subset U_1\), and \(\delta=o(s)\).  Then, on \(E^\delta\),
\begin{equation}\label{eq:near-miss-mass}
 m_{D^\delta}^{\delta,[t_0,T]}
 \{\ell:\dist(\tr\ell,C^\delta)\le s,
           \ \tr\ell\cap C^\delta=\varnothing\}
 \le C\left(q+\frac{L^2}{t_0}\right)+o_\delta(1).
\end{equation}
The same bound holds after imposing any further trace-measurable
restriction on the loops.
It also holds for an arbitrary target \(C^\delta\subset D^\delta\)
if only loops whose entire trace lies in \(U_0\) are counted, with the
same hittability event \(E^\delta\).
\end{lemma}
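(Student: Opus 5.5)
The plan is to bound the near-miss mass by rooting each loop at a point where it comes close to the target, and then using the hittability event \(E^\delta\) to show that such a rooted loop avoids \(C^\delta\) only with small probability.

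Every loop \(\ell\) in the window with \(\dist(\tr\ell,C^\delta)\le s\) has a vertex \(z\) with \(\dist(z,C^\delta)\le s+\delta\le 2s\). We root \(\ell\) at the first such vertex in a canonical cyclic order. Because of the \(1/n\) factor in the loop measure, this rooting is legitimate: the unrooted mass of the event is at most the rooted measure \(\sum_{z}\sum_n Q^{\delta,n}(z,z)\mathbf P^{\delta,n}_{z,z}(\cdot)\), where \(z\) ranges over vertices within \(2s\) of \(C^\delta\) and the path is required to avoid \(C^\delta\). Equivalently, we bound by the number of candidate roots, weighted by bridge probabilities.

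For a rooted loop from \(z\) of duration \(t\in[t_0,T]\), we split according to whether it exits \(\B_\delta(z,L)\) before time \(t/2\). If it does not, then by the exit estimate this costs at most \(CL^2/t_0\). This estimate applies to the bridge because the bridge density on the first half is bounded by \(C_{t_0,T}\), which follows from \eqref{eq:lclt} and \eqref{eq:killed-lclt}. If it does exit, then the initial segment \(S[0,\tau_{z,L}]\) must avoid \(C^\delta\), which has probability at most \(q\) on \(E^\delta\); again we use the bounded density and the strong Markov property at \(\tau_{z,L}\). Note that \(z\in U_1\), since \(C^\delta\subset U_0\) and \(2s<3L\).

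Summing, the mass is at most
\[
 C\sum_{z:\ \dist(z,C^\delta)\le2s}\ \sum_{n\delta^2/3\in[t_0,T]}Q^{\delta,n}(z,z)\,(q+L^2/t_0).
\]
The main obstacle is that this naive sum over roots gives a factor \(\#\{z\}\cdot \delta^3\), which is only \(O(|U_1|)\), so it is bounded but not in terms of \(s\) alone. That is acceptable here: \(\sum_{z\in U_1}\sum_n Q^{\delta,n}(z,z)\) over the window converges to \(\int_{U_1}\int_{t_0}^T t^{-1}p_D(t,z,z)\,\dd t\,\dd z\). The constant \(C\) therefore depends on \(t_0,T,U_1,D\), and the discretization error gives the \(o_\delta(1)\) term.

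Further trace-measurable restrictions only shrink the event, so the same bound holds. For an arbitrary target with loops contained in \(U_0\), the roots lie in \(U_0\subset U_1\), and the same argument applies verbatim, using \(E^\delta\) only at those roots.
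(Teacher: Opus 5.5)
There is a genuine gap, and it is in the root bookkeeping; your per-root estimate itself is fine. Re-rooting at a vertex within \(2s\) of \(C^\delta\) lets you drop the weight \(1/n\), and you then bound the mass by \(C(q+L^2/t_0)\sum_z\sum_n Q^{\delta,n}(z,z)\), with \(z\) ranging over that neighbourhood. This sum is not bounded.

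\begin{itemize}
\item For \(z\in U_1\), \(n\delta^2/3\in[t_0,T]\) and admissible parity, \eqref{eq:lclt} and \eqref{eq:killed-lclt} give \(Q^{\delta,n}(z,z)\asymp\delta^3\).
\item There are \(\asymp\delta^{-2}\) such \(n\), so \(\sum_n Q^{\delta,n}(z,z)\asymp\delta\), not \(\delta^3\).
\item The \(2s\)-neighbourhood contains at least \(c(s/\delta)^3\) vertices. Your bound is therefore at least of order \(s^3\delta^{-2}(q+L^2/t_0)\), which diverges as \(\delta\downarrow0\).
\end{itemize}

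The limit \(\int_{U_1}\int_{t_0}^T t^{-1}p_D(t,z,z)\dd t\dd z\) that you quote is the limit of \(\sum_z\sum_n n^{-1}Q^{\delta,n}(z,z)\). Its factor \(t^{-1}\) is exactly the \(1/n\) you discarded. You also cannot keep \(1/n\) while restricting roots to the near set. That measure only counts the fraction of rootings of each loop that lie near \(C^\delta\), so it is not an upper bound for the unrooted mass. The loss is real: summing over all near roots counts each loop once per visit to the \(2s\)-neighbourhood, and such visits are typically of order \((s/\delta)^2\).

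The paper's proof avoids this by keeping the original root, anywhere in \(D^\delta\), together with the weight \(1/n\). It treats the first approach to the \(2s\)-neighbourhood as a stopping time of the rooted loop.

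\begin{itemize}
\item Time reversal preserves the trace, costs a factor two, and places that approach before time \(n/2\).
\item By the strong Markov property at the approach point \(z'\in U_1\), either the walk avoids \(C^\delta\) up to \(\tau_{z',L}\), with probability at most \(q\) on \(E^\delta\), or it fails to exit \(\B_\delta(z',L)\) within \(n/4\) steps, with probability at most \(CL^2/t_0\).
\item In either case at least \(n/4\) steps remain, and returning to the root costs \(Cn^{-3/2}\) by \eqref{eq:lclt}.
\item Summing \(n^{-1}\cdot C(q+L^2/t_0)\,n^{-3/2}\) over \(n\ge 3t_0\delta^{-2}\) and \(O(\delta^{-3})\) roots gives \(C t_0^{-3/2}(q+L^2/t_0)\).
\end{itemize}

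Your exit/hittability dichotomy is essentially the middle step of this argument. What is missing is that the near approach must occur at a random time of a loop with an arbitrary root, so that the factor \(1/n\) survives and the sum over all roots converges.
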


\begin{proof}
This is the rooted-loop argument of
\cite[proof of Proposition~5.2]{SS18}, with its LERW hittability input
replaced by \(E^\delta\). Condition on the target and keep the original
root and the factor \(1/n\) in loop measure. By time reversal, at a
cost of two we may assume the first approach occurs before \(n/2\).
After that approach, avoidance until exiting the ball of radius \(L\)
costs at most \(q\); failure to exit within \(n/4\) steps costs at most
\(CL^2/t_0\). In either case, a final interval of at least \(n/4\)
steps gives a return factor \(Cn^{-3/2}\). Thus the probability for a
loop rooted at any fixed vertex is at most
\[
 C\left(q+L^2/t_0\right)n^{-3/2}.
\]
Summing with weight \(1/n\) over \(O(\delta^{-3})\) roots and
\(3t_0\delta^{-2}\le n\le3T\delta^{-2}\) gives
\eqref{eq:near-miss-mass}. The condition \(\delta=o(s)\) allows us to
pass from the interpolated trace to a vertex within distance \(2s\).
For the localized version, the approach point lies in \(U_0\), which
is the only place where hittability is used.
\end{proof}

Whenever \(C_r^\delta\) appears below, it is the linearly interpolated trace
of the entangled multipath LERW constructed from a fixed finite family of
independent normalized random-walk bridges between lattice approximations
of pairs \((a_i,b_i)\) with \(a_i\ne b_i\); repeated pairs are allowed.
We write
\begin{equation*}
 C^{0,\delta}=\bigcup_{i=1}^r\tr(\omega_i^\delta)
\end{equation*}
for the union of those bridges.  The endpoint and scale setup is given in \cref{sec:hittability-input}.

The next corollary says that removing the duration and boundary cutoffs excludes macroscopic loops that approach a hittable target without hitting it.
\begin{corollary}
\label{cor:no-near-miss}
Let \(A^\delta\subset D^\delta\) be random vertex sets. Assume that, for
every \(U\Subset D\) and all sufficiently small fixed \(R_*>0\), there
is \(\alpha>0\) such that
\begin{equation}\label{eq:local-target-hittability}
 \lim_{\eps\downarrow0}\limsup_{\delta\downarrow0}
 \mathbb P\!\left(
 \cH_\delta(A^\delta;\eps^2R_*,\sqrt\eps R_*;U)
       >\eps^\alpha\right)=0.
\end{equation}
Then, for every \(a>0\), an independent intensity-one random-walk loop
soup satisfies
\begin{equation}\label{eq:common-close-miss}
 \lim_{s\downarrow0}\limsup_{\delta\downarrow0}
 \mathbb P\big(\exists\ell\in\mathcal L_{D^\delta}^{1,\delta}:
       \diam\ell\ge a,\ \dist(\tr\ell,A^\delta)\le s,
       \supp\ell\cap A^\delta=\varnothing\big)=0.
\end{equation}
In particular, this applies to the entangled multipath LERW by
\cref{prop:multipath-hittability}. 
\end{corollary}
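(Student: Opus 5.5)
We reduce \eqref{eq:common-close-miss} to the localized near-miss bound of \cref{lem:near-miss}, after stripping the loops outside a fixed duration window and those approaching \(\partial D\). Fix \(a>0\) and \(\theta>0\). First we choose cutoffs. Take \(t_0>0\) small and \(T\) large. The mass of loops with diameter at least \(a\) and duration below \(t_0\) is at most \(C\delta^{-3}\sum_{n\le 3t_0\delta^{-2}}n^{-1}\cdot n^{-3/2}e^{-ca^2/(n\delta^2)}\) by \eqref{eq:lclt}, which is \(O(e^{-c a^2/t_0})\) uniformly in \(\delta\). By \eqref{eq:discrete-spectral-bound}, the mass of loops with duration above \(T\) is \(O(e^{-cT})\). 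The boundary statement of \cref{sec:domain-extension} gives an \(\eta>0\) with
\[
 \limsup_{\delta\downarrow0} m^\delta_{D^\delta}\{\ell:\diam\ell\ge a,\ \dist(\supp\ell,\partial D)\le\eta\}\le\theta .
\]
Since the soup has intensity one, the probability that some loop of any of these three types exists is bounded by its mass, hence by \(3\theta\) for suitable \(t_0,T,\eta\). It therefore suffices to treat loops with duration in \([t_0,T]\) whose trace lies in \(U_0:=\{x\in D:\dist(x,\partial D)>\eta\}\), up to the \(O(\delta)\) interpolation error. Fix \(U_0\Subset U_1\Subset D\) with room \(3L\) between them, for all small \(L\).

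Next we apply the localized version of \cref{lem:near-miss}. This version allows an arbitrary target, because only loops entirely inside \(U_0\) are counted. Take \(U=U_1\) in \eqref{eq:local-target-hittability} and a small fixed \(R_*\). Choose \(\eps\) so small that \(\eps^\alpha+\eps R_*^2/t_0\le\theta/C\), and set \(L=\sqrt\eps R_*\). The hypothesis is stated at distance \(s'=\eps^2R_*\). Apply the lemma with \(2s=s'\), that is \(s=\eps^2R_*/2\), and \(q=\eps^\alpha\). Then \(E^\delta\) is exactly the complement of \(\{\cH_\delta(A^\delta;\eps^2R_*,\sqrt\eps R_*;U_1)>\eps^\alpha\}\). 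The target \(A^\delta\) is independent of the soup, so we may condition on it. On \(E^\delta\), the expected number of near-miss loops in the window is at most \(C(q+L^2/t_0)+o_\delta(1)\le\theta+o_\delta(1)\). Off \(E^\delta\), we use the hittability hypothesis, which gives \(\limsup_\delta\mathbb P((E^\delta)^c)\le\theta\) once \(\eps\) is small.

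The event in \eqref{eq:common-close-miss} is monotone in \(s\). So for all \(s\le\eps^2R_*/2\) the limsup probability is at most \(5\theta\) or so, and letting \(\theta\downarrow0\) gives the claim. For the last sentence of the corollary, \cref{prop:multipath-hittability} supplies \eqref{eq:local-target-hittability} with \(\alpha=\eta_H\) for every \(U\Subset D\) and every \(R_*<\dist(U,\partial D)/10\).

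The delicate point is the order of quantifiers. The cutoffs \(t_0,T,\eta\) depend only on \(a\) and \(\theta\). The sets \(U_0,U_1\) depend on \(\eta\). Only then are \(\eps\) and \(L\) chosen, and these must satisfy the room condition \(U_0^{+3L}\Subset U_1\) together with \(L^2/t_0\) small. This works because \(L=\sqrt\eps R_*\to0\) with \(t_0\) fixed. We also need \(\delta=o(s)\), which holds because \(s\) is fixed before \(\delta\downarrow0\). I expect the main technical care to lie in the boundary cutoff: we must check that, for the arbitrary-target localized version, the traces of the discarded loops near \(\partial D\) are controlled uniformly in \(\delta\) by the estimate of \cref{sec:domain-extension}.
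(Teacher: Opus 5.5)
Your proposal is correct and follows essentially the same route as the paper. You use the same three cutoffs: short and long durations via the Gaussian and spectral bounds, and the boundary collar via the Lipschitz version of \cite[Proposition~5.1]{SS18}. You then apply the localized form of \cref{lem:near-miss} with $s=\tfrac12\eps^2R_*$, $L=\sqrt\eps R_*$, $q=\eps^\alpha$ and $U=U_1$ in the hypothesis, and conclude with the Poisson first-moment bound. Your explicit $\theta$-bookkeeping and the monotonicity in $s$ simply spell out the order of limits (first $\delta$, then $\eps$, then $\eta$) that the paper states tersely.
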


\begin{proof}
Fix the diameter cutoff \(a\). By \cite[Proposition~5.1 and its proof]{SS18},
the mass \(q_\delta(\eta)\) of loops contained in \(D^\delta\), of diameter
at least \(a\), and within distance \(\eta\) of \(\partial D\) satisfies
\begin{equation}\label{eq:blc-radial-collar-mass}
 \lim_{\eta\downarrow0}\limsup_{\delta\downarrow0}q_\delta(\eta)=0.
\end{equation}
For the present Lipschitz domains, use the boundary estimate of \cref{sec:domain-extension}. This lets us restrict all approach points to
a fixed compact subset of \(D\), even when the target reaches the boundary.

On a fixed duration window \([t_0,T]\), apply the localized version of
\cref{lem:near-miss} with
\(s=\tfrac12\eps^2R_*\), \(L=\sqrt\eps R_*\) and
\(q=\eps^\alpha\). Outside the exceptional event in
\eqref{eq:local-target-hittability}, the near-miss mass is at most
\(C(\eps^\alpha+\eps R_*^2/t_0)+o_\delta(1)\).
The Poisson formula, followed by \(\delta\downarrow0\),
\(\eps\downarrow0\) and \(\eta\downarrow0\), proves the claim in that window.

Finally, the Gaussian and spectral bounds \eqref{eq:lclt}--\eqref{eq:discrete-spectral-bound}
show that the masses of loops of diameter at least $a$ and duration below \(t_0\),
and of loops of duration above \(T\), are bounded respectively by
\[
 C\int_0^{t_0}t^{-5/2}e^{-ca^2/t}\dd t,
 \qquad C\int_T^\infty t^{-1}e^{-ct}\dd t.
\]
These bounds hold for both soups and vanish as
\(t_0\downarrow0\), \(T\uparrow\infty\), completing the proof.
\end{proof}

For \(\lambda\in[0,1]\), write
\(\mathcal L_{D^\delta}^{\lambda,\delta}\) and \(\mathcal L_D^\lambda\)
for the discrete and Brownian soups. Each soup is independent of the
targets; the two soups may be coupled to each other. We use
\(\mathcal D_\lambda\) from \eqref{eq:decoration} in the domain \(D\).
Its discrete counterpart \(\mathcal D_\lambda^\delta\) attaches loops
whose vertex supports meet the target support and then interpolates the
resulting set. These conventions apply equally to linearly interpolated
traces and to cell traces, which differ by \(O(\delta)\).
As we are going to see in the next proposition, the absence of such near misses makes the selection of attached loops stable under Hausdorff convergence.
\begin{proposition}
\label{prop:common-decoration-stability}
Let \(D\) be a bounded connected domain with Lipschitz boundary. Suppose
that random vertex sets
\(A^\delta\subset D^\delta\) converge almost surely in Hausdorff distance
to a random compact \(A\subset\overline D\). 
Assume the near-miss estimate \eqref{eq:common-close-miss}.
For every fixed \(\lambda\in[0,1]\), attach the loops of an independent
intensity-\(\lambda\) soup that directly hit the target, and take the
closure. Then
\[
 \big(A^\delta,\mathcal D_\lambda^\delta(A^\delta)\big)
 \Rightarrow\big(A,\mathcal D_\lambda(A)\big).
\]
The same conclusion holds jointly with additional coordinates that
converge in the same realization, provided the loop soup is independent
of the full jointly convergent tuple. Discrete hits in this statement use vertex
supports; interpolation is performed afterwards.
\end{proposition}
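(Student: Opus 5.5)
We couple the two soups via \cref{lem:macro-soup-coupling}, independently of the whole convergent tuple, and approximate both decorations by their macroscopic parts. Fix \(a>0\) and a window \(0<t_0<T\). Write \(\mathcal D_\lambda^{\delta,a}(A^\delta)\) and \(\mathcal D_\lambda^a(A)\) for the decorations using only attached loops of diameter \(\ge a\) and duration in \([t_0,T]\). Loops of diameter \(<a\) that meet the target lie in \(A^{+a}\), so omitting them moves the decoration by at most \(a\) in Hausdorff distance; this holds for both soups. Loops of diameter \(\ge a\) with duration outside \([t_0,T]\) have vanishing mass as \(t_0\downarrow0\), \(T\uparrow\infty\), uniformly in \(\delta\), by the integrals at the end of the proof of \cref{cor:no-near-miss}. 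Since the Lévy–Prokhorov distance is controlled by such truncations, it suffices to show that, for fixed \(a,t_0,T\),
\[
\Hd\big(\mathcal D_\lambda^{\delta,a}(A^\delta),\mathcal D_\lambda^{a}(A)\big)\to0
\]
in probability along the coupling, jointly with \(A^\delta\to A\) and the additional coordinates.

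On the event of \cref{lem:macro-soup-coupling}, the loops in the window are finitely many and are matched in pairs \((\ell^\delta,\ell)\) with \(\Hd(\ell^\delta,\ell)\le\gamma_\delta\). The boundary mismatch is handled by part (c). It remains to show that, with high probability, \(\supp\ell^\delta\cap A^\delta\ne\varnothing\) if and only if \(\ell\cap A\ne\varnothing\) for every matched pair. Once this selection equivalence holds, the attached families correspond, and the Hausdorff distance between the decorations is at most \(\max\{\Hd(A^\delta,A),\gamma_\delta\}+O(\delta)\to0\).

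We prove the two directions of the selection equivalence separately.

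\emph{Discrete hit implies continuum hit.} If \(\ell^\delta\) meets \(A^\delta\) but \(\ell\cap A=\varnothing\), then \(\dist(\ell,A)\) is positive, and passing to the limit forces \(\dist(\ell,A)=0\). To rule this out, condition on \(A\), which is independent of the Brownian soup, and use the following fact: for a compact \(A\), Brownian loops of macroscopic duration a.s. do not satisfy \(0<\dist(\ell,A)<\epsilon\) for all small \(\epsilon\). More directly, we have \(\mu\{\ell:\dist(\ell,A)\le\epsilon,\ \ell\cap A=\varnothing\}\to0\) as \(\epsilon\downarrow0\) by monotone convergence, since the intersection over \(\epsilon\) of these sets is empty. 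Hence, with high probability, every window loop either meets \(A\) or stays at distance \(\ge\epsilon\) from \(A\). When \(\gamma_\delta+\Hd(A^\delta,A)<\epsilon/2\), the second alternative prevents a discrete hit.

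\emph{Continuum hit implies discrete hit.} If \(\ell\cap A\ne\varnothing\), then \(\dist(\tr\ell^\delta,A^\delta)\le s\) once \(\gamma_\delta+\Hd(A^\delta,A)\le s\). The near-miss estimate \eqref{eq:common-close-miss}, whose lower-intensity version follows by thinning since the intensity is at most \(1\), then shows that \(\supp\ell^\delta\cap A^\delta\ne\varnothing\) except on an event of probability \(o(1)\) as \(\delta\downarrow0\) and then \(s\downarrow0\).

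I expect this second direction, transferring hits from the continuum to the lattice, to be the main obstacle. This is exactly where hittability enters, through \eqref{eq:common-close-miss}.

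To conclude, we order the limits as \(\delta\downarrow0\), then \(s,\epsilon\downarrow0\), then \(t_0\downarrow0\), \(T\uparrow\infty\), and finally \(a\downarrow0\). For the joint statement, the decorations are functions of the tuple and of an independent soup, so the argument runs conditionally on the tuple.
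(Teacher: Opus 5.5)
Your proposal is correct and follows essentially the paper's argument. You couple the soups independently of the target tuple via \cref{lem:macro-soup-coupling} and restrict to a finite window of macroscopic loops. You then prove the selection sandwich: a continuum hit forces a discrete hit by \eqref{eq:common-close-miss} plus thinning, while a discrete hit only yields a hit of \(A^{+2\rho}\). The excess over hits of \(A\) has vanishing mass by continuity from above of the finite window measure, conditional on \(A\). Finally you remove the duration cutoffs and let \(a\downarrow0\).

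The only detail the paper makes explicit that you leave implicit is how to handle matched pairs that straddle the window boundaries. The paper takes \(a\) to be a continuity value and uses that the duration measure has no atoms, so that such pairs are negligible.
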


\begin{proof}
We use the selection argument of \cite[Section~5.2]{SS18} with
\eqref{eq:common-close-miss} in place of its LERW estimate.
Couple the soups independently of the jointly convergent target tuple,
using \cref{lem:macro-soup-coupling}, and first keep only loops in a
fixed duration window with diameter at least a continuity value \(a>0\).
Common thinning gives the coupling at intensity \(\lambda\).

If both the target and loop coupling errors are below \(\rho\), a
Brownian hit of \(A\) produces a discrete loop within \(2\rho\) of
\(A^\delta\), which hits by \eqref{eq:common-close-miss} except with
vanishing probability. Conversely, a discrete hit produces a Brownian
hit of \(A^{+2\rho}\). Hence, within the finite window,
\begin{equation}\label{eq:selection-sandwich}
 \{\ell:\ell\cap A\ne\varnothing\}
 \subset\operatorname{pair}
       \{\ell_\delta:\supp\ell_\delta\cap A^\delta\ne\varnothing\}
 \subset\{\ell:\ell\cap A^{+2\rho}\ne\varnothing\},
\end{equation}
with error tending to zero as first \(\delta\downarrow0\) and then
\(\rho\downarrow0\). Conditional on \(A\), the mass of loops in the last
family but not the first decreases to zero: disjoint compact sets have
positive distance and the window measure is finite. Thus the selected
finite families agree with probability tending to one.

Remove the duration cutoffs using the bounds in the proof of
\cref{cor:no-near-miss}. Every remaining attached loop of diameter
less than \(a\) lies within distance \(a\) of its target, so deleting
these loops changes the decorated set by at most \(a+O(\delta)\) in
Hausdorff distance. Let \(a\downarrow0\). The same coupling keeps the
additional coordinates, proving joint convergence as well.
\end{proof}

Consequently, a finite recovery property such as \eqref{eq:finite-transform} passes to a subsequential continuum limit in the form \eqref{eq:common-recovery-transfer}. The argument is similar to~\cite[Proposition~4.2]{Cai26}, and we omit it here.
\begin{corollary}
\label{cor:recovery-transfer}
Assume the hypotheses of \cref{prop:common-decoration-stability}, and
fix \(\lambda\in[0,1]\). Let random vertex sets \(R^\delta\) have
geometric traces converging in law to a compact set \(R\). Suppose that,
for every \(F\subset D^\delta\),
\[
 \mathbb E\!\left[
 \one_{\{A^\delta\cap F=\varnothing\}}
 e^{-\lambda m_{D^\delta}^{\delta}(A^\delta,F)}\right]
 =\mathbb P(R^\delta\cap F=\varnothing).
\]
Then \(\mathcal D_\lambda(A)\stackrel d=R\), and, for every compact
\(E\subset\overline D\),
\begin{equation}\label{eq:common-recovery-transfer}
 \mathbb E\!\left[
 \one_{\{A\cap E=\varnothing\}}e^{-\lambda m_D(A,E)}\right]
 =\mathbb P(R\cap E=\varnothing).
\end{equation}
\end{corollary}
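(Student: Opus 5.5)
We work on a probability space where the pairs converge almost surely. Apply Skorokhod representation to the joint law of \((A^\delta,R^\delta)\) along a sequence: tightness of \(A^\delta\) comes from almost-sure convergence, and that of \(R^\delta\) from the assumption. By \cref{prop:common-decoration-stability}, \(\mathcal D_\lambda^\delta(A^\delta)\Rightarrow\mathcal D_\lambda(A)\) in Hausdorff distance, with the soup independent of the target. So it suffices to show that the discrete decoration has the same law as \(R^\delta\) up to an \(O(\delta)\) Hausdorff error. Then the limits agree and \(\mathcal D_\lambda(A)\stackrel d=R\).

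The discrete identity is a Poisson computation. Conditional on \(A^\delta\), the set of loops of an intensity-\(\lambda\) soup that hit \(A^\delta\) and also meet \(F\) is Poisson with mean \(\lambda m^\delta_{D^\delta}(A^\delta,F)\). Hence
\[
\mathbb P\bigl(\mathcal D^\delta_\lambda(A^\delta)\cap F=\varnothing\bigr)
=\mathbb E\bigl[\one_{\{A^\delta\cap F=\varnothing\}}e^{-\lambda m^\delta_{D^\delta}(A^\delta,F)}\bigr]
=\mathbb P(R^\delta\cap F=\varnothing)
\]
for every \(F\subset D^\delta\). On the finite set \(D^\delta\), avoidance probabilities of all subsets determine the law of a random subset by inclusion–exclusion. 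Therefore the vertex supports agree in law, and so do the interpolated or cell traces, up to the \(O(\delta)\) difference between conventions. This yields \(\mathcal D_\lambda(A)\stackrel d=R\). Because the limit \(A\) is unique, the identity holds along the full sequence, and there is no subsequence issue.

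For \eqref{eq:common-recovery-transfer}, fix a compact \(E\subset\overline D\). Apply \cref{lem:closed-decoration-avoidance} to the independent Brownian soup:
\[
\mathbb P(\mathcal D_\lambda(A)\cap E=\varnothing)
=\mathbb E\bigl[\one_{\{A\cap E=\varnothing\}}e^{-\lambda m_D(A,E)}\bigr].
\]
The left-hand side equals \(\mathbb P(R\cap E=\varnothing)\) by the equality in law just proved. The fact is stated for \(K,A\subset\overline D\), so compacts touching \(\partial D\) are covered; loops live in \(D\), so boundary contact only enters through \(A\cap E\).

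The main obstacle is the first step. The decoration stability needs the pair \((A^\delta,\text{soup})\) to be independent and coupled with \(R^\delta\) only through its law, not jointly. This is fine, because we only compare marginal laws: \(\mathcal D^\delta_\lambda(A^\delta)\stackrel d=R^\delta\) holds at each \(\delta\), and both sides converge in law separately. No joint convergence of \(A^\delta\) and \(R^\delta\) is needed, and the Skorokhod step can be applied to \(A^\delta\) alone.
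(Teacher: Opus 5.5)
Your argument is correct and is the intended one: the Poisson formula and inclusion--exclusion on the finite set \(D^\delta\) give \(\mathcal D_\lambda^\delta(A^\delta)\stackrel d=R^\delta\) as vertex sets (hence up to \(O(\delta)\) for geometric traces), \cref{prop:common-decoration-stability} then yields \(\mathcal D_\lambda(A)\stackrel d=R\), and Fact~\ref{lem:closed-decoration-avoidance} converts this into \eqref{eq:common-recovery-transfer}. The Skorokhod step is unnecessary, since almost-sure convergence of \(A^\delta\) is already a hypothesis, and the case \(\lambda=0\), which the Fact as stated does not cover, is trivial.
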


\section{Characterization of 3D LEBM}
\label{sec:continuum-probe}

Following~\cite[Section 4.1]{Cai26}, we construct the intensity-one continuum probes and apply the common
identification results of \cref{sec:prelim} to prove \cref{thm:main}.

Let \(D\subset\R^3\) be a bounded Lipschitz domain, in particular the domain \(B\) fixed in \cref{sec:intro}.  Fix ordered pairs of
distinct interior points \((x_i,y_i)_{i\le r}\), with repetitions allowed,
and lattice approximations \((x_i^\delta,y_i^\delta)\).  Let
\(C_r^\delta\) be the linearly interpolated range of the entangled multipath LERW and
\begin{equation*}
 C^{0,\delta}=\bigcup_{i=1}^r\tr(\omega_i^\delta)
\end{equation*}
the union of the normalized random-walk bridges.  By construction,
\(C_r^\delta\subset C^{0,\delta}\).

The compactness of \(\K(J_D)\), together with tightness of the path
coordinates, allows us to pass to a subsequence on which

\begin{equation}\label{eq:joint-core-limit}
(C_r^\delta,C^{0,\delta},\omega_1^\delta,\ldots,\omega_r^\delta)
 \Rightarrow(C,C^0,\Gamma_D^{x_1,y_1},\ldots,\Gamma_D^{x_r,y_r}),
 \qquad C\subset C^0\quad\text{a.s.}
\end{equation}
The containment follows from its discrete counterpart and closedness of
the containment relation in the product Hausdorff topology.  Any such
subsequential \(C\) will be called a \emph{continuum interior probe}.
As explained in~\cite{Cai26}, the law of \(C\) need not be identified separately.
The following proposition shows that every subsequential interior probe has the required Green-function transform.
\begin{proposition}
\label{prop:continuum-probe}
Let \(C\) be any subsequential limit in \eqref{eq:joint-core-limit}.  For
every compact \(A\subset\overline D\),
\begin{equation}\label{eq:continuum-probe-identity}
 \mathbb E_C\!\left[
   \one_{\{C\cap A=\varnothing\}}e^{-m_D(C,A)}
 \right]
 =\prod_{i=1}^r\frac{G_{D\setminus A}(x_i,y_i)}{G_D(x_i,y_i)},
\end{equation}
where the numerator is zero if an endpoint lies in \(A\) or the endpoints
are in different components of \(D\setminus A\).
\end{proposition}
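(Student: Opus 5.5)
The plan is to pass the finite identity \eqref{eq:finite-transform} to the limit through \cref{cor:recovery-transfer}, with \(A^\delta=C_r^\delta\) (vertex support), \(R^\delta=C^{0,\delta}\), and \(\lambda=1\). Then we identify the right-hand side using \cref{lem:green-path-limit}.

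\textbf{Step 1: hypotheses of \cref{prop:common-decoration-stability}.} By Skorokhod representation on the subsequence \eqref{eq:joint-core-limit}, we may assume almost sure Hausdorff convergence \(C_r^\delta\to C\), jointly with the bridges. The near-miss estimate \eqref{eq:common-close-miss} for \(A^\delta=C_r^\delta\) follows from \cref{cor:no-near-miss}, whose hypothesis \eqref{eq:local-target-hittability} is exactly \cref{prop:multipath-hittability}.

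\textbf{Step 2: the finite identity.} For \(F\subset D^\delta\), apply \cref{prop:finite-probe} with \(V=D^\delta\) and \(R=D^\delta\setminus F\). The event \(\{C_r\subset R\}\) is \(\{C_r\cap F=\varnothing\}\), and \(m_V(C_r,V\setminus R)=m^\delta_{D^\delta}(C_r,F)\). Equivalently, by \eqref{eq:finite-decoration-law}, the left side equals \(\mathbb P(C^{0,\delta}\cap F=\varnothing)\). \cref{cor:recovery-transfer} applies because \(C^{0,\delta}\Rightarrow C^0\) by \cref{lem:green-path-limit}. It gives, for every compact \(A\subset\overline D\),
\[
 \mathbb E\bigl[\one_{\{C\cap A=\varnothing\}}e^{-m_D(C,A)}\bigr]
 =\mathbb P(C^0\cap A=\varnothing),
\]
where \(C^0=\bigcup_i\tr(\Gamma_D^{x_i,y_i})\) are independent normalized Brownian bridges.

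\textbf{Step 3: evaluate the bridge avoidance probability.} Independence turns \(\mathbb P(C^0\cap A=\varnothing)\) into a product of one-bridge probabilities. It then suffices to show
\[
 \mathbb P\bigl(\tr\Gamma_D^{x,y}\cap A=\varnothing\bigr)=\frac{G_{D\setminus A}(x,y)}{G_D(x,y)}.
\]
By \eqref{eq:green-bridge}, the left side equals \(G_D(x,y)^{-1}\int_0^\infty p_D(t,x,y)\,\mathbf P^{D,t}_{x,y}(\text{avoid }A)\dd t\). The integrand satisfies \(p_D(t,x,y)\,\mathbf P^{D,t}_{x,y}(\text{avoid }A)=p_{D\setminus A}(t,x,y)\), the killed heat kernel of the open set \(D\setminus A\). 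This follows from the standard disintegration of killed Brownian motion: killing at \(\partial D\) and then at \(A\) is killing on the boundary of \(D\setminus A\). One must check that avoiding the compact set \(A\) is equivalent to staying in \(D\setminus A\) up to time \(t\) for the bridge, which holds since the path is continuous. Integrating in \(t\) gives \(G_{D\setminus A}(x,y)\). The degenerate cases are consistent with the convention: if \(x\in A\) or \(y\in A\), or the endpoints lie in different components of \(D\setminus A\), the avoidance probability is zero.

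\textbf{Main obstacle.} Conceptually, the main difficulty is Step 1, since it rests on the hittability input, and that is already supplied by \cref{prop:multipath-hittability}. What remains delicate is the bookkeeping in Step 2. \cref{cor:recovery-transfer} is stated for a compact \(E\subset\overline D\), possibly touching \(\partial D\), while the probe \(C\Subset D\) is compact inside \(D\) almost surely, because \(C\subset C^0\Subset D\). A lower-tech alternative is a direct limit: approximate \(A\) from outside by \(A^{+\rho}\) and from inside, and use monotonicity in \(A\) of both sides. On the Green-function side this uses continuity of \(G_{D\setminus A^{+\rho}}(x,y)\uparrow G_{D\setminus A}(x,y)\) for \(x,y\notin A\), which follows from continuity of the killed kernels.
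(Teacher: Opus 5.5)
Your proposal is correct and follows the paper's (omitted) argument. The paper applies \cref{cor:recovery-transfer} to the finite identity \eqref{eq:finite-transform}, with the near-miss hypothesis supplied by \cref{prop:multipath-hittability} and \cref{cor:no-near-miss}, and with the limit of the bridge union taken from \cref{lem:green-path-limit}. Your Step~3 makes explicit a step the paper leaves implicit: it identifies $\mathbb P(C^0\cap A=\varnothing)$ with the product of Green ratios via $p_D(t,x,y)\,\mathbf P^{D,t}_{x,y}(\text{avoid } A)=p_{D\setminus A}(t,x,y)$, and this identification is correct.
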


We omit the proof because \cref{cor:recovery-transfer}, applied to
Cai's finite identity \eqref{eq:finite-transform} using
\cref{lem:green-path-limit,prop:multipath-hittability,cor:no-near-miss},
gives the three-dimensional counterpart of
\cite[Proposition~4.2]{Cai26}.

\begin{proof}[Proof of \cref{thm:main}]
For \(\lambda=1\), \cref{prop:continuum-probe} supplies the probes
required by \cref{lem:mixed-moments}. Every law on \(\Gamma\)
with the recovery property satisfies
\eqref{eq:shared-recovery-transform}, by
Fact \ref{lem:closed-decoration-avoidance}. Since
\(\Gamma\subset\mathfrak C_B\), the shared identification lemma
proves uniqueness.

Compactness in the ambient space \(\K(J_B)\) supplies subsequential
limits of the rescaled LERW traces. Each is contained in \(\overline B\),
is supported on \(\Gamma\), and satisfies \eqref{eq:lerw-recovery}
by the domain extension of \cite[Theorems~1.1--1.2]{SS18} in
\cref{sec:domain-extension}. The uniqueness just proved identifies
all such limits and hence gives convergence of the full sequence.
\end{proof}

\section{Construction and characterization of the 3D Brownian loop-catcher}
\label{sec:blc-discrete}\label{sec:blc-uniqueness}

We prove \cref{thm:blc-main} using the finite recovery identities
\eqref{eq:blc-discrete-recovery-transform}--\eqref{eq:blc-discrete-probe-transform}. The additional input is the
inclusion of loop-erased initial segments of the recovered walks in the
catcher, which transfers the hittability estimate uniformly in
\(\lambda\). Continuum recovery and uniqueness then follow from the
common results in \cref{sec:prelim,sec:decoration}, as for LERW.

\subsection{Loop-erased initial segments in recovered walks}
Recall the catcher and recovered walks defined in
\cref{sec:discrete-model}.
As we are going to see in the lemma below, the recovery algorithm places a loop-erased initial segment through every catcher vertex inside the catcher.

\begin{lemma}
\label{lem:blc-online-witness}
We suppress the mesh superscript in this pathwise statement.
In the coupling above, let \(\widehat\omega_i\) be the recovered paths
and let \(S_{\lambda,\times}\) be the range of the time-parametrized
multipath loop-catcher.  For every \(w\in S_{\lambda,\times}\), there
are an index \(i\) and a time \(\sigma\) such that
\begin{equation}\label{eq:blc-online-witness}
 \widehat\omega_i(\sigma)=w,
 \qquad
 \supp\LE(\widehat\omega_i[0,\sigma])
       \subset S_{\lambda,\times}.
\end{equation}
Moreover, the loop erasure of each complete recovered path is contained
in \(S_{\lambda,\times}\).  The same assertion holds for the one-path
excursion catcher.
\end{lemma}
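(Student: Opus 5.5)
We argue pathwise from the structure of Cai's recovery algorithm \cite[Algorithm~2.7]{Cai26}. Recall the mechanism. The time-parametrized catcher is a directed path (one per probe index in the multipath setup). Its range is \(S_{\lambda,\times}\). The recovered path \(\widehat\omega_i\) is obtained by inserting, at suitable visits of the catcher path, loops of the independent intensity-\(\lambda\) soup together with the erased loops that the catcher retained.

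The key structural fact we plan to use comes from the recovery bijection of \cite[Proposition~2.8]{Cai26}. Every loop inserted into \(\widehat\omega_i\) is rooted at a vertex already visited by \(\widehat\omega_i\). Moreover, the chronological loop erasure of \(\widehat\omega_i\), observed at each time when \(\widehat\omega_i\) is following a catcher step, equals the portion of the catcher path traversed so far, after loop erasure. In other words, the catcher path plays for \(\widehat\omega_i\) the role that the terminal path \(\eta\) plays for the LERW record \(P(\omega)\). Inserted loops are excursions that return to their root, so they are erased by \(\LE\) of any initial segment extending past their end.

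Step one: the complete path. Chronological loop erasure of \(\widehat\omega_i\) removes every inserted loop. What remains is contained in the loop erasure of the catcher path. That path has support in \(S_{\lambda,\times}\), and loop erasure only removes vertices. Hence \(\supp\LE(\widehat\omega_i)\subset S_{\lambda,\times}\), which gives the second assertion.

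Step two: the witness for a vertex \(w\in S_{\lambda,\times}\). Pick the index \(i\) and the first time \(\sigma\) at which \(\widehat\omega_i\) visits \(w\) while executing a step of the catcher path, not inside an inserted loop. Such a time exists because every catcher vertex is traversed by some catcher path and the recovered path follows it. At such a time, every inserted loop begun before \(\sigma\) has closed. Therefore \(\LE(\widehat\omega_i[0,\sigma])=\LE(\text{catcher path up to } w)\), whose support lies in \(S_{\lambda,\times}\). This proves \eqref{eq:blc-online-witness}.

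The one-path excursion catcher is the case \(r=1\) with radial data \eqref{eq:blc-radial-data}, and the same argument applies verbatim.

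The main obstacle is the precise claim that inserted pieces are genuine closed loops that do not interact with earlier erasures. The danger is that an inserted loop could revisit a vertex of the current loop-erased past, so that erasure would cut the catcher path rather than the loop. If that direct claim is not available in this form, we plan a fallback. We would prove by induction over the steps of Algorithm~2.7 that, at each catcher-step time \(\sigma\), \(\supp\LE(\widehat\omega_i[0,\sigma])\subset\) (catcher range traversed so far). The induction step would use the fact that any vertex erased or added by an inserted loop that meets earlier catcher vertices is itself in the catcher range, by the selection rule defining which loops are kept. This is the analogue of \cref{lem:prefix-witness}, and we expect to mirror the proof of \cite[Lemma~2.4]{Cai26}.
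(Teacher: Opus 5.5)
Your reduction is correct as far as it goes. If every part of $\widehat\omega_i$ that is not a step of the catcher traversal is a closed excursion from its base $u$ meeting the current chronological stack only at $u$, then both assertions follow as you describe. But this non-interaction statement is exactly the content of the lemma, and you assume it rather than prove it. It is not supplied by \cite[Proposition~2.8]{Cai26}, which identifies the joint law of the recovered paths and says nothing pathwise about their stacks.

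Your picture of a single catcher path with loops inserted also misses the layered structure of \cite[Algorithms~2.5 and~2.7]{Cai26}. The layer-$n$ paths $\gamma^{(n)}_1,\dots,\gamma^{(n)}_{r_n}$ live in shrinking domains $R_n$, with $R_{n+1}=R_n\setminus S_n$. The blocks attached at an occurrence of a layer path are recursively recovered child paths in $R_{n+1}$, together with assigned soup loops. The avoidance actually available, \cite[Lemma~2.3]{Cai26}, is relative to the layer: a block based at $u$ avoids the earlier vertices of its own layer path and the earlier paths $C_{n,i-1}=\bigcup_{j<i}\supp\gamma^{(n)}_j$ of its layer. When a deep layer path starts, the stack already contains vertices of ancestor layers and of earlier paths. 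So this layer-relative avoidance shows that a block misses the true stack (except at its top) only once you know that the incoming stack $T$ satisfies $T\cap R_n\subset C_{n,i-1}$.

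That incoming condition is the missing idea, and it needs its own induction over layers and over the recorded order of children:
\begin{enumerate}[label=(\roman*)]
\item it holds for the empty top-level stack;
\item processing earlier children adds inside $R_{n+1}$ only vertices of earlier child paths;
\item the connecting segments between recursive calls lie in $S_n$.
\end{enumerate}
With it, the stack at an occurrence of $\gamma^{(n)}_i$, read after its attached blocks have returned, lies in $(R_0\setminus R_n)\cup C_{n,i-1}\cup\supp\gamma^{(n)}_i\subset S_{\lambda,\times}$. At the terminal top-level occurrence it lies in the top-level path, which gives the statement about complete paths.

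Your fallback does not provide this. Its key rule is that vertices added by an inserted loop meeting earlier catcher vertices lie in the catcher, by the selection rule. This is false for the soup loops attached in the recovery. These generally leave the catcher, which is how decoration restores the full walk range. So a soup loop that hit the stack below its base would leave non-catcher vertices on the stack after returning. What must be shown is that this never happens, which is an avoidance statement rather than a membership statement. Mirroring \cite[Lemma~2.4]{Cai26} does not directly apply either: that lemma concerns the original bridges and the reduction $\Phi_A$, not paths recovered from a catcher and an independent soup.
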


\begin{proof}
Use the layers of \cite[Algorithms~2.5 and~2.7]{Cai26}: let
\(R_0\) be the initial vertex domain, \(R_n\) the domain remaining
at layer \(n\), and \(r_n\) the number of ordered entangled paths
\(\gamma_1^{(n)},\ldots,\gamma_{r_n}^{(n)}\) in that layer.
Write \(C_{n,i}=\bigcup_{j\le i}\supp\gamma_j^{(n)}\), with
\(C_{n,0}=\varnothing\), and
\(S_n=C_{n,r_n}\), so that \(R_{n+1}=R_n\setminus S_n\) and
\(S_{\lambda,\times}=\bigcup_n S_n\).  At an occurrence of a layer path,
read the recovered path after all blocks attached there have returned.
The chronological stack then agrees with that obtained from the prefix of the layer path and its actual incoming stack (Here the stack is the chronological loop erasure of the
recovered path read so far, and for a layer path, its
incoming stack is the stack present when that layer
path starts to be processed). Indeed, by \cite[Lemma~2.3]{Cai26}, a
block based at \(u\) avoids the earlier vertices of that layer path and earlier paths
of its layer; recovery preserves this avoidance because its child
substitutions lie in \(R_{n+1}\), and assigned loops obey the same
avoidance condition.  Such a block therefore misses the incoming stack
except at its top and restores it on returning to \(u\).  Let \(T\) be the vertex support of the incoming chronological stack. The needed incoming condition, \(T\cap R_n\subset C_{n,i-1}\), holds at every
recursive entry: it holds for the empty top-level stack, and processing
the children in their recorded order adds inside \(R_{n+1}\) only vertices
of earlier child paths; the connecting path segments between successive
recursive calls lie in \(S_n\).
Induction over layers and this order proves the condition and the layer-path
stack assertion.  Every catcher vertex occurs on a layer path, where the
global stack is contained in
\((R_0\setminus R_n)\cup C_{n,i-1}\cup\supp\gamma_i^{(n)}
\subset S_{\lambda,\times}\), proving the required inclusion.  At the terminal
top-level occurrence the stack is contained in its layer path; this proves
the assertion about the loop erasure of the complete recovered path as well.
\end{proof}

\subsection{A uniform hittability estimate}
\label{sec:blc-hittability}

The benefit of \cref{lem:blc-online-witness} is that all estimates can be
made on the recovered walks.  Their law does not depend on \(\lambda\).

\begin{proposition}
\label{prop:blc-uniform-hittability}
Fix \(U\Subset B\) and
\(0<R_*<\dist(U,\partial B)/20\).  There is \(\eta>0\) such that
\begin{equation}\label{eq:blc-radial-hittability}
 \lim_{\eps\downarrow0}\limsup_{\delta\downarrow0}
 \sup_{0\le\lambda\le1}
 \mathbb P\!\left(
  \cH_\delta(S_\lambda^\delta;\eps^2R_*,\sqrt\eps R_*;U)
       >\eps^\eta\right)=0.
\end{equation}
For each fixed finite endpoint list, the same assertion holds with
\(S_\lambda^\delta\) replaced by \(S_{\lambda,\times}^\delta\).
\end{proposition}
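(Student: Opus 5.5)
The plan is to rerun the proof of \cref{prop:multipath-hittability} with the entangled range \(C_r^\delta\) replaced by the catcher \(S_\lambda^\delta\). The only structural input used there was \cref{lem:prefix-witness}. In its place we use \cref{lem:blc-online-witness}: every catcher vertex \(w\) is \(\widehat\omega_i(\sigma)\) for a recovered path \(\widehat\omega_i\), with \(\supp\LE(\widehat\omega_i[0,\sigma])\subset S_\lambda^\delta\). In addition, the loop erasure of each complete recovered path lies in \(S_\lambda^\delta\).

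By \cite[Proposition~2.8]{Cai26}, the recovered paths have a law independent of \(\lambda\). In the radial case it is the interior path \((X_0^\delta,\dots,X_{\tau_\delta-1}^\delta)\) of simple random walk from \(0\) killed on exiting \(D^\delta\). In the multipath case they are independent normalized random-walk bridges. We therefore define the bad event \(\mathsf{Bad}\) purely in terms of the recovered paths, so its probability does not depend on \(\lambda\). This is what gives the \(\sup_{0\le\lambda\le1}\) for free.

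\emph{Multipath case.} Take the same scales \(s,h,L\), the cube cover, and the stopping times \(\tau_Q\). Define \(\mathsf{Bad}\) as in Step~4 of that proof, using the \(\widehat\omega_i\) and \(\eta_i=\LE(\widehat\omega_i)\). Claims~\ref{clm:off-center-prefix}--\ref{clm:endpoint-bound} and the bound \eqref{eq:bad-union} then apply verbatim, since they only concern normalized bridges. The geometric step, Claim~\ref{clm:good-event-geometry}, goes through with \cref{lem:blc-online-witness} replacing \cref{lem:prefix-witness}. For a far cube the witness prefix has \(\Lambda_Q\subset S_{\lambda,\times}^\delta\). For an endpoint cube we use \(\supp\eta_i\subset S_{\lambda,\times}^\delta\), which is the "moreover'' clause.

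\emph{Radial case.} There is a single recovered path \(X\), started at \(0\) and killed at \(\partial B\). No Doob transform is needed here, so far cubes are handled directly by \cref{clm:off-center-prefix} applied to \(S\) started at \(0\). There is no \(h^{-1}\) Green factor, so the union bound gives \(s^{-3}(s/h)^M\). Cubes with \(|v_Q|\le2h\) are handled by \cref{clm:centered-prefix} at \(x=0\) with \(\rho=8h\), \(R=L/2\). Its hypothesis \(\supp\LE(S[0,T])\not\subset\B_\delta(0,R)\) holds at the first exit of the final loop erasure, by \cref{lem:last-exit-prefix}, since the loop erasure reaches the boundary layer of \(D^\delta\), which is far from \(0\). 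Here \(U\Subset B\) and \(R_*<\dist(U,\partial B)/20\) keep all balls \(B(v_Q,h/2)\) and \(B(z,L)\) inside \(B\). There is no terminal endpoint inside \(U\), because the exit connector lies near \(\partial B\), outside \(U^{+2R_*}\).

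The exponent bookkeeping with \(M=12\) is then unchanged, and the radial case is only easier. The main obstacle is checking that the witness prefixes from \cref{lem:blc-online-witness} enter the stopping-time argument exactly as in Step~4. Concretely, the bad stopping time \(\tau_Q\) must be evaluated along the recovered path \(\widehat\omega_i\), and the witness time \(\sigma\) must satisfy \(\sigma<N_i\) for far cubes. This holds since \(w\ne b_i\) when the cube is \(i\)-far. Since \(\tau_Q\) is a stopping time for the recovered path, whose law is the bridge law, the estimate \eqref{eq:far-cube-bound} applies unchanged.
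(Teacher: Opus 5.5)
Your proposal is correct and follows the paper's proof. You rerun the argument of \cref{prop:multipath-hittability} on the $\lambda$-independent recovered paths from \cite[Proposition~2.8]{Cai26}, use \cref{lem:blc-online-witness} (including its clause on complete loop erasures) in place of \cref{lem:prefix-witness} so that the hittability event is contained in $\mathsf{Bad}$ for the recovered tuple, and in the radial case drop the Doob factor and keep only the starting-point estimate. Your explicit radial bookkeeping, namely the bound $s^{-3}(s/h)^{12}$ and the centered claim at $0$, simply spells out what the paper summarizes as ``these smaller bounds.''
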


\begin{proof}
Apply the proof of \cref{prop:multipath-hittability} to the recovered
paths supplied by \cite[Proposition~2.8]{Cai26}. Then by
\cref{lem:blc-online-witness}, 
we find
\[
 \left\{\cH_\delta(S_{\lambda,\times}^\delta;
       \eps^2R_*,\sqrt\eps R_*;U)>\eps^\eta\right\}
 \subset\left\{\widehat{\boldsymbol\omega}^{\,\lambda}
                         \in\mathsf{Bad}_{\delta,\eps}\right\}.
\]
The recovered tuple has the same normalized random-walk bridge law for
every \(\lambda\), so \eqref{eq:bad-final} gives a bound tending to zero
uniformly in \(\lambda\).  In the radial case the recovered path is an
unconditioned killed walk: the same proof no longer needs a Doob-transform factor and uses only the estimate near the starting
point, and its terminal point lies
outside a fixed neighbourhood of \(U\) for small \(\delta\).
These smaller bounds prove the radial assertion.  The uniformity is in
marginal probabilities and requires no joint coupling of all intensities.
\end{proof}

\subsection{Subsequential limits}

\begin{proposition}
\label{prop:blc-subsequential-geometry}
For fixed \(\lambda\in[0,1]\), the family \((K_\lambda^\delta,R^\delta)\)
is tight.  Along every convergent subsequence it has a Skorokhod realization
such that
\begin{equation*}
 (K_\lambda^\delta,R^\delta)\longrightarrow(K,R)
 \quad\text{a.s.},
 \qquad
 K\subset R\stackrel d=W[0,\tau_B].
\end{equation*}
Here \(K\) is compact and connected, \(0\in K\),
\(K\cap\partial B\) is a singleton, and \(\operatorname{int}K=\varnothing\).

For a fixed finite endpoint list, every mesh sequence has a subsequence on

which \(C_{\lambda,\times}^\delta\Rightarrow C\), jointly with
mutually independent normalized Brownian bridges
\((\Gamma_B^{a_i,b_i})_{i\le r}\), and

\begin{equation}\label{eq:blc-probe-subset}
 C\subset\bigcup_{i=1}^r\tr(\Gamma_B^{a_i,b_i})\Subset B.
\end{equation}
The limit has at most \(r\) connected components, and every
component contains both endpoints of at least one labelled pair.
\end{proposition}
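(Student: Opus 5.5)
Tightness is automatic: all sets lie in the compact space \(\K(J_B)\), and the pair \((K_\lambda^\delta,R^\delta)\) lives in the product of compact spaces, so Prokhorov and Skorokhod give an a.s. convergent realization along any subsequence. We take the limit as \((K,R)\). The inclusion \(K_\lambda^\delta\subset R^\delta\) holds for each \(\delta\), since the catcher range is contained in the walk range and the same connector is used in both traces in \eqref{eq:blc-geometric-traces}. This inclusion passes to the limit because containment is closed in the Hausdorff product topology. The statement \(R\stackrel d=W[0,\tau_B]\) follows from Donsker's invariance principle together with convergence of exit times and exit points for the Lipschitz domain, via \eqref{eq:boundary-escape} as in \cref{sec:domain-extension}. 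Here the cell trace and the linear interpolation differ by \(O(\delta)\).

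Next come the geometric properties of \(K\). Each \(K_\lambda^\delta\) is connected, because \(S_\lambda^\delta\) is connected and cells of adjacent vertices touch (a clipped trace is \(O(\delta)\) from a connected set). It contains \(0\). Hausdorff limits of connected compacts are connected, so \(K\) is connected with \(0\in K\). Since \(K\subset R\) and a three-dimensional Brownian range has empty interior, \(\operatorname{int}K=\varnothing\).

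The main point is that \(K\cap\partial B\) is a singleton. First, \(K\cap\partial B\subset R\cap\partial B=\{W_{\tau_B}\}\) a.s., because a Brownian path stopped at first exit touches \(\partial B\) only at the exit point. This uses that \(\{W_t:t<\tau_B\}\subset B\), that the limit \(R\) is exactly the stopped range, and that the closure adds no boundary points. Second, \(K\cap\partial B\neq\varnothing\), since \(\zeta_\delta\in K_\lambda^\delta\) and \(\partial B\) is closed. Care is needed in this step: the discrete range may approach \(\partial B\) at several places, but these approaches are dominated by \(R^\delta\), whose limit meets \(\partial B\) only once. So the inclusion argument suffices, and no catcher-specific boundary estimate is required.

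For the multipath statement, the probe \(C_{\lambda,\times}^\delta\) is contained in the union of the interpolated recovered bridges, up to \(O(\delta)\) cells. By Proposition~2.8 of \cite{Cai26}, the recovered bridges are independent normalized random-walk bridges. Hence \cref{lem:green-path-limit} gives joint tightness and convergence to independent \(\Gamma_B^{a_i,b_i}\). Extracting a joint subsequence and using closedness of containment yields \eqref{eq:blc-probe-subset}, and \(C^0\Subset B\) holds by \cref{lem:green-path-limit}.

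For the component count, use \cref{lem:blc-online-witness}. Every vertex of \(S_{\lambda,\times}^\delta\) is joined inside \(S_{\lambda,\times}^\delta\) to some \(a_i^\delta\) by a loop-erased initial segment. The full loop erasure of each recovered path lies in the catcher and joins \(a_i^\delta\) to \(b_i^\delta\). Thus the discrete set is a union of \(r\) connected sets \(Y_i^\delta\), each containing \(a_i^\delta\) and \(b_i^\delta\). Pass to a further subsequence along which each \(Y_i^\delta\) converges to a connected compact \(Y_i\ni a_i,b_i\). Then \(C=\bigcup_i Y_i\) has at most \(r\) components, and each component contains some \(Y_i\), hence both endpoints of that pair.
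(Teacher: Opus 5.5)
Your proposal is correct and follows the route the paper intends; the paper omits its proof and points to Cai's Proposition~4.2. The shared steps are compactness of \(\K(J_B)\) with closed containment, empty interior and the single boundary contact inherited from \(K\subset R\stackrel d=W[0,\tau_B]\) together with \(\zeta_\delta\in K_\lambda^\delta\), containment of the probe in the recovered bridges via \cite[Proposition~2.8]{Cai26} and \cref{lem:green-path-limit}, and the component count from the initial-segment witnesses of \cref{lem:blc-online-witness}. One detail to add for connectedness is that the connector attaches to the catcher because its base point \(X^\delta_{\tau_\delta-1}\), the terminal vertex of the loop erasure, lies in \(S_\lambda^\delta\).
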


We omit the proof since it is similar to \cite[proof of Proposition~4.2]{Cai26} as well.

\subsection{Continuum recovery identities}

We now apply the same recovery transfer as for the LERW probes.
The local hittability hypothesis in \cref{cor:no-near-miss} was formulated
to allow targets meeting the boundary, so the radial case requires no
separate loop-selection proof here.
In the proposition below, the common recovery-transfer result gives both the stopped Brownian recovery law \eqref{eq:blc-continuum-recovery} and the continuum probe identities \eqref{eq:blc-continuum-probe} for loop-catchers.
\begin{proposition}
\label{prop:blc-continuum-identities}
Every radial subsequential limit \(K\) in
\cref{prop:blc-subsequential-geometry} satisfies, for every compact
\(E\Subset B\),
\begin{equation}\label{eq:blc-continuum-recovery}
 \mathbb E\!\left[
  \one_{\{K\cap E=\varnothing\}}e^{-\lambda m_B(K,E)}\right]
 =\mathbb P^0(W[0,\tau_B]\cap E=\varnothing).
\end{equation}
For every finite ordered list \((a_i,b_i)_{i\le r}\) with
\(a_i,b_i\in B\), \(a_i\ne b_i\), and repeated pairs allowed, there is a
subsequential multipath limit \(C=C_\lambda(a_1,b_1;\ldots;a_r,b_r)\)
satisfying, for every compact \(E\subset\overline B\),
\begin{equation}\label{eq:blc-continuum-probe}
 \mathbb E_C\!\left[
  \one_{\{C\cap E=\varnothing\}}e^{-\lambda m_B(C,E)}\right]
 =\prod_{i=1}^r\frac{G_{B\setminus E}(a_i,b_i)}{G_B(a_i,b_i)}.
\end{equation}
Here a numerator is zero if an endpoint lies in \(E\) or the two endpoints
belong to different components of \(B\setminus E\).
\end{proposition}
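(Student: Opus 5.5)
The plan is to apply \cref{cor:recovery-transfer} twice, once in the radial setup and once in the multipath setup. The hypotheses of \cref{prop:common-decoration-stability} are supplied by \cref{prop:blc-uniform-hittability} and \cref{cor:no-near-miss}.

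\emph{Radial identity.} Fix a convergent subsequence and its Skorokhod realization from \cref{prop:blc-subsequential-geometry}, so that \((K_\lambda^\delta,R^\delta)\to(K,R)\) almost surely.

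1. Take \(A^\delta=S_\lambda^\delta\). Its cell trace \(K_\lambda^\delta\) differs from the vertex support by \(O(\delta)\), and the connector has diameter \(O(\delta)\), so \(A^\delta\to K\) in Hausdorff distance.
2. Take \(R^\delta\) to be the range of the interior walk.
3. The hypothesis of \cref{cor:recovery-transfer} is exactly the finite identity \eqref{eq:blc-discrete-recovery-transform}.
4. The near-miss estimate \eqref{eq:common-close-miss} follows from \cref{cor:no-near-miss}. Its hittability input \eqref{eq:local-target-hittability} is \eqref{eq:blc-radial-hittability} at fixed \(\lambda\), with \(\alpha=\eta\). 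This holds for every \(U\Subset B\) and all small \(R_*\), so targets meeting \(\partial B\) are allowed.
5. \cref{cor:recovery-transfer} then gives \eqref{eq:common-recovery-transfer} for the limit \(R\). By \cref{prop:blc-subsequential-geometry}, \(R\stackrel d=W[0,\tau_B]\), and this is \eqref{eq:blc-continuum-recovery}.

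For \(\lambda=0\) there is nothing to transfer: the kernel is \(\one_{\{K\cap E=\varnothing\}}\), and \(K=R\) because \(S_0^\delta\) is the walk range. The statement assumes \(\lambda>0\) anyway.

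\emph{Multipath identity.} Fix the endpoint list and use the subsequence from \cref{prop:blc-subsequential-geometry} on which \(C_{\lambda,\times}^\delta\Rightarrow C\), realized almost surely.

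1. Take \(A^\delta=S_{\lambda,\times}^\delta\).
2. The near-miss hypothesis again comes from \cref{cor:no-near-miss}, now using the multipath version of \cref{prop:blc-uniform-hittability}.
3. The right-hand side of \eqref{eq:blc-discrete-probe-transform} is not an avoidance probability of a single set. By \cite[Proposition~2.8]{Cai26}, however, the recovered paths are independent normalized bridges. With \(R^\delta=\bigcup_i\supp\widehat\omega_i\), Fubini gives
\[
 \prod_i \mathcal G_{D^\delta\setminus F}(a_i^\delta,b_i^\delta)/\mathcal G_{D^\delta}(a_i^\delta,b_i^\delta)=\mathbb P(R^\delta\cap F=\varnothing).
\]
4. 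By \cref{lem:green-path-limit}, \(R^\delta\) converges in law to \(C^0=\bigcup_i\tr(\Gamma_B^{a_i,b_i})\).
5. \cref{cor:recovery-transfer} then yields \(\mathbb E[\one_{\{C\cap E=\varnothing\}}e^{-\lambda m_B(C,E)}]=\mathbb P(C^0\cap E=\varnothing)\).

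\emph{Identifying the right-hand side.} It remains to show
\[
 \mathbb P(\Gamma_B^{a,b}\cap E=\varnothing)=G_{B\setminus E}(a,b)/G_B(a,b).
\]
This follows from \eqref{eq:green-bridge}, because the killed heat kernel satisfies \(p_B(t,a,b)\mathbf P^{B,t}_{a,b}(\text{avoid }E)=p_{B\setminus E}(t,a,b)\) when \(E\) is closed. The convention of zero numerator covers endpoints in \(E\) or in different components. Independence of the bridges turns the single-bridge identity into the product.

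I expect the main obstacle to be boundary regularity of \(E\). For an irregular compact \(E\), the discrete avoidance probability of the lattice set \(E\cap D^\delta\) need not converge to the continuum one. This is why I would not pass the Green-function ratios to the limit directly. Instead I would route everything through \cref{cor:recovery-transfer}, whose proof handles general compact \(E\) via the Hausdorff convergence of the decorations. There the only input needed is that closure does not affect avoidance, which is Fact~\ref{lem:closed-decoration-avoidance}. A second point to check is that the Skorokhod coupling of \((K,R)\) is compatible with an independent loop soup; the joint version of \cref{prop:common-decoration-stability} covers this.
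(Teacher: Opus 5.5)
Your proposal is correct and takes essentially the paper's route: the paper likewise applies \cref{cor:recovery-transfer} to \eqref{eq:blc-discrete-recovery-transform} and \eqref{eq:blc-discrete-probe-transform}. As in your argument, the near-miss input comes from \cref{prop:blc-uniform-hittability} through \cref{cor:no-near-miss}, and the probe right-hand side is read as the avoidance probability of a union of independent normalized bridges, identified via \cref{lem:green-path-limit} and the killed heat-kernel relation, exactly as for \cref{prop:continuum-probe}.

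One minor simplification: the product of discrete Green ratios equals \(\mathbb P(R^\delta\cap F=\varnothing)\) for any family of independent normalized bridges. So you do not need \cite[Proposition~2.8]{Cai26} at that step.
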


We omit the proof, since \cref{cor:recovery-transfer} applies to
\eqref{eq:blc-discrete-recovery-transform}--\eqref{eq:blc-discrete-probe-transform}
by \cref{prop:blc-uniform-hittability,cor:no-near-miss,prop:blc-subsequential-geometry},
exactly as for \cref{prop:continuum-probe}.

\subsection{Uniqueness and the full scaling limit}

\begin{proof}[Proof of \cref{thm:blc-main}]
Fix \(0<\lambda\le1\). By
\cref{prop:blc-subsequential-geometry}, every mesh sequence has a
subsequence converging to a law on \(\mathfrak C_B\), and
\cref{prop:blc-continuum-identities} gives its recovery transform
\eqref{eq:blc-continuum-recovery}. The same proposition supplies
the probes in \eqref{eq:shared-probe-hypothesis} at intensity
\(\lambda\), thus \cref{lem:mixed-moments} identifies all
subsequential laws.
The recovery property \eqref{eq:blc-recovery-intro} is given by 
\eqref{eq:blc-continuum-recovery}.
\end{proof}

\appendix
\section{Inversion invariance of whole-space LEBM}
\label{sec:whole-space}
In this appendix, we sketch a proof of \cref{cor:inversion}.
Write \(B_R=B(0,R)\), let \(m_\infty\) be Brownian loop measure in
\(\R^3\), and let \(\mathcal L_\infty^1\) be its intensity-one soup.
For sets \(A,C\subset\widehat{\R^3}\), write \(m_\infty(A,C)\) for
the mass of loops meeting both \(A\cap\R^3\) and \(C\cap\R^3\).
For an infinite simple path \(K\) from \(0\) to \(\infty\), define
\(\mathcal D_\infty(K)\) as in \eqref{eq:decoration}, using
\(\mathcal L_\infty^1\) and taking closure in \(\widehat{\R^3}\).
Here ``infinite'' means that the path eventually leaves every bounded set;
equivalently, adjoining \(\infty\) gives a simple compact path in $\widehat{\R^3}$.
All soups below are independent of the paths they decorate.

The following lemma gives a whole-space version of Theorem \ref{thm:main}.
\begin{lemma}\label{lem:whole-space}
The law of \(K_\infty\) is the unique law on infinite curves from
\(0\) to \(\infty\) such that
\begin{equation}\label{eq:whole-space-recovery}
 \mathcal D_\infty(K)\stackrel d=W[0,\infty)\cup\{\infty\},
\end{equation}
where \(W\) is Brownian motion started at \(0\).
\end{lemma}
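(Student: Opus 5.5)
The plan has two halves: existence, meaning that $K_\infty$ satisfies \eqref{eq:whole-space-recovery}, and uniqueness, meaning that any law satisfying \eqref{eq:whole-space-recovery} coincides with that of $K_\infty$. Both halves reduce to the bounded-domain results by localizing to balls $B_R$ and letting $R\to\infty$.

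\emph{Existence.} By \cite[Theorem~10.1]{LiShiraishi}, $K_\infty$ is the limit of the LERW scaling limits $K^{B_R}$ in $B_R$. More precisely, for fixed $r$ the part of $K^{B_R}$ up to its first exit from $B_r$ converges in total variation to the corresponding part of $K_\infty$ as $R\to\infty$. This is the standard coupling of infinite LERW with LERW stopped at $\partial B_R$.

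By \cref{thm:main} applied in $B_R$, we have $\mathcal D_1^{B_R}(K^{B_R})\stackrel d=W[0,\tau_{B_R}]$. We test both sides against a compact $A\subset\R^3$ and use Fact~\ref{lem:closed-decoration-avoidance}, which gives
\[
\mathbb E\bigl[\one_{\{K^{B_R}\cap A=\varnothing\}}e^{-m_{B_R}(K^{B_R},A)}\bigr]=\mathbb P^0\bigl(W[0,\tau_{B_R}]\cap A=\varnothing\bigr).
\]
Now let $R\to\infty$. The right-hand side converges to $\mathbb P^0(W[0,\infty)\cap A=\varnothing)$. On the left, $m_{B_R}(K,A)\uparrow m_\infty(K,A)$ by monotonicity of the loop measure. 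The remaining task is to show that the far part of the path, $K^{B_R}\setminus B_r$, contributes only a small amount of mass $m_{B_R}(\cdot,A)$ when $r$ is large. Loops meeting both $A$ and the complement of $B_r$ have mass $O(\mathrm{cap}\cdot r^{-1})$-small, and this follows from the transience of three-dimensional Brownian motion. Together with the total-variation coupling of the initial parts, this gives the whole-space avoidance identity. The avoidance identities over all compact $A$ (with $\{\infty\}$ adjoined, which is harmless) then determine the law of the closed set $\mathcal D_\infty(K_\infty)$ by Choquet's theorem \cite{Molchanov}, which yields \eqref{eq:whole-space-recovery}.

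\emph{Uniqueness.} Let $K$ satisfy \eqref{eq:whole-space-recovery}. The plan is to rerun the Green-coordinate argument with $B$ replaced by $\R^3$, normalizing with $G_{\R^3\setminus K}(x,y)/G_{\R^3}(x,y)$. This has three ingredients.
\begin{enumerate}[label=\textnormal{(\roman*)},leftmargin=2.4em]
\item Local nonpolarity follows exactly as in \cref{lem:no-polar-subarc}, since that argument is local.
\item Injectivity of $\Phi$ holds as in \cref{prop:green-injective}.
\item We need continuum probes $C$ in $\R^3$ with $\mathbb E\,H_1^{\R^3}(C,A)=\prod_i G_{\R^3\setminus A}(x_i,y_i)/G_{\R^3}(x_i,y_i)$.
\end{enumerate}
For (iii) we take $R\to\infty$ in \cref{prop:continuum-probe} on $B_R$. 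The normalized bridges in $B_R$ converge to whole-space normalized bridges, which is possible because $G_{\R^3}$ is finite in dimension three. The hittability estimate \cref{prop:multipath-hittability} is local and uniform, so subsequential probes exist. The Green ratios converge by monotone convergence. Once the probes are in hand, \cref{lem:mixed-moments} and \cref{rem:borel-inverse} identify the law.

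\emph{Main obstacle.} I expect the hardest step to be the tail control at infinity, in both halves. One has to show that the loop masses $m_\infty(K,A)$ are finite and are approximated by $m_{B_R}$ uniformly in law. One also needs tightness of the whole-space probes, in the sense that the probe traces stay bounded with high probability. My first approach would be to use the bound $G(x,y)\lesssim|x-y|^{-1}$ and the hitting estimate $\mathbb P^z(\text{hit }B_r)\le r/|z|$ to control the mass of large loops linking $A$ to $\partial B_\rho$, uniformly in $\rho$.
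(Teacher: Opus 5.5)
\paragraph{Existence.} Your existence half has a hole in the localization. The total-variation coupling you invoke controls only the initial segment of $K^{B_R}$ up to its first exit from $B_r$. The crossing-loop bound controls only $K^{B_R}\setminus B_r$. Neither controls the pieces of the curve that re-enter $B_r$ after that first exit. The functional $\one_{\{K\cap A=\varnothing\}}e^{-m(K,A)}$ sees every such return, so you need the law of the full restriction $K^{(R)}\cap B_L$, not of an initial segment.

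The paper handles this with three scales $L\ll S\ll R$:
\begin{itemize}
\item The Li--Shiraishi comparison couples initial segments through scale $S$ with error $O(S/R)$.
\item After first leaving $B_S$, either LERW can come back to $B_L$ only if its underlying walk does, which costs $O(L/S)$.
\item Both bounds pass to the scaling limit by lower semicontinuity of total variation.
\end{itemize}
This gives $d_{\mathrm{TV}}\bigl(\mathrm{Law}(K^{(R)}\cap B_L),\mathrm{Law}(K_\infty\cap B_L)\bigr)\le c(S/R+L/S)$. The same return bound, applied at dyadic scales, is also what shows that $K_\infty$ eventually leaves every bounded set. That is, it shows $K_\infty$ lies in the candidate class of infinite simple curves at all, which your proposal never checks.

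\paragraph{Uniqueness.} The step that fails as written is (iii). Tightness does give subsequential limits $C$ of the $B_R$-probes $C_R$. It is in fact the easy part: $C_R$ lies in a union of $B_R$-bridges, and these are whole-space bridges conditioned to stay in $B_R$.

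The problem is that the probe identity does not pass to such a limit. The map $C\mapsto\one_{\{C\cap A=\varnothing\}}e^{-m(C,A)}$ is discontinuous in the Hausdorff topology. Controlling this discontinuity is exactly what \cref{cor:no-near-miss} and \cref{prop:common-decoration-stability} do, but only for discrete targets in a fixed domain. \Cref{prop:multipath-hittability} is likewise a discrete, fixed-domain statement. It gives no near-miss control for the continuum family $(C_R)_R$ as $R\to\infty$. You would need a new uniform-in-$R$ near-miss and decoration-stability argument, for instance a diagonal discrete argument in growing domains, and you have not supplied one.

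The paper avoids whole-space probes entirely:
\begin{itemize}
\item Apply the $B_R$ probe identity with obstacle $K\cap\overline B_R$, then use Fubini.
\item On $\{C_R\subset B_L\}$, replace $m_{B_R}(K,C_R)$ by $m_\infty(K,C_R)$ at cost at most $m_\infty(\overline B_L,B_R^c)\le cL/R$.
\item The recovery property makes $\mathbb E_{\mu_j}\bigl[\one_{\{K\cap A=\varnothing\}}e^{-m_\infty(K,A)}\bigr]$ the same for two candidate laws, for every compact $A$. Hence their $B_R$ mixed Green moments differ by at most $2cL/R+2\,\mathbb P(C_R\not\subset B_L)$.
\item Let $R\to\infty$, then $L\to\infty$. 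Monotone convergence $G_{B_R\setminus K}\uparrow G_{\R^3\setminus K}$ then yields equality of the whole-space mixed moments.
\end{itemize}
Your ingredients (i) and (ii), together with the moment-determinacy and reconstruction step, then finish the proof.
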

\begin{proof}
We record the exhaustion details so that no whole-space uniqueness
statement is being assumed. We use the crossing-loop estimate
\cite[Lemma~5.1]{JegoLupu26}:
\begin{equation}\label{eq:whole-space-loop-tail}
 m_\infty(\overline B_L,B_R^c)\le cL/R,\qquad R\ge2L.
\end{equation}

First we prove uniqueness using only the bounded-domain probes already
constructed in \cref{prop:continuum-probe}. Fix finitely many pairs of distinct endpoints \((x_i,y_i)\), with repetitions allowed, and take the
corresponding probe \(C_R\) in \(B_R\). For two candidate laws
\(\mu_1,\mu_2\), set
\[
 X^R_{x,y}(K)=\frac{G_{B_R\setminus K}(x,y)}{G_{B_R}(x,y)},
 \qquad
 F^\infty_j(A)=
 \mathbb E_{\mu_j}\bigl[\one_{\{K\cap A=\varnothing\}}
                         e^{-m_\infty(K,A)}\bigr].
\]
The closed-decoration avoidance argument of
Fact \ref{lem:closed-decoration-avoidance}, localized using
\eqref{eq:whole-space-loop-tail}, applies to every compact
\(A\subset\R^3\). Hence \eqref{eq:whole-space-recovery} implies
\(F^\infty_1(A)=F^\infty_2(A)\).
Apply \cref{prop:continuum-probe} with the compact obstacle
\(K\cap\overline B_R\) and use Fubini as in
\cref{lem:mixed-moments}. On \(\{C_R\subset B_L\}\), the replacement
of \(m_{B_R}(K,C_R)\) by \(m_\infty(K,C_R)\) changes the integrand by
at most \(cL/R\). Therefore
\[
 \left|\mathbb E_{\mu_1}\prod_iX^R_{x_i,y_i}(K)
       -\mathbb E_{\mu_2}\prod_iX^R_{x_i,y_i}(K)\right|
 \le 2cL/R+2\mathbb P(C_R\not\subset B_L).
\]
The probe is contained in the union of its normalized Brownian
bridges. Each bridge in \(B_R\) is a whole-space normalized bridge
conditioned to stay in \(B_R\). Such whole-space bridges have finite
lifetime and compact trace, since \(G_{\R^3}(x,y)<\infty\).
Consequently the last probability tends to zero when first
\(R\to\infty\) and then \(L\to\infty\).
Monotone convergence of Green functions and bounded convergence
now give equality of all mixed moments of
\(G_{\R^3\setminus K}(x,y)/G_{\R^3}(x,y)\).
The local nonpolarity argument in
\cref{lem:no-polar-subarc} and the Green-coordinate argument cited before
\cref{prop:green-injective} apply to infinite curves: the required
local finiteness of crossing loops follows from
\eqref{eq:whole-space-loop-tail}. The condition of local
nonpolarity is again Borel, using rational balls in \(\R^3\).
The moment-determinacy and measurable-reconstruction arguments of
\cref{lem:mixed-moments,rem:borel-inverse} therefore identify the
law of the infinite curve. This proves uniqueness.

For existence, let \(K^{(R)}\) have the radial LERW scaling-limit law
in \(B_R\). The finite/infinite LERW comparison
\cite[(10.2), proof of Theorem~10.1]{LiShiraishi}
couples initial segments through scale \(S\) with error \(O(S/R)\).
After first leaving that scale, either LERW can return to \(B_L\)
only if its underlying random walk does so; the standard
three-dimensional return bound is \(O(L/S)\).
By \cite[Proposition~10.2]{LiShiraishi} and lower semicontinuity of
total variation under weak convergence, the comparison passes to the
limiting stopped curves. The return estimate passes to the limit first
with slightly enlarged observation balls. Outside these return events,
the full traces in \(B_L\) agree with their stopped traces. Thus
\[
 d_{\rm TV}\bigl(\mathop{\rm Law}(K^{(R)}\cap B_L),
                 \mathop{\rm Law}(K_\infty\cap B_L)\bigr)
 \le c(S/R+L/S),\qquad L\ll S\ll R.
\]
The comparison in the cited source uses cubes for the intermediate
scale; enclosing balls give the displayed bound with adjusted constants.
In particular the local trace laws converge in total variation.
The same comparison makes every stopped initial segment of
\(K_\infty\) simple. Applying the return bound at increasing dyadic
scales shows that it eventually leaves every bounded set. Thus adjoining
\(\infty\) gives a simple compact path in $\widehat{\R^3}$, as required by the candidate class.
Couple these traces on \(B_S\), independently of one whole-space soup.
On agreement there, their decorations inside \(B_L\), \(L<S\),
can differ only through a loop meeting both \(B_L\) and \(B_S^c\);
its probability is at most \(cL/S\) by
\eqref{eq:whole-space-loop-tail}.
But the \(B_R\)-decoration of \(K^{(R)}\) has the law of
\(W[0,\tau_{B_R}]\), by \cref{thm:main} and dilation.
Letting \(R\to\infty\) and then \(S\to\infty\) proves
\eqref{eq:whole-space-recovery} on every bounded observation set.
The endpoint at infinity is deterministic, completing the proof.
\end{proof}

\begin{proof}[Proof of \cref{cor:inversion}]
The Brownian inversion and time-reversal identities
\cite{Yor85}, equivalently
\cite[Lemmas~3.3 and~3.5]{JegoLupu26}, give
\begin{equation}\label{eq:inverted-brownian-range}
 \iota\bigl(W[0,\infty)\cup\{\infty\}\bigr)
 \stackrel d=W[0,\infty)\cup\{\infty\}.
\end{equation}
For clarity concerning the endpoints, apply those identities first
to Brownian motion from \(x\ne0\) to infinity. Its inverted and
reversed trace is a normalized Brownian bridge from \(0\) to
\(\iota(x)\). Writing \(y=\iota(x)\), the bridge stopped upon leaving
\(B_S\) has density
\[
 \frac{G_{\R^3}(W_{\tau_{B_S}},y)}{G_{\R^3}(0,y)}
 =1+O(S/|y|)
\]
relative to stopped Brownian motion, uniformly for \(|y|>2S\).
The same Doob-transform formula bounds its subsequent return probability
to \(B_L\) by \(CL/S\). Thus its compactified trace converges to the
Brownian trace as \(|y|\to\infty\). On the other side, Brownian traces
started at \(x\to0\) converge by translation, and inversion is continuous
on the compactification. This proves
\eqref{eq:inverted-brownian-range}.

By \cite[Lemma~4.1]{JegoLupu26},
\(\iota(\mathcal L_\infty^1)\) is again an intensity-one Brownian
loop soup and remains independent of \(\iota(K_\infty)\).
(Their inversion has an additional minus sign, removed by reflection
invariance.) Since \(\iota\) is a homeomorphism of
\(\widehat{\R^3}\), it preserves intersections and commutes with
closure. Thus
\[
 \mathcal D_\infty\bigl(\iota(K_\infty)\bigr)
 \stackrel d=\iota\bigl(\mathcal D_\infty(K_\infty)\bigr)
 \stackrel d=W[0,\infty)\cup\{\infty\}.
\]
The inverted path, with reversed orientation, is an infinite simple curve
from \(0\) to \(\infty\). Apply \cref{lem:whole-space}.
\end{proof}

\begin{remark}[Bounded-domain versions]\label{rem:bdeddomain}
Let $D\subset\mathbb R^3$ be a bounded connected Lipschitz domain.
All laws below concern unparametrized traces, and all decorations use an
independent intensity-one Brownian loop soup in $D$.

\smallskip
\noindent\emph{Interior point-to-point paths.}
For distinct $a,b\in D$, let $K_D^{a,b}$ denote the scaling limit of the
loop erasure of the normalized random-walk bridge from $a^\delta$ to
$b^\delta$ in $D^\delta$. It is a simple curve compactly contained in $D$,
and its law is uniquely characterized, among such curves with endpoints
$a,b$, by
\[
  \mathcal D_1^D(K_D^{a,b})
  \overset{d}=\operatorname{tr}(\Gamma_D^{a,b}).
\]

\smallskip
\noindent\emph{Exit excursions.}
Let $K_D^a$ have the radial law of \cref{thm:main}, with starting point $a$,
and write $\zeta(K)$ for the unique point of $K\cap\partial D$.
For a continuous $f:\partial D\to[0,\infty)$ with
\[
  Z_D^{a,f}:=\mathbb E^a[f(W_{\tau_D})]>0,
\]
define the law of $K_D^{a;f}$ by
\[
 \frac{d\operatorname{Law}(K_D^{a;f})}
      {d\operatorname{Law}(K_D^a)}(K)
 =\frac{f(\zeta(K))}{Z_D^{a,f}}.
\]
Its decoration has the law of the stopped Brownian range weighted by
$f(W_{\tau_D})/Z_D^{a,f}$, and this recovery property uniquely
characterizes its law among simple curves from $a$ with a single boundary
contact. Equivalently, it is the scaling limit obtained by weighting the
discrete exit excursion by $f$ of its exit point. 
The case
$f\equiv1$ is the unconditioned exit excursion. Disintegration by the
common exit point also gives the fixed-exit-point characterization for
harmonic-measure-almost every $z\in\partial D$; it does not, by itself,
assert convergence for arbitrary lattice approximations of a prescribed
boundary point.

\smallskip
\noindent\emph{Inversion covariance.}
Suppose $0\notin\overline D$, set $\iota(x)=x/|x|^2$, and put
$D'=\iota(D)$. Then $D'$ is again a bounded connected Lipschitz domain,
and
\[
 \iota(K_D^{a,b})\overset{d}=
 K_{D'}^{\iota(a),\iota(b)}.
\]
For exit excursions the corresponding statement is
\[
 \iota(K_D^{a;f})\overset{d}=
 K_{D'}^{\iota(a);f^\iota},
 \qquad
 f^\iota(z)=\frac{f(\iota(z))}{|z|},
 \quad z\in\partial D'.
\]
Multiplying the exit weight by a positive constant does not change the
normalized law. For the conditional laws with a fixed exit point, this
endpoint factor cancels, giving
\[
 \iota(K_D^{a,z})\overset{d}=
 K_{D'}^{\iota(a),\iota(z)}
\]
for harmonic-measure-almost every $z$.

\smallskip
\noindent\emph{Justification.}
For interior endpoints, take one pair in \cref{prop:continuum-probe}.
Compact containment and recovery along
subsequences follow from 
\cref{lem:green-path-limit,prop:multipath-hittability,prop:common-decoration-stability}.
The stopped change of measure \eqref{eq:stopped-Doob-exact}, localized away
from the terminal point and combined with the reversal identity
\eqref{eq:LERW-reversal-law}, transfers the quasi-loop argument of
\cite[Section~6]{SS18}; the return estimates at the two fixed interior
endpoints allow us to remove the restrictions near those
endpoints. Thus these subsequential
limits are simple paths. The local-nonpolarity and Green function moment arguments of
\cref{lem:no-polar-subarc,lem:mixed-moments,prop:green-injective} apply with
the bridge range in place of the stopped Brownian range: the probes are
unchanged, and only the prescribed right-hand side of the recovery
transform changes. They give uniqueness and hence convergence of the full
sequence.

For exit excursions, direct decoration leaves the unique boundary point
unchanged. Weighting by $f$ therefore preserves the recovery property,
with the same weight on the Brownian exit point; continuity of $f$ also
passes this weighting through the joint discrete limit. The same
identification argument gives uniqueness, and disintegration gives the
almost-everywhere conditional statement.

Finally, the inversion identities of \cite[Lemmas~3.5 and~4.1]{JegoLupu26},
restricted to $D$, transport normalized bridges and the independent loop
soup. For a stopped Brownian path, the transformed law is instead the Doob
$h$-transform in $D'$, where $h(x)=|x|^{-1}$. Its density relative to
Brownian motion started at $\iota(a)$ and stopped on exiting $D'$ is
$h(W_{\tau_{D'}})/h(\iota(a))$. This gives exactly the exit weight
$f^\iota$ above. Inversion preserves intersections and closure, so
transporting the recovery identities and applying uniqueness proves the
covariance statements.
\end{remark}

\end{document}